\theoremstyle{plain}
\newtheorem*{question*}{\protect\questionname}
\theoremstyle{plain}
\newtheorem{example}{\protect\examplename}
\theoremstyle{plain}
\newtheorem{thm}{\protect\theoremname}
\theoremstyle{plain}
\theoremstyle{plain}
\newtheorem{assumption}[thm]{\protect\assumptionname}
\theoremstyle{definition}
\newtheorem{defn}[thm]{\protect\definitionname}
\theoremstyle{definition}
\theoremstyle{remark}
\newtheorem{rem}[thm]{\protect\remarkname}
\theoremstyle{plain}
\newtheorem{lem}[thm]{\protect\lemmaname}
\DeclareMathOperator*{\argmin}{arg\,min}
\DeclareMathOperator*{\sqmin}{\square\min}
\DeclareMathOperator*{\sqinf}{\square\inf}
\DeclareMathOperator*{\sqsup}{\square\sup}
\providecommand{\assumptionname}{Assumption}
\providecommand{\examplename}{Example}
\providecommand{\definitionname}{Definition}
\providecommand{\corollaryname}{Corollary}
\providecommand{\lemmaname}{Lemma}
\providecommand{\questionname}{Question}
\providecommand{\remarkname}{Remark}
\providecommand{\theoremname}{Theorem}
\renewcommand{\Re}{\mathbb{R}}
\renewcommand{\Re}{\mathbb{R}}
\newcommand{\pr}[1]{\mathds{P}\left\{#1\right\}}
\title{Dynamic Capital Requirements for Markov Decision Processes}
\author{William B. Haskell, Abhishek Gupta, and Shiping Shao}
\date{\today}
\begin{document}

\maketitle
\begin{abstract}
We build on the theory of capital requirements (CRs) to create a new framework for modeling dynamic risk preferences.
The key question is how to evaluate the risk of a payoff stream sequentially as new information is revealed.
In our model, we associate each payoff stream with a disbursement strategy and a premium schedule to form a triple of stochastic processes.
We characterize risk preferences in terms of a single set that we call the {\it risk frontier} which characterizes acceptable triples.
We then propose the generalized capital requirement (GCR) which evaluates the risk of a payoff stream by minimizing the premium schedule over acceptable triples.
We apply this model to a risk-aware decision maker (DM) who controls a Markov decision process (MDP) and wants to find a policy to minimize the GCR of its payoff stream.
The resulting GCR-MDP recovers many well-known risk-aware MDPs as special cases.
To make this approach computationally viable, we obtain the temporal decomposition of the GCR in terms of the risk frontier. Then, we connect the temporal decomposition with the notion of an information state to compactly capture the dependence of DM's risk preferences on the problem history, where augmented dynamic programming can be used to compute an optimal policy.
We report numerical experiments for the GCR-minimizing newsvendor.
\end{abstract}



\section{Introduction}
Markov decision processes (MDPs) are a ubiquitous model for dynamic optimization \cite{bertsekas2012dynamic,puterman2014markov}. We consider a decision maker (DM) who controls an underlying discrete time finite horizon MDP which generates a stochastic payoff stream.
The classical MDP model is risk-neutral and maximizes expected total payoff. However, in many applications DM may have performance targets or reliability requirements that go beyond expected performance. It is essential to incorporate these additional requirements into DM's dynamic optimization model.

There are two major challenges to doing risk-aware dynamic optimization. First, characterizing DM's risk preferences in the dynamic setting is a much more complex task than in the static setting. We only need to model the risk of a single payoff in the static setting. In the dynamic setting, the payoffs are sequential so both the timing and the amounts matter. We need to model the evolution of DM's risk preferences and their dependence on the problem history, while also ensuring that they are consistent over time. Second, it is more difficult numerically to solve risk-aware MDPs compared to their static counterparts. The well known curse of dimensionality enters for MDPs with large state spaces, and risk-aware MDPs with history dependence only exacerbate this difficulty.

Much work on risk-aware MDPs focuses on nested risk measures (also called Markov/iterated risk measures), which are originally developed in \cite{ruszczynski2010risk,ruszczynski2014erratum}. This class of dynamic risk measures leads to dynamic programming (DP) decompositions on the same state space as the original risk-neutral problem. To use nested risk measures, DM's preferences must be elicited to identify an appropriate one-step risk measure for all time periods and states. These individual one-step risk measures are then composed to form a dynamic risk measure on the payoff stream.
However, in \cite{pflug2016time,pflug2016timea} the authors argue that nested risk measures are hard to elicit/interpret/justify and that they are not what decision makers understand as dynamic risk or acceptability. In addition, in \cite{li2017quantile} the authors demonstrate that nested risk measures can lead to overly conservative risk assessments.

Several alternative risk-aware MDP models have been proposed that optimize a risk measure of the cumulative payoff (e.g., expected utility \cite{bauerle2013more}, conditional value-at-risk \cite{chow2015risk,pflug2016time}, quantile risk measures \cite{li2017quantile}, etc.). These works all develop a DP decomposition on an augmented state space to compute the optimal policy. Yet, these models reflect different decision-theoretic paradigms, and there is no universally established way for DM to choose among them.

The theory of capital requirements (CRs) offers another way to model dynamic risk preferences, and it unifies the approaches just mentioned.
CRs (see \cite{frittelli2006risk}) build on monetary risk measures by allowing more general financial apparatus and financial premiums (than fixed cash payments).
The CR framework also facilitates preference elicitation for dynamic optimization, where DM can directly articulating his performance targets for the payoff stream. Then, he can evaluate its risk by finding the lowest cost disbursement strategy for which the payoff stream meets the targets.

Our goal in the present paper is to extend the existing CR framework to `generalized capital requirements' (GCRs), which cover a broader expression of dynamic risk preferences.
We additionally want to deploy GCRs in MDPs to create the class of GCR-MDPs and to obtain their DP decomposition.

\subsection{Motivating Examples}

It is natural to combine MDPs and CRs to express DM's risk preferences in many dynamic optimization problems.

\textbf{Manufacturing.} A plant manager must allocate manpower and resources to maximize production. The plant earns revenue from daily sales, and the manager is given cash-flow targets.
He must determine an initial investment in machinery, production capacity, and raw materials.
Given a fixed payoff stream, the financial premium is the amount of initial capital needed at the beginning of the planning horizon to achieve the cash-flow targets. The manager's problem is to dynamically optimize the plant's activities to minimize the amount of initial capital needed. 

\textbf{Insurance.} An insurance firm's daily profit depends on which claims are settled and how many new customers are acquired.
The firm must keep sufficient working capital in reserve to cover settled claims, where the reserve threshold is determined by regulators. It can sell assets from its portfolio of long-term investments to supplement on-hand cash.
The financial premium consists of the capital gains taxes and other opportunity costs from sale of long-term investments. 

\textbf{Consumption.} An individual makes consumption and investment decisions to meet their savings targets. At the beginning of every week, the individual decides which work contracts to accept and how many hours to spend to generate (possibly stochastic) income.
Then, the individual must determine how much to consume and withdraw/invest in savings.
The financial premium consists of income taxes and insurance against consumption shortfall.

\textbf{Renewable energy.} A renewable energy firm manages a portfolio with multiple types of storage assets.
The company's income is stochastic since it depends on the market price of electricity, output from renewables, and demand.
Each day, these uncertainties are realized and the firm decides how much to deposit/extract from storage assets.
The firm also has reserve requirements for meeting demand surges. 
The financial premium is the amount of initial capital injected at the beginning of the planning horizon for investment in new generators and storage assets.

\subsection{Contributions}
We make the following contributions in the present paper.

\textbf{New family of GCRs.} This work is built on CRs, which are developed in \cite{scandolo2004models,frittelli2006risk} as a tool for evaluating the risk of payoff streams.
We generalize the CRs developed in \cite{frittelli2006risk} by combining the three CR ingredients (acceptance set, set of financial strategies, and financial cost function) into a single set that we call the {\it risk frontier}.
The risk frontier fully characterizes acceptable triples of payoffs, disbursements, and premiums.
The GCR then evaluates risk by minimizing the premium over acceptable triples.
This setup generalizes the model in \cite{frittelli2006risk} while preserving its nice features.

\textbf{Time consistency.} We identify sufficient conditions for the time consistency of GCRs.
To obtain a temporal decomposition, we apply lattice theory to justify the interchange of the order of minimization in GCRs to obtain their recursive form.

\textbf{Risk-aware information states.} Dynamic risk preferences can potentially depend on the full problem history, leading to intractable models.
The information state framework in \cite{subramanian2022approximate} (for risk-neutral MDPs) can be adapted to compactly model history-dependent risk preferences in our setting.
The idea of an information state also naturally emerges in risk-aware MDPs. In \cite{bauerle2013more}, the information state is the cumulative wealth; in \cite{chow2015risk,li2017quantile}, it is the risk level.
We connect the information state concept with our new family of GCRs, and use it to compactly represent DM's dynamic risk preferences and their evolution.

\textbf{New family of GCR-MDPs.} We create a new class of risk-aware MDPs that we call {\it GCR-MDPs}, where the objective is to find a policy to minimize a GCR evaluated on the payoff stream.
We obtain the DP decomposition of GCR-MDPs which leads to a more general form of the classical DP recursion.
In GCR-MDPs, each cost-to-go problem appears in epigraph form where the objective is to minimize the current period premium subject to constraints on the current triple of income, disbursement, and premium.
The dependence on future periods is fully captured by the form of these constraints.

\textbf{Unifying framework.} We recast several well known risk-aware MDPs as GCR-MDPs (e.g., nested risk measures, expected utility, conditional value-at-risk, etc.).
The GCR-MDP framework highlights their essential common structure by demonstrating that they all follow from appropriate construction of the risk frontier and information state.
Our framework also streamlines the design of risk-aware MDPs, and we develop new GCR-MDPs based on wealth-dependent preferences and target shortfall.

\subsection{Previous Work}
The evaluation of risk starts with an axiomatic foundation. For instance, \cite{morgenstern1953theory} develop the axiomatic foundations for VNM expected utility theory based on: (i) independence; and (ii) diminishing marginal utility (concavity).
\cite{artzner1999coherent} provides an axiomatic foundation for coherent risk measures, based on: (i) monotonicity; (ii) translation invariance; (iii) convexity; and (iv) positive homogeneity.

\textbf{Dynamic risk measures.} In the dynamic setting, information is revealed sequentially which gives opportunities to update risk evaluations.
\cite{ruszczynski2006conditional} develops the theory of conditional risk mappings.
Dynamic risk measures have been developed for terminal payoffs and for various value processes \cite{artzner2007coherent,cheridito2006dynamic,jobert2008valuations}. \cite{bielecki2014dynamic} develops the class of dynamic acceptability indices which account for both the timing and the amount of the payments.
\cite{detlefsen2005conditional} develops dynamic risk measures for terminal payoffs.

The property of time consistency is a central issue in dynamic risk evaluation. A dynamic risk measure is time-inconsistent if, at some time, it says one prospect is less risky than another, but gives the opposite prediction based on the conditional distributions at a later time.
For example, re-applying the same static risk measure to the tail cumulative payoff in each period may give time-inconsistent risk evaluations.
The concept of time consistency has been formalized in several ways (see, e.g., \cite{artzner2007coherent,Shapiro:2009fk,carpentier2012dynamic}).
\cite{tutsch2008update} examines update rules for convex risk measures and studies time consistency with respect to test sets (e.g., the set of all fixed cash payments or the set of all financial instruments).
\cite{Shapiro:2009fk} develops a time consistency concept in terms of the overall optimal policy (determined in the initial period) for each of the tail problems.
\cite{bielecki2018unified} gives a unified study of many different notions of time consistency for both terminal payoffs and processes. This work is based on the idea of an update rule which recovers different versions of time consistency found in the literature.

The existing risk-aware MDP models can be roughly categorized as: time-consistent or time-inconsistent.
Time-consistent formulations emphasize a temporal decomposition, where the optimal policy is computed by DP.
Time-inconsistent formulations emphasize `pre-committed' policies, where it is not possible to deviate from the policy once it is engaged.
In \cite{pflug2016timea}, it is argued that time-inconsistent formulations are easier to specify. Further, they more naturally align with a decision maker's sense of the acceptability of the final outcome and enjoy greater interpretability.

\textbf{Nested risk measures.} \cite{ruszczynski2006conditional} develops a foundation for conditional risk functions, and give a corresponding DP decomposition.
\cite{ruszczynski2010risk} introduces the family of nested risk measures and defines a new formulation for risk-averse MDPs. In addition, \cite{ruszczynski2010risk} derives a DP decomposition and value iteration method for both finite and infinite horizon problems.
There is a close connection between nested risk measures and robust DP that is elaborated in \cite{osogami2011iterated}.
Follow up work develops several numerical methods for MDPs with Markov risk measures.
\cite{tamar2016sequential} considers simulation-based algorithms for risk-aware MDPs, they use an actor-critic type algorithm.
\cite{jiang2018risk} develops additional simulation-based numerical methods for approximate DP for MDPs with nested risk measures.

\textbf{Expected utility.} Expected utility models \cite{howard1972risk,kreps1977decisionb,bauerle2013more} seek to maximize the expected utility of terminal wealth and naturally depend on DM's wealth level.
Early work on risk aversion in MDPs focused on separable utility functions (see, e.g., \cite{chung1987discounted}) where there is a separate utility function for each period.
\cite{bauerle2013more} develops a DP decomposition for expected utility maximizing MDPs, where wealth becomes the augmenting state variable.
\cite{li2022time} solves the dynamic portfolio problem with expected utility objective, where the risk aversion is allowed to be regime dependent (e.g., whether we are in a bull or bear market).

\textbf{Mean-risk models.} The mean-variance criteria is a natural generalization of the risk-neutral expectation and is applied to MDPs in \cite{Filar1989,Sobel1994,delage2010percentile,mannor2013algorithmic,shi2017better}.
\cite{basak2010dynamic} derives a time-consistent solution for the dynamic mean-variance portfolio problem in continuous time.
\cite{bjork2014mean,bjork2014theory} consider a class of mean-variance portfolio problems with wealth-dependent risk aversion.
This problem is solved through a game-theoretic formulation to obtain a time-consistent policy.
\cite{kovavcova2021time} develop a set-valued Bellman principle for a dynamic mean-risk portfolio problem.
\cite{he2022mean} studies the mean-variance portfolio in continuous time with dynamic targets for the terminal wealth, and finds the optimal policy through an equilibrium strategy.

\textbf{Conditional value-at-risk.} Minimization of conditional value-at-risk (CVaR) of total costs/rewards is considered in \cite{bauerle2011markov,chow2015risk}. \cite{bauerle2011markov} is based on augmenting the state space with the cumulative wealth, and then solving a sequence of DP problems based on optimizing the threshold parameter in the variational representation of CVaR.
The notion of extended conditional risk functional is introduced in \cite{pflug2016time} which allows for a temporal decomposition of an initial (static) risk functional.
In particular, \cite{pflug2016time} derives a time-consistent temporal decomposition for conditional value-at-risk (CVaR) which depends on the random risk level.
\cite{chow2015risk} is based on this result from \cite{pflug2016time}, and gives a DP decomposition for CVaR where the augmenting state variable is the risk level.

\textbf{General risk measures.} \cite{pflug2016timea} considers the temporal decomposition of distortion risk measures and shows that the DP principle applies.
\cite{bauerle2020minimizing} studies minimization of a spectral risk measure of the total discounted cost. Similar to \cite{bauerle2011markov}, the solution method in \cite{bauerle2020minimizing} is based on optimizing over the risk spectra, and then solving a corresponding DP problem.
In \cite{haskell2015convex}, the convex analytic approach is developed to solve time-inconsistent MDPs with a general risk objective. In line with related works, \cite{haskell2015convex} is based on augmenting the state space with the cumulative wealth. Then, occupation measures are defined on the augmented state space which account for the distribution of terminal wealth.
\cite{tamar2016sequential} consider a policy gradient algorithm for general time-inconsistent static risk measures, based on the expression of the risk envelope.

\subsection{Organization}
This paper is organized as follows.
In Section~\ref{sec:preliminaries}, we review preliminaries on dynamic risk measures.
In Section~\ref{sec:gcr}, we present GCRs and then Section~\ref{sec:tempCR} investigates their temporal decomposition and the role of an information state.
In Section~\ref{sec:problem} we discuss GCR-MDPs, derive their DP decomposition, and connect it with the information state.
Section~\ref{sec:applications} casts several dynamic risk measures as GCRs.
We explore GCR-MDP models for the dynamic newsvendor in simulations in Section~\ref{sec:simulations}, and we conclude the paper in Section~\ref{sec:conclusion}. All proofs are in the Appendices.

\section{Preliminaries}
\label{sec:preliminaries}

\subsection{Notation}
For any integer $n \geq 0$, we let $[n] \triangleq \{0, 1, 2, \ldots, n\}$ denote the running index starting from zero.
For integers $n_1 \leq n_2$, we let $[n_1, n_2] \triangleq \{n_1, n_1 + 1, \ldots, n_2\}$ denote the running index from $n_1$ to $n_2$.
Let $\mathcal U$ be an arbitrary set in a vector space equipped with a partial order $\succeq$. Then $\mathcal U$ is monotone if $X \in \mathcal U$ and $Y \succeq X$ imply $Y \in \mathcal U$.
We take the optimal value of an infeasible minimization problem to be $+\infty$ and the optimal value of an infeasible maximization problem to be $-\infty$.


For a finite set $\mathbb D$, we let $\mathcal P(\mathbb D)$ denote the set of probability measures over $\mathbb D$.
For real-valued random variables $X$ and $Y$ defined on the same probability space $(\Omega, \mathscr{F}, \mathbb{P})$, $X \geq Y$ means $X(\omega) \geq Y(\omega)$ for $\mathbb P-$almost all $\omega \in \Omega$.
For two jointly distributed random variables $X$ and $Y$, $[X|y]$ denotes the conditional random variable $X$ given $Y = y$. 





\subsection{Payoff Streams}

We consider the risk of payoff streams over a finite planning horizon $[T] \triangleq \{0, 1, \ldots, T\}$ for $0 \leq T < \infty$.
We let $\Xi$ be the space of uncertain data, and we let $\{\xi_t\}_{t \in [T]}$ be the uncertain data process where $\xi_t \in \Xi$ is the realization in period $t \in [T]$.

We suppose the system has an evolving state which is defined on the state space $\mathbb{S}$, where $s_t \in \mathbb{S}$ is the state at the beginning of period $t$.
The system dynamic is given by $f_t : \mathbb{S} \times \Xi \rightarrow \mathbb{S}$ for $t \in [T-1]$, and state transitions follow $s_{t+1} = f_t(s_t,\xi_t)$ for all $t \in [T-1]$ (where the terminal state is $s_T$).

\begin{assumption}
\label{assu:preliminaries}
(i) $\Xi$ is finite.

(ii) $\mathbb{S}$ is finite.

(iii) The initial state $s_{0}\in\mathbb{S}$ is fixed.
\end{assumption}
\noindent
Under Assumption~\ref{assu:preliminaries}, we take the underlying sample space $\Omega = \mathbb S \times \Xi^{T+1}$ to be the set of all trajectories $\omega = (s_0, \xi_0, \ldots, \xi_{T})$ of initial state and data realizations.
Since we have a system dynamic $\{f_t\}_{t \in [T]}$, $(s_1, \ldots, s_T)$ is completely determined on $\omega$ given $s_0$.
Let $h_t = (s_0, \xi_0, \ldots, \xi_{t-1})$ be the history up to the beginning of period $t \in [T]$, and let $h_{T+1} = (s_0, \xi_0, \ldots, \xi_{T})$ be the full problem history (which is observable at the beginning of period $T+1$).
Then, let $\mathbb{H}_t$ be the set of all histories up to period $t \in [T+1]$.

Under Assumption~\ref{assu:preliminaries}, $\Omega$ is finite and we may let $\mathscr{F}$ be the standard Borel $\sigma$-algebra on $\Omega$.
We then let $\mathbb P$ be a probability distribution on $(\Omega, \mathscr{F})$ to obtain the probability space $(\Omega, \mathscr{F}, \mathbb{P})$.
Let $\mathscr{F}_t = \sigma(h_t)$ be the information available at the beginning of period $t \in [T]$, and let $\mathscr{F}_{T+1} = \sigma(h_{T+1}) = \mathscr{F}$ for the terminal period $T+1$.
We define $\mathcal L_t \triangleq \mathcal L^{\infty}(\Omega, \mathscr{F}_t, \mathbb P)$ for all $t \in [T+1]$ to be the space of essentially bounded $\mathscr{F}_t-$measurable random variables on $(\Omega, \mathscr{F}, \mathbb{P})$.
Then we define $\mathcal L_{1:T+1} \triangleq \prod_{t=1}^{T+1} \mathcal L_t$ to be the set of essentially bounded stochastic payoff streams.
Let $X_{0:T} = (X_t)_{t = 0}^T \in \mathcal L_{1:T+1}$ be a particular payoff stream where each $X_t \in \mathcal L_{t+1}$ is the payoff in period $t \in [T]$ (which depends on $\xi_t$ and is observed at the end of period $t$).

Since $X_{0:T}$ is dynamic, we face a sequence of risk evaluations of the tail payoff streams in each period after new information becomes available.
For any $t \in [1, T+1]$, we define $\mathcal L_{t+1:T+1} \triangleq \prod_{k=t+1}^{T+1} \mathcal L_t$ to be the set of essentially bounded tail payoff streams on $[t, T]$.
We then let $X_{t:T} = (X_t, \ldots, X_T) \in \mathcal L_{t+1:T+1}$ denote a tail payoff stream starting from period $t \in [T]$.
Dynamic risk measures formalize the idea of a sequence of risk evaluations of the tails of a payoff stream.

\begin{defn}
\label{defn:dynamic_risk}
A dynamic risk measure on $\mathcal L_{1:T+1}$ is a sequence of mappings $\{\varrho_{t:T}\}_{t \in [T]}$, where $\varrho_{t:T} : \mathcal L_{t+1:T+1} \rightarrow \mathcal L_t$ evaluates the risk of the tail payoff stream $X_{t:T}$ for all $t \in [T]$.
\end{defn}
\noindent
We note that $\varrho_{t:T}(X_{t:T})$ is an $\mathscr{F}_t-$measurable random variable, where $[\varrho_{t:T}(X_{t:T})](h_t)$ denotes the period $t$ risk evaluation of $X_{t:T}$ on history $h_t \in \mathbb{H}_t$.
Next we recall some key properties of dynamic risk measures.

\begin{defn}
A dynamic risk measure $\{\varrho_{t:T}\}_{t \in [T]}$ is:

(i) Monotone if $X_{t:T} \geq Y_{t:T}$ implies $\varrho_{t:T}(X_{t:T}) \leq \varrho_{t:T}(Y_{t:T})$ for all $X_{t:T},\, Y_{t:T} \in \mathcal L_{t+1:T+1}$ and $t \in [T]$.

(ii) Convex if $\varrho_{t:T}(\lambda\, X_{t:T} + (1 - \lambda)Y_{t:T}) \geq \lambda\,\varrho_{t:T}(X_{t:T}) + (1 - \lambda)\varrho_{t:T}(Y_{t:T})$ for all $X_{t:T},\, Y_{t:T} \in \mathcal L_{t+1:T+1}$, $\lambda \in [0, 1]$, and $t \in [T]$.

(iii) Positively homogeneous if $\varrho_{t:T}(\alpha_t X_{t:T}) = \alpha_t \varrho_{t:T}(X_{t:T})$ for all $X_{t:T} \in \mathcal L_{t+1:T+1}$ and $\alpha_t \in \mathcal L_t$ with $\alpha_t \geq 0$, and $t \in [T]$.
\end{defn}
\noindent
Several definitions of time consistency of $\{\varrho_{t:T}\}_{t \in [T]}$ have been proposed, the following definition from \cite{ruszczynski2010risk} is referred to as `strong time consistency'.
\begin{defn}
\label{defn:time_consistency}
A dynamic risk measure $\{\varrho_{t:T}\}_{t \in [T]}$ is time-consistent if for all $0 \leq t_1 < t_2 \leq T$ and all sequences $X_{t_1:T}, Y_{t_1:T} \in \mathcal L_{t_1:T}$, the conditions $X_k = Y_k$ for $k \in [t_1, t_2 - 1]$ and $\varrho_{t_2:T}(X_{t_2:T}) \leq \varrho_{t_2:T}(Y_{t_2:T})$ imply $\varrho_{t_1:T}(X_{t_1:T}) \leq \varrho_{t_1:T}(Y_{t_1:T})$.
\end{defn}
\noindent
In other words, if two processes $X_{t_1:T}$ and $Y_{t_1:T}$ are the same on periods $[t_1, t_2 - 1]$, and the tail risk evaluations for $X_{t_2:T}$ and $Y_{t_2:T}$ satisfy $\varrho_{t_2:T}(X_{t_2:T}) \leq \varrho_{t_2:T}(Y_{t_2:T})$, then $X_{t_1:T}$ is preferred to $Y_{t_1:T}$.

\section{Generalized Capital Requirements}\label{sec:gcr}

In this section, we review the existing CR setup and then present our generalization. 
To begin, let $\mathcal A \subset \mathcal L_{1:T+1}$ be an acceptance set for payoff streams.
The acceptance set may reflect DM's preferences or be fixed by an external regulator.

\begin{example}
(i) Let $\mathcal A = \{X_{0:T} : \sum_{t=0}^T X_t \geq 0\}$ be the set of all payoff streams with non-negative cumulative payoff.

(ii) Let $u : \Re \rightarrow \Re$ be an increasing utility function, and let $\mathcal A = \{X_{0:T} : \mathbb E[u(\sum_{t=0}^T X_t)] \geq 0\}$ be the set of all payoff streams with non-negative expected utility of cumulative payoff.

(iii) For each period $t \in [T]$, let $\mathcal A_t \subset \mathcal L_{t+1}$ be an acceptance set. Then, the product acceptance set is $\mathcal A_{\times} = \mathcal A_0 \times \cdots \times \mathcal A_T$.
\end{example}

Next we introduce the disbursement strategy $Z_{0:T} = (Z_t)_{t=0}^T \in \mathcal L_{0:T}$, where $Z_t \in \mathcal L_{t}$ is the period $t \in [T]$ disbursement.
We let $z_t \in \Re$ denote the realizations of $Z_t$.
The disbursements are financial actions which augment the payoff, e.g., they can represent withdrawal from a savings account, sale of assets, or exercise of an option.
Positive $Z_t$ is considered as a disbursement to supplement the current payoff, negative $Z_t$ is considered as a deposit to savings or a loan repayment, so the total income is $X_t + Z_t$.
We refer to $Z_{0:T}$ generally as `disbursements' but this process takes on other meanings (e.g., consumption, target shortfall, etc.) in our applications.
We emphasize that $Z_t \in \mathcal L_{t}$ while $X_t \in \mathcal L_{t+1}$ for all $t \in [T]$, so disbursements are pre-committed in each period before observing the current payoff.

We suppose DM has no access to an underlying financial market. Rather, DM's financial position is captured by a risk-free savings account that we refer to as the wealth.
In this setting, DM first determines an initial endowment $w_0 \in \Re$ to be put into the risk-free savings account in period $t = 0$.
Then, he determines the timing and the amounts of the disbursements over the planning horizon.
The wealth process is $W_{0:T+1} \triangleq (W_t)_{t=0}^{T+1}$ where $W_t \in \mathcal L_t$ is the wealth at the beginning of period $t \in [T+1]$. 
Wealth evolves according to:
\begin{equation*}
W_{t} = W_{t-1} - Z_{t-1} = w_0 - \sum_{i=0}^{t-1} Z_i,\, \forall t \in [T].
\end{equation*}
In particular, the terminal wealth is $W_{T+1} = w_0 - \sum_{t=0}^T Z_t$.
Wealth can become negative, e.g., when the firm takes out a loan to supplement its payoff. We let $\mathcal{C}_{w} \triangleq \{Z_{0:T} : W_{T+1} \geq 0\}$ be the set of disbursement strategies where all loans are repaid by the end of the planning horizon.

The standard CR finds the minimal initial endowment required for the total income to reach $\mathcal A$, subject to the requirement that all loans are repaid by the end of the planning horizon.

\begin{defn}
\label{defn:CR_standard}
\cite[Definition 4.3]{frittelli2006risk}
The standard CR associated to $\mathcal A$ is the map $\varrho_{\mathcal A} : \mathcal L_{1:T+1} \rightarrow \Re$ defined by:
\[
\varrho_{\mathcal A}(X_{0:T}) \triangleq \min_{w_0 \in \Re,\, Z_{0:T}} \left\{w_0 : X_{0:T} + Z_{0:T} \in \mathcal A,\, Z_{0:T} \in \mathcal{C}_{w} \right\},\, \forall X_{0:T} \in \mathcal L_{1:T+1}.
\]
\end{defn}

There may be additional constraints on feasible disbursement strategies, represented by a set $\mathcal C \subset \mathcal L_{0:T}$.

\begin{example}
(i) $\mathcal{C}_{\geq 0} \triangleq \{Z_{0:T} \geq 0\}$ requires non-negative disbursements.

(ii) $\mathcal C_{\beta} \triangleq \{W_t \geq - \beta,\, \forall t \in [T]\}$ imposes a credit limit of $\beta \geq 0$ in all periods.
\end{example}

Next, we let $\phi : \mathcal C \rightarrow \Re$ be a cost function for the disbursement strategy, where $\phi(Z_{0:T})$ is the period $t = 0$ cost for DM to access $Z_{0:T}$.

\begin{example}
    (i) Expected total disbursement $\phi(Z_{0:T}) = \mathbb E[\sum_{t=0}^T Z_t]$.

    (ii) Risk of total disbursement $\phi(Z_{0:T}) = \rho(\sum_{t=0}^T Z_t)$.

    (iii) Worst-case disbursement $\phi(Z_{0:T}) = \max\{Z_0, \ldots, Z_T\}$.

    (iv) Essential supremum of total disbursement $\phi(Z_{0:T}) = {\rm ess} \sup(\sum_{t=0}^T Z_t)$ (which recovers the standard CR).
\end{example}

The ingredients $(\mathcal A, \mathcal C, \phi)$ together give a CR.

\begin{defn}
\label{defn:CR_deluxe}
The CR associated to $(\mathcal A, \mathcal C, \phi)$ is the map $\varrho_{\mathcal A, \mathcal C, \phi} : \mathcal L_{1:T+1} \rightarrow \Re$ defined by:
\[
\varrho_{\mathcal A, \mathcal C, \phi}(X_{0:T}) \triangleq \min_{Z_{0:T}}\left\{\phi(Z_{0:T}) : X_{0:T} + Z_{0:T} \in \mathcal A,\, Z_{0:T} \in \mathcal C \right\},\, \forall X_{0:T} \in \mathcal L_{1:T+1}.
\]
\end{defn}

We generalize $\varrho_{\mathcal A, \mathcal C, \phi}$ by unifying the ingredients $(\mathcal A, \mathcal C, \phi)$ into a single {\it risk frontier}.
We introduce a {\it premium schedule} $\Phi_{0:T} = (\Phi_t)_{t=0}^T \in \mathcal L_{0:T}$, where $\Phi_t \in \mathcal L_t$ is the period $t \in [T]$ premium. For each $t \in [T]$, $\Phi_t(h_t)$ is the premium for the tail sequence $(X_{t:T}, Z_{t:T}, \Phi_{t:T})$ on the remaining periods $[t, T]$ conditional on history $h_t$.
We let $\varphi_t \in \Re$ denote the realizations of $\Phi_t$.
We motivate each $\Phi_t$ as an epigraphical variable for the corresponding tail problem, i.e., $\min_{x \in \mathbb X} f(x) \equiv \min_{x \in \mathbb X, s \in \Re}\{s : s \geq f(x)\}$.
The initial $\Phi_0$ is the premium for the entire triple $(X_{0:T}, Z_{0:T}, \Phi_{0:T})$.
It is $\mathscr{F}_0-$measurable and so is a constant, we just denote it as $\varphi_0$ when it appears alone.
For later periods $t \in [1, T]$, $\Phi_t$ is $\mathscr{F}_t-$measurable and may depend on the information realized up to the beginning of period $t$.

In the GCR framework we evaluate the risk of $X_{0:T}$ in terms of the entire triple $(X_{0:T}, Z_{0:T}, \Phi_{0:T}) \in \mathcal L_{1:T+1} \times \mathcal L_{0:T} \times \mathcal L_{0:T}$, and next we formally define the set of acceptable triples.
This definition includes some intuitively reasonable monotonicity properties to describe the acceptability of a triple after a change in one of its components.

\begin{defn}[Risk Frontier]
\label{defn:risk_frontier}
A set $\mathcal U \subset \mathcal L_{1:T+1} \times \mathcal L_{0:T} \times \mathcal L_{0:T}$ is a {\em risk frontier} if:

(i) $(X_{0:T}, Z_{0:T}, \Phi_{0:T}) \in \mathcal U$ implies $(X_{0:T}', Z_{0:T}, \Phi_{0:T}) \in \mathcal U$ for all $X_{0:T}' \geq X_{0:T}$.

(ii) $(X_{0:T}, Z_{0:T}, \Phi_{0:T}) \in \mathcal U$ implies $(X_{0:T}, Z_{0:T}, \Phi_{0:T} + \Phi_{0:T}') \in \mathcal U$ for all non-increasing processes $\Phi_{0:T}' \geq 0$ (i.e., $\Phi_t'(h_t) \geq \mathbb E[\Phi_{t+1}'|h_t]$ for all $h_t \in \mathbb{H}_t$ and $t \in [T]$).

(iii) $(X_{0:T}, Z_{0:T}, \Phi_{0:T}) \in \mathcal U$ and $Z_{0:T}' \geq Z_{0:T}$ implies $(X_{0:T}, Z_{0:T}', \Phi_{0:T}') \in \mathcal U$ for some $\Phi_{0:T}' \geq \Phi_{0:T}$.

(iv) $\mathcal U$ is closed.

\noindent
We say a triple is acceptable when $(X_{0:T}, Z_{0:T}, \Phi_{0:T}) \in \mathcal U$.
\end{defn}
\noindent
Definition~\ref{defn:risk_frontier} and $\mathcal U$ completely characterize DM's risk preferences in the GCR framework.
Definition~\ref{defn:risk_frontier}(i) addresses monotonicity with respect to payoffs. If a triple is acceptable, then the same triple with a higher payoff at all times is also acceptable (for the same disbursements and premiums). 
Definition~\ref{defn:risk_frontier}(ii) addresses monotonicity with respect to premiums. If a triple is acceptable, then the same triple with higher premiums (that remain non-increasing in time) is also acceptable (to the financial institution).
Definition~\ref{defn:risk_frontier}(iii) addresses monotonicity with respect to disbursements. If a triple is acceptable, then the same triple with a higher disbursement is acceptable for a possibly higher premium (i.e., greater disbursement incurs greater costs).
Finally, Definition~\ref{defn:risk_frontier}(iv) states that if a sequence of triples is acceptable, then its limit should also be acceptable.

Next we define the risk evaluation given a risk frontier.
\begin{defn}[Generalized CR]
\label{defn:GCR}
Let $\mathcal U$ be a risk frontier.
The generalized CR (GCR) associated to $\mathcal U$ is the map $\varrho_{\mathcal U} : \mathcal L_{1:T+1} \rightarrow \Re$ defined by:
\begin{equation}
\label{eq:GCR_minimization}
\varrho_{\mathcal U}(X_{0:T}) \triangleq \min_{Z_{0:T}, \Phi_{0:T}} \{ \varphi_0 : (X_{0:T}, Z_{0:T}, \Phi_{0:T}) \in \mathcal U\},\, \forall X_{0:T} \in \mathcal L_{1:T+1}.
\end{equation}
\end{defn}
\noindent
In Eq.~\eqref{eq:GCR_minimization}, the objective is the period $t = 0$ premium $\varphi_0$.
Given a payoff stream $X_{0:T}$, $\varrho_{\mathcal U}(X_{0:T})$ optimizes $(Z_{0:T}, \Phi_{0:T})$ over acceptable triples $(X_{0:T}, Z_{0:T}, \Phi_{0:T}) \in \mathcal U$ to minimize $\varphi_0$.

\begin{rem}
We canonically define a GCR as a minimization problem, but we can similarly define it as a maximization problem:
\begin{equation}
\label{eq:GCR_maximization}
\varrho_{\mathcal U}(X_{0:T}) \triangleq \max_{Z_{0:T}, \Phi_{0:T}} \{ \varphi_0 : (X_{0:T}, Z_{0:T}, \Phi_{0:T}) \in \mathcal U\},\, \forall X_{0:T} \in \mathcal L_{1:T+1}.
\end{equation}
For $\varrho_{\mathcal U}$ in maximization form Eq.~\eqref{eq:GCR_maximization}, we re-interpret $\Phi_{0:T} = (\Phi_0, \ldots, \Phi_T) \in \mathcal L_{0:T}$ as an acceptability schedule, where $\Phi_t \in \mathcal L_t$ corresponds to the degree of acceptability of the tail payoff stream $X_{t:T}$ for all $t \in [T]$.
\end{rem}

The next assumption gives conditions for $\varrho_{\mathcal U}(X_{0:T})$ to be well-defined (where the optimal value of the inner minimization is attained).
The set $\Omega$ is finite under Assumption~\ref{assu:preliminaries}, so we can identify stochastic processes on $\Omega$ with vectors in a Euclidean space.
For fixed $X_{0:T} \in \mathcal L_{1:T+1}$ and a level $\tau \in \Re$, define (the possibly empty set)
$$
\mathcal{U}_{\tau}(X_{0:T}) \triangleq \{ (Z_{0:T}, \Phi_{0:T}) : \varphi_0 \leq \tau,\, (X_{0:T}, Z_{0:T}, \Phi_{0:T}) \in \mathcal U \}.
$$

\begin{assumption}
\label{assu:compact}
For any $X_{0:T} \in \mathcal L_{1:T+1}$, $\mathcal{U}_{\tau}(X_{0:T})$ is compact for all $\tau \in \mathbb R$.
\end{assumption}
\noindent
Assumption~\ref{assu:compact} acts as a growth condition on $\mathcal U$.
For a fixed level $\tau$, there is no disbursement $Z_t$ that we can increase arbitrarily while keeping $\varphi_0 \leq \tau$.
Under this assumption, we can establish compactness and apply the Weierstrass theorem to Eq.~\eqref{eq:GCR_minimization} to show that the minimum in $\varrho_{\mathcal U}(X_{0:T})$ is attained.

\begin{thm}
\label{thm:well-defined}
Suppose Assumption~\ref{assu:compact} holds, let $X_{0:T} \in \mathcal L_{1:T+1}$, and suppose $\mathcal{U}_{\tau}(X_{0:T})$ is non-empty for some $\tau \in \Re$. Then, $\varrho_{\mathcal U}(X_{0:T})$ is finite and the minimum is attained.
\end{thm}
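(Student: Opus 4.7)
The plan is to turn the minimization in Eq.~\eqref{eq:GCR_minimization} into a Weierstrass argument on the sublevel set $\mathcal{U}_\tau(X_{0:T})$. Under Assumption~\ref{assu:preliminaries}, $\Omega$ is finite, so every stochastic process on $\Omega$ can be identified with a vector in a Euclidean space of the appropriate dimension, and the coordinate map $(Z_{0:T},\Phi_{0:T})\mapsto \varphi_0$ is continuous (indeed a single linear projection). This lets me appeal to ordinary finite-dimensional compactness.

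First I would fix a $\tau \in \Re$ for which $\mathcal{U}_\tau(X_{0:T})$ is non-empty, which exists by hypothesis. Choosing any feasible $(Z_{0:T},\Phi_{0:T}) \in \mathcal{U}_\tau(X_{0:T})$ gives $\varrho_{\mathcal U}(X_{0:T}) \le \tau$. I would then argue that $\mathcal{U}_\tau(X_{0:T})$ is in fact closed: it is the intersection of the closed set $\mathcal{U}$ (Definition~\ref{defn:risk_frontier}(iv)), the closed slice $\{(X_{0:T}, \cdot, \cdot) : \cdot \in \mathcal L_{0:T}\times\mathcal L_{0:T}\}$, and the closed halfspace $\{\varphi_0 \le \tau\}$. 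Combined with Assumption~\ref{assu:compact}, $\mathcal{U}_\tau(X_{0:T})$ is a non-empty compact subset of a Euclidean space.

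Next I would rule out $\varrho_{\mathcal U}(X_{0:T}) = -\infty$. The key observation is that, without loss of generality, any minimizing sequence $(Z_{0:T}^n,\Phi_{0:T}^n)$ can be taken to satisfy $\varphi_0^n \le \tau$ for all $n$, and hence lies entirely in the compact set $\mathcal{U}_\tau(X_{0:T})$. Passing to a convergent subsequence and using continuity of $\varphi_0\mapsto\varphi_0$ gives a finite limit of $\varphi_0^n$; its value equals $\varrho_{\mathcal U}(X_{0:T})$, which is therefore finite.

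Finally, attainment follows by the same argument: take a minimizing sequence in $\mathcal{U}_\tau(X_{0:T})$, extract a convergent subsequence with some limit $(Z_{0:T}^\star,\Phi_{0:T}^\star)$, use closedness of $\mathcal{U}_\tau(X_{0:T})$ to conclude $(X_{0:T},Z_{0:T}^\star,\Phi_{0:T}^\star)\in\mathcal U$, and use continuity of the projection $\varphi_0$ to conclude $\varphi_0^\star = \varrho_{\mathcal U}(X_{0:T})$. The only substantive step is verifying that $\mathcal{U}_\tau(X_{0:T})$ inherits closedness, which would be the main place where I would be careful; everything else is a direct application of Weierstrass's theorem once the finite-dimensional compactness provided by Assumption~\ref{assu:compact} has been combined with the closedness from Definition~\ref{defn:risk_frontier}(iv).
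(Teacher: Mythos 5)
Your proof is correct and follows exactly the route the paper indicates (the paper only sketches it in the paragraph preceding the theorem): identify processes with Euclidean vectors via finiteness of $\Omega$, restrict any minimizing sequence to the non-empty compact sublevel set $\mathcal{U}_{\tau}(X_{0:T})$, and apply Weierstrass to the continuous projection onto $\varphi_0$. The only superfluous step is your separate verification that $\mathcal{U}_{\tau}(X_{0:T})$ is closed, since Assumption~\ref{assu:compact} already asserts compactness directly.
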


Now we establish some properties of the GCR as a function of the properties of $\mathcal U$, this result extends \cite[Proposition 4.6]{frittelli2006risk} to our setting.
We note that we only need monotonicity of the GCR for our upcoming temporal decompositions.

\begin{thm}
\label{thm:properties}
Let $\mathcal U$ be a risk frontier.

(i) $\varrho_{\mathcal U}$ is monotone.

(ii) Suppose $\mathcal U$ is convex, then $\varrho_{\mathcal U}$ is convex.

(iii) Suppose $\mathcal U$ is a cone, then $\varrho_{\mathcal U}$ is positively homogeneous.
\end{thm}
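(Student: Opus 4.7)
The plan is to exploit the epigraphical structure of $\varrho_{\mathcal U}$ as a minimization over a feasible region built from $\mathcal U$, and to lift set-theoretic properties of $\mathcal U$ directly into analytic properties of the value $\varphi_0$. In each case the key observation is that the initial premium $\varphi_0$ is an $\mathscr{F}_0$-measurable constant that varies linearly under scaling and averaging of the decision variables $(Z_{0:T}, \Phi_{0:T})$, so any algebraic operation preserving membership in $\mathcal U$ transfers immediately into an inequality on $\varrho_{\mathcal U}$.

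For (i), I fix $X_{0:T} \geq Y_{0:T}$ and observe that any $(Z_{0:T}, \Phi_{0:T})$ with $(Y_{0:T}, Z_{0:T}, \Phi_{0:T}) \in \mathcal U$ yields $(X_{0:T}, Z_{0:T}, \Phi_{0:T}) \in \mathcal U$ by Definition~\ref{defn:risk_frontier}(i); hence the feasible region for $X_{0:T}$ contains that for $Y_{0:T}$, forcing $\varrho_{\mathcal U}(X_{0:T}) \leq \varrho_{\mathcal U}(Y_{0:T})$. For (ii), I select optimal (or, if needed, $\varepsilon$-optimal without Assumption~\ref{assu:compact}) triples $(X_{0:T}, Z_X^\star, \Phi_X^\star)$ and $(Y_{0:T}, Z_Y^\star, \Phi_Y^\star)$ in $\mathcal U$ via Theorem~\ref{thm:well-defined}, with initial premiums $\varrho_{\mathcal U}(X_{0:T})$ and $\varrho_{\mathcal U}(Y_{0:T})$. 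Their convex combination lies in $\mathcal U$ by convexity of $\mathcal U$, is feasible for $\lambda X_{0:T} + (1-\lambda) Y_{0:T}$, and its initial premium is $\lambda \varrho_{\mathcal U}(X_{0:T}) + (1-\lambda) \varrho_{\mathcal U}(Y_{0:T})$ by linearity of the constant $\varphi_0$ in the combined schedule, yielding the desired convexity inequality.

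For (iii), the case $\alpha > 0$ is a two-sided scaling argument: from an optimal triple for $X_{0:T}$ the cone property places $(\alpha X_{0:T}, \alpha Z_X^\star, \alpha \Phi_X^\star)$ in $\mathcal U$ with initial premium $\alpha \varrho_{\mathcal U}(X_{0:T})$, so $\varrho_{\mathcal U}(\alpha X_{0:T}) \leq \alpha \varrho_{\mathcal U}(X_{0:T})$; applying the same step with scaling factor $1/\alpha$ to $\alpha X_{0:T}$ yields the reverse inequality. The $\alpha = 0$ case reduces to $\varrho_{\mathcal U}(0) = 0$: the closed cone property gives $(0,0,0) \in \mathcal U$, so $\varrho_{\mathcal U}(0) \leq 0$, and a negative value $\varphi_0 < 0$ at $X_{0:T} = 0$ would, via the cone property applied to $\lambda \to \infty$, drive the attainable premiums to $-\infty$, contradicting the finiteness from Theorem~\ref{thm:well-defined}. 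The only thing to double-check along the way is that all scaled and averaged processes remain in $\mathcal L_{0:T}$ with the correct $\mathscr{F}_t$-measurability; this is immediate since each $\mathcal L_t$ is a linear space closed under non-negative scaling. Beyond this book-keeping there is no substantive obstacle—the entire result is a direct translation of the structural hypotheses on $\mathcal U$ through the definition of $\varrho_{\mathcal U}$.
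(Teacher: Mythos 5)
Your proof is correct and follows essentially the same route as the paper: monotonicity via inclusion of the feasible sets using Definition~\ref{defn:risk_frontier}(i), convexity by averaging feasible triples inside the convex set $\mathcal U$, and positive homogeneity by the change of variables $(Z_{0:T},\Phi_{0:T}) \mapsto (\alpha Z_{0:T}, \alpha \Phi_{0:T})$ under the cone property. The paper works with arbitrary feasible triples rather than attained minimizers (so it never invokes Assumption~\ref{assu:compact}), which is exactly the $\varepsilon$-optimal fallback you mention, and your separate treatment of $\alpha = 0$ goes slightly beyond the paper, which only argues the case $\alpha > 0$.
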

\noindent
Monotonicity is expected for any sound dynamic risk measure from a decision-theoretic sense, Theorem~\ref{thm:properties}(i) shows that this property is automatic for any GCR by Definition~\ref{defn:risk_frontier}.

We conclude this section with some examples of GCRs and their risk frontiers.

\begin{example}
Let $\mathcal A \subset \mathcal L_{T+1}$ be an acceptance set for the cumulative payoff and $\varrho(X_{0:T}) = \min_{z \in \Re} \{ z : \sum_{t=0}^T X_t + z \in \mathcal A \}$.
Then $\varrho$ is recovered by $\mathcal U = \{ (X_{0:T}, Z_{0:T}, \Phi_{0:T}) : \varphi_0 \geq z_0,\, z_0 + \sum_{t=0}^T X_t \in \mathcal A\}$, where the disbursements $Z_{1:T}$ do not appear.
\end{example}

\begin{example}
The standard CR $\varrho_{\mathcal A}$ is recovered by $\mathcal U = \{(X_{0:T}, Z_{0:T}, \Phi_{0:T}) : X_{0:T} + Z_{0:T} \in \mathcal A,\, Z_{0:T} \in \mathcal C_{w},\, \varphi_0 \geq w_0, w_0 \in \Re\}$.
When $\mathcal A$ is convex, the inner optimization problem in $\varrho_{\mathcal A}(X_{0:T})$ is convex for any $X_{0:T}$ by linearity of the wealth update.
\end{example}

\begin{example}
The CR $\varrho_{\mathcal A, \mathcal C, \phi}$ is recovered by $\mathcal U = \{(X_{0:T}, Z_{0:T}, \Phi_{0:T}) : X_{0:T} + Z_{0:T} \in \mathcal A,\, Z_{0:T} \in \mathcal C,\, \varphi_0 \geq \phi(Z_{0:T})\}$.
In particular, $\varphi_0$ corresponds to $\phi(Z_{0:T})$ (the time $t = 0$ total cost).
When $\mathcal A$ and $\mathcal C$ are convex sets, and $\phi$ is a convex function, then $\mathcal U$ is convex and the inner optimization problem in $\varrho_{\mathcal A, \mathcal C, \phi}(X_{0:T})$ is convex for any $X_{0:T}$.
\end{example}

\begin{example}
Let $\nu : \mathcal L_{1:T+1} \rightarrow \mathbb R$ be a convex functional and let $\varrho_{\nu, \mathcal C, \phi} : \mathcal L_{1:T+1} \rightarrow \Re$ be the risk measure defined as:
\[
\varrho_{\nu, \mathcal C, \phi}(X_{0:T}) \triangleq \min_{Z_{0:T}} \{\phi(Z_{0:T}) + \nu(X_{0:T} + Z_{0:T}) : Z_{0:T} \in \mathcal C \},\, \forall X_{0:T} \in \mathcal L_{1:T+1}.
\]
Then $\varrho_{\nu, \mathcal C, \phi}$ is recovered by $\mathcal U = \{(X_{0:T}, Z_{0:T}, \Phi_{0:T}) : Z_{0:T} \in \mathcal C,\, \varphi_0 \geq \phi(Z_{0:T}) + \nu(X_{0:T} + Z_{0:T})\}$.
Notice that $\mathcal U$ is convex if $\mathcal C$ is a convex set, and $\phi$ and $\nu$ are convex functions.
\end{example}

\section{Temporal Decomposition of Risk Frontier}\label{sec:tempCR}

In the dynamic setting, we need to update our risk evaluations over time as new information is revealed.
In this section we discuss the temporal decomposition of the GCR.
In each period $t \in [T]$ after observing $h_t \in \mathbb{H}_t$, we want to evaluate the risk of the tail payoff stream $X_{t:T}$ conditional on $h_t$.
This is done in the GCR framework by optimizing over a tail disbursement strategy $Z_{t:T} \in \mathcal L_{t:T}$ and a tail premium schedule $\Phi_{t:T} \in \mathcal L_{t:T}$ (both of which are also conditional on $h_t$).

To continue, we make the following assumption on DM's available information.
\begin{assumption}
\label{assu:recall}
DM has perfect recall of $(X_{0:t-1}, Z_{0:t-1}, \Phi_{0:t-1})$ given $h_t$ for all $t \in [1, T+1]$.
\end{assumption}
\noindent
Assumption~\ref{assu:recall} states that knowledge of $h_t$ implicitly determines the historical payoffs $[X_{0:t-1} \vert h_t]$, disbursements $[Z_{0:t-1} \vert h_t]$, and premiums $[\Phi_{0:t-1} \vert h_t]$.

We need to disassemble the risk frontier to determine what membership in $\mathcal U$ implies with respect to $(X_{t:T}, Z_{t:T}, \Phi_{t:T})$.
The following definition gives a general notion of a temporal decomposition of $\mathcal U$ where the requirements on $(X_{t:T}, Z_{t:T}, \Phi_{t:T})$ only depend on what has happened so far on $h_t$.

\begin{defn}
\label{defn:U_decomposition}
We say that $\mathcal U$ has a temporal decomposition when there exist correspondences $\mathcal U_{t:T} : \mathbb{H}_t \rightrightarrows \mathcal L_{t+1:T+1} \times \mathcal L_{t:T} \times \mathcal L_{t:T}$ for all $t \in [T]$, such that:
\begin{align*}
    \mathcal U = \{(X_{0:T}, Z_{0:T}, \Phi_{0:T}) \text{ : } & [(X_{t:T}, Z_{t:T}, \Phi_{t:T}) \vert h_t] \in \mathcal U_{t:T}(h_t), \forall h_t \in \mathbb{H}_t, \forall t \in [T]\}.
\end{align*}
\end{defn}
\noindent
We refer to the correspondences $\{\mathcal U_{t:T}\}_{t \in [T]}$ in Definition~\ref{defn:U_decomposition} as the {\em tail risk frontiers}.
Under Definition~\ref{defn:U_decomposition}, we can define a tail risk measure $\varrho_{\mathcal U, t:T} : \mathcal L_{t+1:T+1} \rightarrow \mathcal L_t$ for all $t \in [T]$ by:
\[
    [\varrho_{\mathcal U, t:T}(X_{t:T})](h_t) \triangleq \min_{Z_{t:T}, \Phi_{t:T}} \{ \Phi_t(h_t) : [(X_{t:T}, Z_{t:T}, \Phi_{t:T}) \vert h_t] \in \mathcal U_{t:T}(h_t) \},\, \forall h_t \in \mathbb{H}_t,
\]
for all $X_{t:T} \in \mathcal L_{t:T}$.
Definition~\ref{defn:U_decomposition} then induces the dynamic risk measure $\{\varrho_{\mathcal U, t:T}\}_{t \in [T]}$ as a function of $\mathcal U$.
In each period $t \in [T]$ on history $h_t \in \mathbb{H}_t$, we evaluate the risk of $X_{t:T}$ by optimizing over feasible $(Z_{t:T}, \Phi_{t:T})$ for the inclusion $[(X_{t:T}, Z_{t:T}, \Phi_{t:T}) \vert h_t] \in \mathcal U_{t:T}(h_t)$ with respect to the tail risk frontier.
We see that $\varrho_{\mathcal U, t:T}$ matches the structure of the original $\varrho_{\mathcal U}$ and is in epigraphical form, where all the particulars of DM's risk preferences for $X_{t:T}$ are contained in $\mathcal U_{t:T}(h_t)$.

Definition~\ref{defn:U_decomposition} yields the dynamic risk measure $\{\varrho_{\mathcal U, t:T}\}_{t \in [T]}$, but it is not automatically time-consistent.
Time consistency of $\{\varrho_{\mathcal U, t:T}\}_{t \in [T]}$ requires additional structure on $\mathcal U$.
Next we give a sufficient condition for time consistency of $\{\varrho_{\mathcal U, t:T}\}_{t \in [T]}$ expressed in terms of membership in the tail risk frontiers $\{\mathcal U_{t:T}\}_{t \in [T]}$.

\begin{assumption}
\label{assu:risk_frontier_consistency}
Choose $0 \leq t_1 < t_2 \leq T$, $[(X_{t_1:T}, Z_{t_1:T}, \Phi_{t_1:T}) \vert h_{t_1}] \in \mathcal U_{t_1:T}(h_{t_1})$ for all $h_{t_1} \in \mathbb{H}_{t_1}$, and $[(X_{t_2:T}', Z_{t_2:T}', \Phi_{t_2:T}') \vert h_{t_2}] \in \mathcal U_{t_2:T}(h_{t_2})$ with $\Phi_{t_2}'(h_{t_2}) \leq \Phi_{t_2}(h_{t_2})$ for all $h_{t_2} \in \mathbb{H}_{t_2}$.
Define $(\tilde X_{t_1:T}, \tilde Z_{t_1:T}, \tilde \Phi_{t_1:T})$ as 
\begin{align*}
    (\tilde X_{k}, \tilde Z_{k}, \tilde \Phi_{k}) &= (X_{k}, Z_{k}, \Phi_{k}),\, \forall k \in [t_1, t_2 - 1],\\
    (\tilde X_{k}, \tilde Z_{k}, \tilde \Phi_{k}) &= (X_{k}', Z_{k}', \Phi_{k}'),\, \forall k \in [t_2, T], 
\end{align*}
then $[(\tilde X_{t_1:T}, \tilde Z_{t_1:T}, \tilde \Phi_{t_1:T}) \vert h_{t_1}] \in \mathcal U_{t_1:T}(h_{t_1})$ for all $h_{t_1} \in \mathbb{H}_{t_1}$.
\end{assumption}
\noindent
Assumption~\ref{assu:risk_frontier_consistency} states that when we replace a tail sequence with one with a lower premium, then the overall risk evaluation is the same or lower.

\begin{thm}
\label{thm:time_consistent}
Suppose Assumption~\ref{assu:risk_frontier_consistency} holds, then $\{\varrho_{\mathcal U, t:T}\}_{t \in [T]}$ is time-consistent.
\end{thm}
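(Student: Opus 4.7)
The plan is to follow a direct pasting argument. Fix $0 \le t_1 < t_2 \le T$ along with payoff streams $X_{t_1:T}, Y_{t_1:T}$ satisfying $X_k = Y_k$ for $k \in [t_1, t_2 - 1]$ and $\varrho_{\mathcal U, t_2:T}(X_{t_2:T})(h_{t_2}) \le \varrho_{\mathcal U, t_2:T}(Y_{t_2:T})(h_{t_2})$ for every $h_{t_2} \in \mathbb H_{t_2}$. Pick an arbitrary $h_{t_1} \in \mathbb H_{t_1}$. It suffices to exhibit a triple feasible for the minimization defining $\varrho_{\mathcal U, t_1:T}(X_{t_1:T})(h_{t_1})$ whose period-$t_1$ premium matches $\varrho_{\mathcal U, t_1:T}(Y_{t_1:T})(h_{t_1})$.

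The ingredients come from two optimal selections. First, by Theorem~\ref{thm:well-defined} together with finiteness of $\Omega$ from Assumption~\ref{assu:preliminaries}, choose $(Z^Y_{t_1:T}, \Phi^Y_{t_1:T})$ attaining $\varrho_{\mathcal U, t_1:T}(Y_{t_1:T})(h_{t_1})$, so that $[(Y_{t_1:T}, Z^Y_{t_1:T}, \Phi^Y_{t_1:T}) \vert h_{t_1}] \in \mathcal U_{t_1:T}(h_{t_1})$ and $\Phi^Y_{t_1}(h_{t_1}) = \varrho_{\mathcal U, t_1:T}(Y_{t_1:T})(h_{t_1})$. Second, for each $h_{t_2}$ extending $h_{t_1}$, pick $(Z^X_{t_2:T}, \Phi^X_{t_2:T})$ with $[(X_{t_2:T}, Z^X_{t_2:T}, \Phi^X_{t_2:T}) \vert h_{t_2}] \in \mathcal U_{t_2:T}(h_{t_2})$ and $\Phi^X_{t_2}(h_{t_2}) = \varrho_{\mathcal U, t_2:T}(X_{t_2:T})(h_{t_2})$; since $\Omega$ is finite, these conditional choices glue into a single process.

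Next I form the spliced triple $(\tilde X, \tilde Z, \tilde \Phi)$ agreeing with $(Y, Z^Y, \Phi^Y)$ on $[t_1, t_2-1]$ and with $(X, Z^X, \Phi^X)$ on $[t_2, T]$. Since $X_k = Y_k$ on $[t_1, t_2 - 1]$, the payoff coordinate coincides with $X_{t_1:T}$. To invoke Assumption~\ref{assu:risk_frontier_consistency} I need $\Phi^X_{t_2}(h_{t_2}) \le \Phi^Y_{t_2}(h_{t_2})$ for every $h_{t_2}$. This comes from two observations: (a) the $t_2$-tail conditional of the optimal $t_1$-triple inherits membership in $\mathcal U_{t_2:T}(h_{t_2})$ via Definition~\ref{defn:U_decomposition}, which forces $\Phi^Y_{t_2}(h_{t_2}) \ge \varrho_{\mathcal U, t_2:T}(Y_{t_2:T})(h_{t_2})$; and (b) the hypothesis supplies $\varrho_{\mathcal U, t_2:T}(Y_{t_2:T})(h_{t_2}) \ge \varrho_{\mathcal U, t_2:T}(X_{t_2:T})(h_{t_2}) = \Phi^X_{t_2}(h_{t_2})$. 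Assumption~\ref{assu:risk_frontier_consistency} then produces $[(\tilde X, \tilde Z, \tilde \Phi) \vert h_{t_1}] \in \mathcal U_{t_1:T}(h_{t_1})$, and its premium at $t_1$ is $\tilde \Phi_{t_1}(h_{t_1}) = \Phi^Y_{t_1}(h_{t_1}) = \varrho_{\mathcal U, t_1:T}(Y_{t_1:T})(h_{t_1})$, so minimizing gives $\varrho_{\mathcal U, t_1:T}(X_{t_1:T})(h_{t_1}) \le \varrho_{\mathcal U, t_1:T}(Y_{t_1:T})(h_{t_1})$.

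The main obstacle, in my view, is rigorously justifying step (a): that the $t_2$-tail of an optimal $t_1$-triple lies in $\mathcal U_{t_2:T}(h_{t_2})$. The tail risk measure $\varrho_{\mathcal U, t_1:T}$ only directly requires $\mathcal U_{t_1:T}$-membership, so the proof needs the temporal decomposition of Definition~\ref{defn:U_decomposition} to be mutually consistent in the sense that any conditional triple in $\mathcal U_{t_1:T}(h_{t_1})$ extends to a full process in $\mathcal U$, whose deeper tails then automatically lie in the corresponding $\mathcal U_{t_2:T}(h_{t_2})$. Once this nestedness is invoked, the remainder is a mechanical pasting argument powered entirely by Assumption~\ref{assu:risk_frontier_consistency}.
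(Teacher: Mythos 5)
Your proof is correct and follows essentially the same route as the paper's: form a spliced triple from a $t_1$-acceptable triple for $Y$ and a lower-premium $t_2$-acceptable tail triple for $X$, invoke Assumption~\ref{assu:risk_frontier_consistency} to place the splice in $\mathcal U_{t_1:T}(h_{t_1})$, and read off the premium bound. The ``nestedness'' obstacle you flag in step (a) --- that the $t_2$-tail of a $t_1$-acceptable triple must itself lie in $\mathcal U_{t_2:T}(h_{t_2})$ so that its premium dominates $\varrho_{\mathcal U, t_2:T}$ --- is relied upon equally implicitly by the paper's own proof, so it is not a defect of your argument relative to theirs; the only other cosmetic difference is that you invoke attainment (Theorem~\ref{thm:well-defined}, hence Assumption~\ref{assu:compact}, which is not formally among this theorem's hypotheses) to select exact minimizers, whereas the paper argues with arbitrary acceptable triples and takes infima at the end.
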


\subsection{Recursive Formulation}

Under Definition~\ref{defn:U_decomposition}, we need to construct the entire tail sequence $(X_{t:T}, Z_{t:T}, \Phi_{t:T})$ for the period $t \in [T]$ risk evaluation.
The following assumption is a special case of Definition~\ref{defn:U_decomposition} based on recursive structure of the risk frontier.
Under this structure, the period $t+1$ premium $\Phi_{t+1}$ is a sufficient statistic for the tail sequence $(X_{t+1:T}, Z_{t+1:T}, \Phi_{t+1:T})$.
We take $\Phi_{T+1} \in \mathcal L_{T+1}$ as an input that reflects any terminal conditions.

\begin{assumption}
\label{assu:one-step}
There exist correspondences $\mathcal U_t : \mathbb{H}_t \times \mathcal L_{t+1} \rightrightarrows \mathcal L_{t+1} \times \Re \times \Re$ for all $t \in [T]$ such that:

(i) The risk frontier has the decomposition
\begin{equation}
\label{eq:risk_frontier_recursive}
\mathcal U = \{(X_{0:T}, Z_{0:T}, \Phi_{0:T}) 
 \text{ : } [(X_{t}, Z_{t}, \Phi_{t}) \vert h_t]  \in \mathcal U_{t}(h_t, \Phi_{t+1}),\, \forall h_t \in \mathbb{H}_t,\, \forall t \in [T]\}.
\end{equation}

(ii) For any $t \in [T]$, $h_t \in \mathbb{H}_t$, $\Phi_{t+1}\in\mathcal L_{t+1}$, if $(X_t, z_t, \varphi_t) \in \mathcal U_t(h_t, \Phi_{t+1})$ and $X_t \leq X_t'$ then $(X_t', z_t, \varphi_t) \in \mathcal U_t(h_t, \Phi_{t+1})$.

(iii) For any $\Phi_{t+1} \leq \Phi_{t+1}'$, we have $\mathcal U_t(h_t, \Phi_{t+1}') \subset \mathcal U_t(h_t, \Phi_{t+1})$ for all $h_t \in \mathbb{H}_t$.

\end{assumption}
\noindent
Assumption~\ref{assu:one-step} parallels the classical DP principle, where in period $t$, dependence on the future only enters through the period $t+1$ value function (which gives the remaining costs on periods $[t+1,T]$).
The dependence across time here is captured by the presence of both $(\Phi_t(h_t), \Phi_{t+1})$ in each inclusion $[(X_{t}, Z_{t}, \Phi_{t}) \vert h_t] \in \mathcal U_{t}(h_t, \Phi_{t+1})$.
In the classical case, $\Phi_t(h_t)$ corresponds to the current expected cost-to-go on $h_t$, and $\Phi_{t+1}$ is the next period cost-to-go function.
Unlike the classical DP principle based on expected cost-to-go, we account for the entire distribution of $\Phi_{t+1}$ through the dependence of $\mathcal U_t(h_t, \Phi_{t+1})$ on $\Phi_{t+1}$.

In the next theorem, we show that the dynamic risk measure $\{\varrho_{\mathcal U, t:T}\}_{t \in [T]}$ has a recursive form under Assumption~\ref{assu:one-step}, where we initialize with $\varrho_{\mathcal U, T+1:T} \equiv \Phi_{T+1}$. Additionally, we demonstrate that this recursive structure leads to time consistency of $\{\varrho_{\mathcal U, t:T}\}_{t \in [T]}$.

\begin{thm}
\label{thm:recursive}
Suppose Assumption~\ref{assu:one-step} holds.

(i) For any $t \in [T]$, $X_{t:T} \in \mathcal L_{t+1:T+1}$, we have:
\begin{equation}
\label{eq:recursive}
[\varrho_{\mathcal U, t:T}(X_{t:T})](h_t) = \min_{z_t, \varphi_t} \{\varphi_t : ([X_t \vert h_t], z_t, \varphi_t) \in \mathcal U_t(h_t, \varrho_{\mathcal U, t+1 : T}(X_{t+1:T}))\},\, \forall h_t \in \mathbb{H}_t.
\end{equation}

(ii) $\{\varrho_{\mathcal U, t:T}\}_{t \in [T]}$ is time-consistent.
\end{thm}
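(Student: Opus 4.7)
The plan is to prove (i) by algebraically splitting the joint minimization in the definition of $\varrho_{\mathcal U, t:T}(X_{t:T})$ into an outer min over the period-$t$ variables and an inner min over the tail, and then using the anti-monotonicity in Assumption~\ref{assu:one-step}(iii) to replace the optimal inner $\Phi_{t+1}$ by $\varrho_{\mathcal U, t+1:T}(X_{t+1:T})$. Part (ii) then follows from (i) by backward induction together with the same anti-monotonicity property.

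For (i), I would first rewrite $[\varrho_{\mathcal U, t:T}(X_{t:T})](h_t)$ using Assumption~\ref{assu:one-step}(i): the constraint $[(X_{t:T}, Z_{t:T}, \Phi_{t:T}) \vert h_t] \in \mathcal U_{t:T}(h_t)$ becomes $([X_k \vert h_k], z_k, \varphi_k) \in \mathcal U_k(h_k, \Phi_{k+1})$ for every $h_k \in \mathbb{H}_k$ extending $h_t$ and every $k \in [t, T]$. I would split the joint min as $\min_{z_t, \varphi_t} \min_{(Z_{t+1:T}, \Phi_{t+1:T})}$. For the ``$\leq$'' direction, take an optimal $(z_t^*, \varphi_t^*)$ for the RHS of~\eqref{eq:recursive}; for each $h_{t+1}$ extending $h_t$ pick a minimizer of the period-$(t+1)$ tail problem (attained by Theorem~\ref{thm:well-defined}), and concatenate these across the finite set $\mathbb{H}_{t+1}$ to produce a $(Z_{t+1:T}, \Phi_{t+1:T})$ whose induced $\Phi_{t+1}$ equals $\varrho_{\mathcal U, t+1:T}(X_{t+1:T})$ pointwise. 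Combined with $(z_t^*, \varphi_t^*)$, this yields a feasible full tail certifying that LHS $\leq \varphi_t^*$. For the ``$\geq$'' direction, take an optimal $(Z_{t:T}^*, \Phi_{t:T}^*)$ for the LHS; restricting to periods $[t+1, T]$ shows $(Z_{t+1:T}^*, \Phi_{t+1:T}^*)$ is feasible for the period-$(t+1)$ tail problem on each $h_{t+1}$, so $\Phi_{t+1}^* \geq \varrho_{\mathcal U, t+1:T}(X_{t+1:T})$ pointwise. Assumption~\ref{assu:one-step}(iii) then gives $([X_t \vert h_t], z_t^*, \varphi_t^*) \in \mathcal U_t(h_t, \Phi_{t+1}^*) \subset \mathcal U_t(h_t, \varrho_{\mathcal U, t+1:T}(X_{t+1:T}))$, so $(z_t^*, \varphi_t^*)$ is feasible for the RHS and RHS $\leq \varphi_t^* = $ LHS.

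For (ii), I would argue by backward induction on $k$ from $t_2$ down to $t_1$. The inductive hypothesis at step $k$ is $\varrho_{\mathcal U, k:T}(X_{k:T}) \leq \varrho_{\mathcal U, k:T}(Y_{k:T})$, which holds for $k = t_2$ by assumption. For the inductive step, $X_{k-1} = Y_{k-1}$ (since $k - 1 \in [t_1, t_2 - 1]$) combined with the inductive hypothesis and Assumption~\ref{assu:one-step}(iii) gives $\mathcal U_{k-1}(h_{k-1}, \varrho_{\mathcal U, k:T}(Y_{k:T})) \subset \mathcal U_{k-1}(h_{k-1}, \varrho_{\mathcal U, k:T}(X_{k:T}))$ for every $h_{k-1}$, so any $(z, \varphi)$ feasible for the $Y$-version of~\eqref{eq:recursive} at step $k-1$ is also feasible for the $X$-version. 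Applying part (i) to both sides yields $\varrho_{\mathcal U, k-1:T}(X_{k-1:T}) \leq \varrho_{\mathcal U, k-1:T}(Y_{k-1:T})$, closing the induction. Reaching $k = t_1$ gives the time-consistency inequality.

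The main obstacle I anticipate is the ``$\leq$'' direction of (i): we must stitch together tail minimizers chosen separately for each $h_{t+1}$ into a single $\mathscr{F}_{t+1}$-adapted process $(Z_{t+1:T}, \Phi_{t+1:T})$ whose $\Phi_{t+1}$ matches $\varrho_{\mathcal U, t+1:T}(X_{t+1:T})$ pointwise. This concatenation is immediate here because $\mathbb{H}_{t+1}$ is finite under Assumption~\ref{assu:preliminaries}, but in a continuous-state extension one would need a measurable selection theorem. Part (ii) then requires no further machinery beyond a single inductive application of Assumption~\ref{assu:one-step}(iii) per period.
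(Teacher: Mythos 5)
Your proof is correct and arrives at the same decomposition as the paper, but it bypasses the paper's central technical device. The paper proves part (i) by recasting the nested minimization in lattice-theoretic terms: it defines sets $\mathcal W_t(\Phi_{t+1})$ of admissible premium processes, shows they are complete lattices and anti-monotone in $\Phi_{t+1}$ (Remark following their definition, via Assumption~\ref{assu:one-step}(iii)), and then invokes an abstract interchange result (Lemma~\ref{lem:lattice-decomposition}) to equate the lattice minimum of the coupled system with the lattice minimum of $\mathcal W_t$ evaluated at the next-period lattice minimum. Your two-sided inequality argument is exactly the content of that lemma unpacked in situ: your ``$\leq$'' direction is the feasibility of the concatenated certificate, and your ``$\geq$'' direction is the inclusion $\mathcal U_t(h_t,\Phi_{t+1}^*)\subset\mathcal U_t(h_t,\varrho_{\mathcal U,t+1:T}(X_{t+1:T}))$, which is the same monotonicity step the lemma's proof uses. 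What the paper's abstraction buys is reusability (the same lemma is invoked again in Theorems~\ref{thm:Information_recursive} and \ref{thm:History_DP_decomposition}) and a separation of the order-theoretic step from the probabilistic bookkeeping; what your direct version buys is transparency, and in particular it makes explicit where attainment of the tail minima is needed. On that point, you invoke Theorem~\ref{thm:well-defined} (hence Assumption~\ref{assu:compact}) to select tail minimizers, whereas Theorem~\ref{thm:recursive} as stated assumes only Assumption~\ref{assu:one-step}; the paper has the same implicit reliance hidden inside the definition of $\sqmin$ (which is only defined when the pointwise minimum is attained), so this is a shared, minor issue given finiteness of $\Omega$, not a gap you introduced. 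Your part (ii) matches the paper's induction essentially step for step.
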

\noindent
This result is related to the construction of nested risk measures from a composition of one-step risk measures (see, e.g., \cite{cheridito2006dynamic}).
It shows that the tail risk evaluation can be decomposed into a function of the current period payoff and tail payoffs.
However, the composition above allows general dependence on the next period risk evaluation through $\mathcal U_t(h_t, \cdot)$ (which depends on the entire distribution of $\varrho_{\mathcal U, t+1 : T}$).

\begin{example}
For the temporal decomposition of $\varrho_{\mathcal A, \mathcal C, \phi}$: let $\mathcal A_t:\mathbb{H}_t\rightrightarrows \mathcal L_{t+1}$ be correspondences for all $t \in [T]$ such that $\mathcal A = \{X_{0:T} : [X_t \vert h_t] \in \mathcal A_{t}(h_t),\, \forall t \in [T]\}$; and let $\mathcal C_t:\mathbb{H}_t\rightrightarrows \mathcal L_{t}$ be correspondences for all $t \in [T]$ such that $\mathcal C = \{Z_{0:T} : [Z_{t} \vert h_t] \in \mathcal C_{t}(h_t),\, \forall t \in [T]\}$.
Additionally, suppose there exist $\phi_t:\Re\to\Re$ for all $t \in [T]$ such that $\phi(Z_{0:T}) = \mathbb E [ \sum_{t=0}^T \phi_t(Z_t)]$ (the premium is additively separable).
We have $\Phi_{T+1} \equiv 0$, so the risk frontier for $\varrho_{\mathcal A, \mathcal C, \phi}$ is determined by the constraints: $[X_t + Z_t \vert h_t] \in \mathcal A_t(h_t)$; $[Z_t \vert h_t] \in \mathcal C_t(h_t)$; and $\Phi_{t}(h_t) \geq \mathbb E[\phi_t(Z_t) + \Phi_{t+1} \vert h_t]$ for all $h_t \in \mathbb{H}_t$ and $t \in [T]$.
The resulting $\mathcal U_{t}(h_t, \Phi_{t+1})$ are then determined by the constraints: $[X_t \vert h_t] + z_t \in \mathcal A_t(h_t)$, $z_t \in \mathcal C_t(h_t)$, and $\varphi_t \geq \mathbb E[\phi_t(z_t) + \Phi_{t+1} \vert h_t]$. The tail risk frontiers are automatically in the recursive form of Assumption~\ref{assu:one-step}.
\end{example}

\subsection{Information State}

If the payoff stream $X_{0:T}$ has general history dependence, even the risk-neutral DP recursion is generally intractable.
There is also history dependence inherent in $\mathcal U$ due to DM's risk preferences, so for any $(X_{0:T}, Z_{0:T}, \Phi_{0:T}) \in \mathcal U$ we expect the associated $(Z_{0:T}, \Phi_{0:T})$ to have history dependence as well.

We need a compact representation of the problem history to be used in augmented DP.
We start by making the following assumptions on the data process $\{\xi_t\}_{t \in [T]}$ and the payoff streams to simplify their history dependence.
\begin{assumption}
\label{assu:markov}
(i) $\{\xi_t\}_{t \in [T]}$ are independent, and $\xi_t$ is distributed according to $P_t \in \mathcal P(\Xi)$ for all $t \in [T]$.

(ii) $X_t$ only depends on $s_t$ for all $t \in [T]$.
\end{assumption}
\noindent
Under Assumption~\ref{assu:markov}(i), $\{s_t\}_{t \in [T]}$ is a Markov chain on $\mathbb{S}$.
We then let $\mathbb P = P_0 \times P_1 \times \cdots \times P_T$ to obtain the probability space $(\Omega,\mathscr{F},\mathbb{P})$.
Under Assumption~\ref{assu:markov}(ii), the payoff distribution in period $t \in [T]$ is $[X_t \vert s_t]$.

Even when Assumption~\ref{assu:markov} prevails and $X_{0:T}$ is Markov, the associated $(Z_{0:T}, \Phi_{0:T})$ for some $(X_{0:T}, Z_{0:T}, \Phi_{0:T}) \in \mathcal U$ may still have complex history dependence.
However, in many dynamic risk models, there is some essential information that succinctly captures the history dependence of DM's risk preferences.
For example, preferences could depend only on the cumulative wealth, cumulative target shortfall, or current risk budget (rather than the entire problem history).
In such cases there is a compact sufficient statistic for the risk preferences.
This is analogous to the motivation of the ``information state'' framework developed in \cite{subramanian2022approximate} for risk-neutral DP.

We now extend the information state framework of \cite{subramanian2022approximate} to GCRs.
Let $\{\mathbb Y_t\}_{t \in [T+1]}$ be a pre-specified collection of Banach spaces, where $\mathbb Y_t$ contains the essential information about DM's risk preferences up to the beginning of period $t \in [T+1]$ (and $\mathbb Y_{T+1}$ corresponds to any terminal conditions).
Under Assumption~\ref{assu:markov}, we absorb $\mathbb{S} \subset \mathbb Y_t$ for all $t \in [T+1]$ to capture the dependence of the Markov payoff stream on the state.

We summarize the problem history using a collection of history compression functions $\{\sigma_t\}_{t \in [T+1]}$ where $\sigma_t : \mathbb{H}_t \rightarrow \mathbb Y_t$ for all $t \in [T+1]$. For each period $t \in [T+1]$, $\sigma_t$ summarizes the relevant historical information as a sufficient statistic on $\mathbb Y_t$.
We recall that $h_t$ implicitly determines the history of payoffs $[X_{0:t-1} \vert h_t]$ and disbursements $[Z_{0:t-1} \vert h_t]$ under Assumption~\ref{assu:recall}, which are inputs into the history compression functions.
We let $\mathcal G_t \triangleq \sigma_t(\mathcal F_t)$ be the corresponding $\sigma-$field on $\mathbb Y_t$ for all $t \in [T+1]$, and we let $\bar{\mathcal L}_t = \mathcal L^{\infty}(\Omega, \mathcal G_t, \mathbb P)$ be the set of real-valued $\mathcal G_t-$measurable functions for all $t \in [T+1]$. Since $\mathcal G_t \triangleq \sigma_t(\mathcal F_t)$ by construction, we have $\bar{\mathcal L}_t \subset \mathcal L_t$.
We let $Y_t \in \bar{\mathcal L}_t$ be the information state in period $t \in [T+1]$.
We then have $X_t(h_t) = X_t(\sigma_t(h_t))$ for all $h_t \in \mathbb{H}_t$ for any $X_t \in \bar{\mathcal{L}}_t$.

To continue, we introduce modified disbursement strategies $\bar Z_{0:T} \in \bar{\mathcal L}_{0:T}$ and premium schedules $\bar{\Phi}_{0:T} \in \bar{\mathcal L}_{0:T}$ just defined on the information state.
We also take $\bar{\Phi}_{T+1} \in \bar{\mathcal L}_{T+1}$ as an input corresponding to the terminal period.
Given a Markov payoff stream $X_{0:T}$, we want to evaluate the risk of $X_{0:T}$ just using $(\bar Z_{0:T}, \bar{\Phi}_{0:T}) \in \bar{\mathcal L}_{0:T} \times \bar{\mathcal L}_{0:T}$ to get a more tractable GCR.
The next definition formalizes the key requirements of an information state for a GCR.

\begin{defn}
\label{defn:information}
(Information state generator for GCR) $\mathcal I = \{ (\mathbb Y_t, \sigma_t) \}_{t \in [T+1]}$ is an information state generator for $\varrho_{\mathcal U}$ when the following criteria are satisfied:
\begin{itemize}
    \item[(i)] Sufficient to predict itself: ${\rm Pr}(Y_{t+1} \in B \vert h_t, z_t) = {\rm Pr}(Y_{t+1} \in B \vert \sigma_t(h_t), z_t)$ for all $t \in [T-1]$, $B \in \mathcal G_{t+1}$, $h_t \in \mathbb{H}_t$, and $z_t \in \Re$.
    \item[(ii)] Sufficient to characterize terminal conditions: $\bar{\Phi}_{T+1} = \Phi_{T+1} \in \bar{\mathcal L}_{T+1}$.
    \item[(iii)] Sufficient to characterize the risk frontier: $\mathcal U_t(h_t, \bar{\Phi}_{t+1}) = \mathcal U_t(h_t', \bar{\Phi}_{t+1})$ for $\bar{\Phi}_{t+1} \in \bar{\mathcal L}_{t+1}$ and all $h_t, h_t' \in \mathbb{H}_t$ such that $\sigma_t(h_t) = \sigma_t(h_t') = y_t$ for all $t\in[T]$.
\end{itemize}
\end{defn}
\noindent
We model the evolution of the information state with transition functions $g_t : \mathbb Y_t \times \Re \times \Xi \rightarrow \mathbb Y_{t+1}$ where $y_{t+1} = g_t(y_t, z_t, \xi_t)$ for all $t \in [T]$.

Under Definition~\ref{defn:information}, we let $\bar{\mathcal U} = \mathcal U \cap \{\bar{\mathcal L}_{1:T+1} \times \bar{\mathcal L}_{0:T} \times \bar{\mathcal L}_{0:T}\}$ be the subset of triples within the risk frontier which are Markov with respect to the information state.
Definition~\ref{defn:information}(iii) addresses the relationship between the risk frontier and the information state. For each $t \in [T]$, we define correspondences $\bar{\mathcal U}_t:\mathbb Y_t \times \bar{\mathcal L}_{t+1}\rightrightarrows \bar{\mathcal L}_{t+1} \times \Re \times \Re$ via
$$
\bar{\mathcal U}_t(y_t, \bar{\Phi}_{t+1}) \triangleq \mathcal U_t(h_t, \bar{\Phi}_{t+1}) \text{ s.t. } \sigma_t(h_t) = y_t,
$$
for all $\bar{\Phi}_{t+1} \in \bar{\mathcal L}_{t+1}$. We refer to $\{\bar{\mathcal U}_{t}\}_{t \in [T]}$ as the {\it compressed risk frontiers}, they give a temporal decomposition of $\bar{\mathcal U}$ on the information state:
\begin{equation}
\label{eq:bar_risk_frontier_recursive}
\bar{\mathcal U} = \{(X_{0:T}, \bar Z_{0:T}, \bar{\Phi}_{0:T}) 
 \text{ : } [(X_{t}, \bar Z_{t}, \bar{\Phi}_{t}) \vert y_t] \in \bar{\mathcal U}_{t}(y_t, \bar{\Phi}_{t+1}),\, \forall y_t \in \mathbb Y_t,\, \forall t \in [T]\}.
\end{equation}

The following theorem establishes the sufficiency of $\mathcal I$ and $\bar{\mathcal U}$ for the purpose of evaluating the GCR for Markov $X_{0:T}$.
Since $X_{0:T}$ is Markov on $\mathbb{S}$ only, any additional history dependence in $(\bar Z_{0:T}, \bar{\Phi}_{0:T})$ is solely due to DM's risk preferences.
\begin{thm}
\label{thm:Information_sufficiency}
Suppose $\mathcal I$ is an information state generator for $\varrho_{\mathcal U}$ and let $X_{0:T} \in \bar{\mathcal L}_{0:T}$.

(i) If $(X_{0:T}, Z_{0:T}, \Phi_{0:T}) \in \mathcal U$, then there exists $(X_{0:T}, \bar Z_{0:T}, \bar{\Phi}_{0:T}) \in \bar{\mathcal U}$ such that $\bar{\Phi}_{0:T} \leq \Phi_{0:T}$.

(ii) $\varrho_{\bar{\mathcal U}}(X_{0:T}) = \varrho_{\mathcal U}(X_{0:T})$.
\end{thm}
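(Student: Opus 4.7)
The proof has two parts. Part (ii) reduces quickly to part (i), so the substantive work is building, from an arbitrary acceptable triple, a compressed one with a pointwise-smaller premium schedule.

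For part (i), the plan is a backward induction on $t$, constructing $\bar{Z}_t \in \bar{\mathcal{L}}_t$ and $\bar{\Phi}_t \in \bar{\mathcal{L}}_t$ one period at a time. The base case $t = T+1$ is immediate from Definition~\ref{defn:information}(ii): take $\bar{\Phi}_{T+1} = \Phi_{T+1}$, which already lies in $\bar{\mathcal{L}}_{T+1}$. For the inductive step at time $t$, assume $\bar{\Phi}_{t+1} \in \bar{\mathcal{L}}_{t+1}$ with $\bar{\Phi}_{t+1} \leq \Phi_{t+1}$ is in hand. For each $y_t \in \mathbb{Y}_t$, the preimage $\{h_t \in \mathbb{H}_t : \sigma_t(h_t) = y_t\}$ is finite by Assumption~\ref{assu:preliminaries}, so I select a minimizer $h_t^\ast(y_t) \in \arg\min_{\sigma_t(h_t) = y_t} \Phi_t(h_t)$ and define $\bar{Z}_t(y_t) := Z_t(h_t^\ast(y_t))$ together with $\bar{\Phi}_t(y_t) := \Phi_t(h_t^\ast(y_t))$. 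By construction both are $\mathcal{G}_t$-measurable, and $\bar{\Phi}_t(\sigma_t(h_t)) \leq \Phi_t(h_t)$ for every $h_t$, supplying the required pointwise bound.

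The key verification is that the compressed triple remains in the tail risk frontier. Since $(X_{0:T}, Z_{0:T}, \Phi_{0:T}) \in \mathcal{U}$, Assumption~\ref{assu:one-step} gives $[(X_t, Z_t, \Phi_t) \vert h_t^\ast] \in \mathcal{U}_t(h_t^\ast, \Phi_{t+1})$; Assumption~\ref{assu:one-step}(iii) applied to $\bar{\Phi}_{t+1} \leq \Phi_{t+1}$ enlarges the right-hand side so that $[(X_t, Z_t, \Phi_t) \vert h_t^\ast] \in \mathcal{U}_t(h_t^\ast, \bar{\Phi}_{t+1})$; and Definition~\ref{defn:information}(iii), which is applicable because $\bar{\Phi}_{t+1} \in \bar{\mathcal{L}}_{t+1}$, identifies this set with $\bar{\mathcal{U}}_t(y_t, \bar{\Phi}_{t+1})$. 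The remaining subtlety is transferring this inclusion from the representative $h_t^\ast$ to an arbitrary $h_t$ with $\sigma_t(h_t) = y_t$: because $X_{0:T}$ is Markov on the information state, the conditional distribution $[X_t \vert h_t]$ depends on $h_t$ only through $y_t$ under Assumption~\ref{assu:markov} and Definition~\ref{defn:information}(i), so $[X_t \vert h_t]$ and $[X_t \vert h_t^\ast]$ coincide; and Definition~\ref{defn:information}(iii) already flattens $\mathcal{U}_t(\cdot, \bar{\Phi}_{t+1})$ across the fiber $\sigma_t^{-1}(y_t)$. Assembling the inclusions across $t$ and $y_t$ and invoking the recursive characterization in Eq.~\eqref{eq:bar_risk_frontier_recursive} delivers $(X_{0:T}, \bar{Z}_{0:T}, \bar{\Phi}_{0:T}) \in \bar{\mathcal{U}}$.

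Part (ii) then follows easily. Since $\bar{\mathcal{U}} \subseteq \mathcal{U}$, minimizing over a smaller feasible set yields $\varrho_{\bar{\mathcal{U}}}(X_{0:T}) \geq \varrho_{\mathcal{U}}(X_{0:T})$. Conversely, given any feasible $(X_{0:T}, Z_{0:T}, \Phi_{0:T}) \in \mathcal{U}$, part (i) produces a compressed triple whose initial premium $\bar{\varphi}_0$ satisfies $\bar{\varphi}_0 \leq \varphi_0$; taking infima over both sides yields $\varrho_{\bar{\mathcal{U}}}(X_{0:T}) \leq \varrho_{\mathcal{U}}(X_{0:T})$, completing the equality.

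The main obstacle is the transfer step in the induction, where an inclusion proved at one representative history $h_t^\ast(y_t)$ must be upgraded to every other history with the same $y_t$. This is the place where all three clauses of Definition~\ref{defn:information} must act in concert: clause (ii) seeds the backward recursion, clause (iii) collapses the dependence of the tail risk frontier on $h_t$ once its second argument lies in $\bar{\mathcal{L}}_{t+1}$, and clause (i) together with Assumption~\ref{assu:markov} guarantees that the conditional distribution of the payoff is also Markov on the information state, so that Assumption~\ref{assu:one-step}(iii) remains applicable at every inductive step.
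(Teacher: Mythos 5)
Your proposal is correct and follows essentially the same route as the paper's proof: a backward induction that, for each $y_t$, selects a premium-minimizing representative history in the fiber $\sigma_t^{-1}(y_t)$, uses Assumption~\ref{assu:one-step}(iii) to pass from $\mathcal U_t(h_t,\Phi_{t+1})$ to $\mathcal U_t(h_t,\bar\Phi_{t+1})$, and invokes Definition~\ref{defn:information}(ii)--(iii) to collapse the fiber, with part (ii) following from $\bar{\mathcal U}\subset\mathcal U$ plus the compression of part (i). Your explicit discussion of the transfer step from $h_t^\ast(y_t)$ to an arbitrary history in the fiber is a welcome elaboration of a point the paper leaves implicit, but it is not a different argument.
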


We can construct the tail risk frontiers on the information state as follows:
\begin{equation*}
\bar{\mathcal U}_{t:T}(y_t) = \{(X_{t:T}, \bar Z_{t:T}, \bar{\Phi}_{t:T}) 
 \text{ : } [(X_{k}, \bar Z_{k}, \bar{\Phi}_{k}) \vert y_k] \in \bar{\mathcal U}_{k}(y_k, \bar{\Phi}_{k+1}),\, \forall y_k \in \mathbb Y_k,\, \forall k \in [t, T]\}.
\end{equation*}
For all $t \in [T]$, we then define $\varrho_{\bar{\mathcal U}, t:T} : \bar{\mathcal L}_{t+1:T+1} \rightarrow \bar{\mathcal L}_t$ by:
\[
    [\varrho_{\bar{\mathcal U}, t:T}(X_{t:T})](y_t) \triangleq \min_{\bar{Z}_{t:T}, \bar{\Phi}_{t:T}} \{ \bar{\Phi}_t(y_t) : [(X_{t:T}, \bar{Z}_{t:T}, \bar{\Phi}_{t:T}) \vert y_t] \in \bar{\mathcal U}_{t:T}(y_t) \},\, \forall y_t \in \mathbb{Y}_t,
\]
for all $X_{t:T} \in \bar{\mathcal L}_{t:T}$.
The resulting dynamic risk measure induced by $\bar{\mathcal U}$ is then $\{\varrho_{\bar{\mathcal U}, t:T}\}_{t \in [T]}$.
We take $\varrho_{\bar{\mathcal U}, T+1 : T} = \bar{\Phi}_{T+1}$ for the terminal period, and obtain the following recursion on the information state the risk evaluation of Markov payoff streams.

\begin{thm}
\label{thm:Information_recursive}
Suppose Assumption~\ref{assu:one-step} holds and $\mathcal I$ is an information state generator for $\varrho_{\mathcal U}$.
Then $\{\varrho_{\bar{\mathcal U}, t:T}\}_{t \in [T]}$ has a recursive decomposition on the information state. For all $t \in [T]$ and $X_{t:T} \in \bar{\mathcal L}_{t+1:T+1}$, we have:
\begin{equation}\label{eq:Information_recursive}
[\varrho_{\bar{\mathcal U}, t:T}(X_{t:T})](\sigma_t(h_t)) = \min_{z_t, \varphi_t} \{\varphi_t : ([X_t \vert s_t], z_t, \varphi_t) \in \bar{\mathcal U}_t(\sigma_t(h_t), \varrho_{\bar{\mathcal U}, t+1 : T}(X_{t+1:T}))\},\, \forall h_t \in \mathbb{H}_t. 
\end{equation}
\end{thm}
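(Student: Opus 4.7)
The plan is to transport the argument of Theorem~\ref{thm:recursive} to the compressed setting: $(\bar{\mathcal U}, \{\bar{\mathcal U}_t\}_{t\in[T]})$ satisfies an exact analogue of Assumption~\ref{assu:one-step} on the information state, so the recursion Eq.~\eqref{eq:recursive} translates into Eq.~\eqref{eq:Information_recursive} by the same interchange-of-minimization argument, with $(h_t, \Phi_{t+1})$ replaced by $(y_t, \bar{\Phi}_{t+1})$.

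First, I would verify that the recursive structure lifts from $\mathcal U$ to $\bar{\mathcal U}$. Definition~\ref{defn:information}(iii) makes $\bar{\mathcal U}_t(y_t, \bar{\Phi}_{t+1}) = \mathcal U_t(h_t, \bar{\Phi}_{t+1})$ unambiguous for any $h_t$ with $\sigma_t(h_t) = y_t$, and since $\bar{\mathcal L}_{t+1} \subset \mathcal L_{t+1}$ any $\bar{\Phi}_{t+1}$ is a legitimate argument. Assumption~\ref{assu:one-step}(i) together with this identification gives the compressed decomposition Eq.~\eqref{eq:bar_risk_frontier_recursive}, and the monotonicity in $X_t$ and the antitonicity in $\bar{\Phi}_{t+1}$ of $\bar{\mathcal U}_t(y_t,\cdot)$ are inherited from Assumption~\ref{assu:one-step}(ii)(iii). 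Assumption~\ref{assu:markov}(ii) additionally lets me rewrite $[X_t \vert h_t]$ as $[X_t \vert s_t]$ in the period-$t$ inclusion.

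Next I would split the constraints defining $\bar{\mathcal U}_{t:T}(y_t)$ into the $k=t$ inclusion and those for $k \in [t+1, T]$, and carry out the minimization over $(\bar Z_{t+1:T}, \bar{\Phi}_{t+1:T})$ first. By antitonicity of $\bar{\mathcal U}_t(y_t,\cdot)$ in its second argument, decreasing $\bar{\Phi}_{t+1}$ pointwise only enlarges the feasible set for the period-$t$ inclusion, so the inner problem reduces to minimizing $\bar{\Phi}_{t+1}(y_{t+1})$ pointwise in $y_{t+1}$ subject to the $[t+1, T]$ constraints. By the very definition of $\varrho_{\bar{\mathcal U}, t+1:T}$, this inner minimum is attained at $\bar{\Phi}_{t+1} = \varrho_{\bar{\mathcal U}, t+1:T}(X_{t+1:T})$; substituting back leaves precisely the scalar minimization over $(z_t, \varphi_t)$ displayed in Eq.~\eqref{eq:Information_recursive}.

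The main obstacle is justifying the interchange of minimizations rigorously and confirming that the pointwise-optimal tail premium assembles into a bona fide element of $\bar{\mathcal L}_{t+1}$ rather than merely of $\mathcal L_{t+1}$. This is the same lattice-theoretic content invoked in Theorem~\ref{thm:recursive}: closedness of $\bar{\mathcal U}$ from Definition~\ref{defn:risk_frontier}(iv) together with Assumption~\ref{assu:compact} provides existence of minimizers and the monotonicity structure of Definition~\ref{defn:risk_frontier} gives that directed infima over tail premiums are attained, while Definition~\ref{defn:information}(i) (the Markov property of $Y_{t+1}$ given $(h_t, z_t)$) is what ensures that the collection of pointwise minimizers over $y_{t+1}$ is $\mathcal G_{t+1}$-measurable rather than merely $\mathcal F_{t+1}$-measurable. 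Once that measurability hurdle is cleared, the rest of the argument is a direct translation of the proof of Theorem~\ref{thm:recursive}.
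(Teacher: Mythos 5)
Your proposal matches the paper's proof: the paper likewise invokes Definition~\ref{defn:information}(iii) to identify $\mathcal U_t(h_t,\bar{\Phi}_{t+1})$ with $\bar{\mathcal U}_t(y_t,\bar{\Phi}_{t+1})$, defines the analogues $\bar{\mathcal W}_t(\bar{\Phi}_{t+1})$ of the sets from the proof of Theorem~\ref{thm:recursive} on the information state, and repeats the lattice-minimum interchange argument by backward induction. Your measurability worry is handled in the paper simply by restricting the optimization in $\varrho_{\bar{\mathcal U},t:T}$ to $\bar{\mathcal L}$-valued processes from the outset (which is why Theorem~\ref{thm:Information_sufficiency} is proved separately), but this does not change the substance of the argument.
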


\begin{example}
Suppose $\mathcal I$ is an information state generator for $\varrho_{\mathcal A, \mathcal C, \phi}$.
Then, there are correspondences: $\bar{\mathcal A}_t:\mathbb Y_t\rightrightarrows \bar{\mathcal L}_{t+1}$ such that $\bar{\mathcal A}_t(y_t) = \bar{\mathcal A}_t(\sigma_t(h_t)) = \mathcal A_t(h_t)$; and $\bar{\mathcal C}_t:\mathbb Y_t\rightrightarrows \bar{\mathcal L}_t$ such that $\bar{\mathcal C}_t(y_t) = \bar{\mathcal C}_t(\sigma_t(h_t)) = \mathcal C_t(h_t)$.
Let $\bar{\Phi}_{T+1} \equiv 0$, then $\bar{\mathcal U}_t(y_t, \bar{\Phi}_{t+1})$ are determined by the constraints: $\varphi_t \geq \phi_t(y_t, z_t) + \mathbb E[\bar{\Phi}_{t+1} \vert y_t, z_t]$; $[X_t \vert s_t] + z_t \in \bar{\mathcal A}_t(y_t)$; and $z_t \in \bar{\mathcal C}_t(y_t)$.
\end{example}

\section{GCR-MDP Formulation}\label{sec:problem}

In this section, we consider an MDP which generates a payoff stream.
We want to find a policy that minimizes the GCR which reflects DM's risk preferences.

The action space is $\mathbb{A}$, and $a_{t} \in \mathbb A$ is the action in period $t \in [T]$.
The transition law now depends on the state and action and is given by $f_t : \mathbb{S} \times \mathbb A \times \Xi \rightarrow \mathbb{S}$ for all $t \in [T-1]$.
The state transitions then follow $s_{t+1} = f_t(s_t,a_t,\xi_t)$ for all $t \in [T-1]$.
The payoffs $r_t\text{ : }\mathbb{S} \times \mathbb{A} \times \Xi \rightarrow \Re$ for $t \in [T]$ also depend on the state and action.
We let $R_t(s_t, a_t)$ denote the random payoff in period $t$ with realizations defined by $[R_t(s_t, a_t)](\xi_t) = r_t(s_t, a_t, \xi_t)$ for all $\xi_t \in \Xi$.

We make the following basic assumptions on the underlying MDP.
\begin{assumption}
\label{assu:MDP}
(i) $\mathbb A$ is finite.

(ii) $|r_t(s, a, \xi)| \leq R_{\max} < \infty$ for all $(s, a, \xi) \in \mathbb{S} \times \mathbb{A} \times \Xi$ and $t \in [T]$.
\end{assumption}
\noindent
Under Assumptions~\ref{assu:preliminaries} and \ref{assu:MDP}, the state and action spaces are both finite.

We modify the history to account for the actions of the MDP, where $h_t = (s_0, a_0, \xi_0, \ldots, a_{t-1}, \xi_{t-1})$ for all $t \in [T]$, and $h_{T+1} = (s_0, a_0, \xi_0, \ldots, a_{T}, \xi_{T})$.
In this setting, DM's policy covers both operational decisions which generate the payoff stream as well as the disbursement strategy which is part of the GCR. We let $\pi=\left(\pi_{t}\right)_{t\in[T]}$ be a sequence of mappings $\pi_t : \mathbb{H}_t \rightarrow \mathbb A \times \mathbb R$ for all $t \in [T]$, and we let $\Pi$ denote the set of all such mappings.
We write $\pi_t = (\pi_t^a, \pi_t^z)$ where $\pi_t^a : \mathbb{H}_t \rightarrow \mathbb A$ is the operational policy and $\pi_t^z : \mathbb{H}_t \rightarrow \Re$ is the financial policy.
For a given policy $\pi \in \Pi$, DM's payoff stream is $R_{0:T}^\pi = (R_0^{\pi}, \ldots, R_T^{\pi}) \in \mathcal L_{1:T+1}$, where $R_t^{\pi}(h_t, \xi_t) \triangleq r_t(s_t, \pi_t^a(h_t), \xi_t)$ for all $h_t \in \mathbb{H}_t$, $\xi_t \in \Xi$, and $t \in [T]$.
The disbursement strategy is $Z_{0:T}^\pi = (Z_0^\pi, \ldots, Z_T^\pi) \in \mathcal L_{0:T}$, where $Z_t^\pi(h_t) = \pi_t^z(h_t)$ for all $h_t \in \mathbb{H}_t$ and $t \in [T]$.

Let $\mathcal U$ be DM's risk frontier for $R_{0:T}^\pi$, then we define the GCR-MDP:
\begin{equation}
\label{MDP_CR}
    \min_{\pi \in \Pi} \varrho_{\mathcal U}(R_{0:T}^\pi) \equiv \min_{\pi \in \Pi,\, \Phi_{0:T}} \left\{ \varphi_0 : (R_{0:T}^\pi, Z_{0:T}^\pi, \Phi_{0:T}) \in \mathcal U \right\}.
\end{equation}
We treat the premium schedule $\Phi_{0:T}$ as part of DM's decision variables in Eq.~\eqref{MDP_CR} in addition to the policy $\pi \in \Pi$ (which determines $R_{0:T}^\pi$ and $Z_{0:T}^\pi$).
The optimal premium schedule is uniquely determined by the minimization objective for a given $\pi \in \Pi$.
We denote an optimal policy for Eq.~\eqref{MDP_CR} as $\pi^* \in \Pi$ (which is not necessarily unique).

Earlier we considered time consistency of a GCR in terms its dynamic risk measures.
We can consider time consistency of a GCR-MDP in terms of its optimal policies.

\begin{defn}
\label{defn:time_consistency_policy}
An optimal policy $\pi^*$ is time-consistent for Eq.~\eqref{MDP_CR} if $\pi_{t:T}^* = (\pi_t^*, \ldots, \pi_T^*)$ is optimal for $V_t(h_t)$ for all $h_t \in \mathbb{H}_t$ and $t \in [T]$.
\end{defn}

\subsection{Dynamic Programming on Histories}
We first develop the general DP decomposition of Eq.~\eqref{MDP_CR} and identify an optimal policy.
For each $t \in [T+1]$, we let $\mathcal V_t$ be the space of bounded value functions $V : \mathbb{H}_t \rightarrow \Re$ defined on the history $\mathbb{H}_t$, equipped with the supremum norm.
We take $V_{T+1} \in \mathcal V_{T+1}$ as an input.
Given $\pi \in \Pi$, we let $\pi_{t:T} = (\pi_t, \ldots, \pi_T)$ denote the tail policy starting from period $t \in [T]$.
We define the value functions
\[
    V_t^{\pi_{t:T}}(h_t) \triangleq \min_{\Phi_{t:T}} \{\Phi_t(h_t) : [(R_{t:T}^{\pi_{t:T}}, Z_{t:T}^{\pi_{t:T}},\Phi_{t:T}) \vert h_t] \in \mathcal U_{t:T}(h_t)\},
\]
corresponding to the tail policy $\pi_{t:T}$ conditional on $h_t \in \mathbb{H}_t$.
Then for each period $t \in [T]$ we define $V_t \in \mathcal V_t$ via:
\[
    V_t(h_t) \triangleq \min_{\pi_{t:T}} V_t^{\pi_{t:T}}(h_t),\, \forall h_t \in \mathbb{H}_t,
\]
which optimizes over the tail policy $\pi_{t:T}$ conditional on $h_t$.
We interpret $V_t(h_t)$ as the optimal total cost of the remaining premiums in periods $[t, T]$, conditional on history $h_t \in \mathbb{H}_t$.
We will establish that $\{V_t\}_{t=0}^{T+1}$ are the optimal value functions for Eq.~\eqref{MDP_CR}.

In each period $t \in [T]$, we will suppose that DM directly optimizes over $(X_t, z_t, \varphi_t)$ and the next period premium $\Phi_{t+1} \in \mathcal L_{t+1}$.
The risk frontier $\mathcal U_t(h_t, \Phi_{t+1})$ captures the requirements on $(X_t, z_t, \varphi_t)$ due to the GCR.
We keep the optimization over $a_t \in \mathbb A$ implicit to emphasize the dynamics of the GCR, where $X_t$ is understood as a dummy variable for $R_t$, and $\Phi_{t+1}$ is understood as a dummy variable for $V_{t+1}$.
For $(X_t, \Phi_{t+1})$ to be achievable for the underlying MDP, it must be almost surely dominated by the actual reward and cost-to-go.
Let $\mathcal D_t : \mathbb{H}_t \times \mathcal L_{t+1} \rightrightarrows \mathcal L_{t+1} \times \Re \times \mathcal L_{t+1}$ be the correspondence for the dominance constraints defined by:
\begin{align*}
   \mathcal D_t(h_t, V_{t+1}) \triangleq & \Big\{(X_t, z_t, \Phi_{t+1}) : \exists a \in \mathbb{A} \text{ s.t. } X_t(h_t, a, \xi_t) \leq r_t(s_t, a, \xi_t),\\
   & V_{t+1}(h_t, a, \xi_t) \leq \Phi_{t+1}(h_t, a, \xi_t),\, \forall \xi_t \in \Xi \Big\}.
\end{align*}
Then, we define correspondences $\mathcal Q_t : \mathbb{H}_t \times \mathcal L_{t+1} \rightrightarrows \mathcal L_{t+1} \times \Re \times \Re$ via:
\begin{align*}
   \mathcal Q_t(h_t, V_{t+1}) \triangleq & \Big\{(X_t, z_t, \varphi_t) \in \mathcal U_t(h_t, \Phi_{t+1}) : (X_t, z_t, \Phi_{t+1}) \in \mathcal D_t(h_t, V_{t+1}) \Big\}
\end{align*}
to account for the requirements of the GCR (through the compressed risk frontier) and for the underlying MDP (through the dominance constraints).
Additionally, $\mathcal Q_t(h_t, V_{t+1})$ encodes the dependence of both the current period payoff and the next period cost-to-go on the current action $a_t \in \mathbb A$.

In the next theorem, we derive the DP recursion for the GCR-MDP under the recursive structure of Assumption~\ref{assu:one-step}.
This result is based on lattice-theoretic arguments to justify the interchange of the order of the minimization and the inclusion in the risk frontier.

\begin{thm}
\label{thm:History_DP_decomposition}
Suppose Assumption~\ref{assu:one-step} holds. The value functions $\{V_t\}_{t=0}^{T+1}$ satisfy the decomposition:
\begin{equation}
\label{eqn:Vthistory}
    V_t(h_t) = \min_{(X_t, z_t, \varphi_t)} \left\{\varphi_t : (X_t, z_t, \varphi_t) \in \mathcal Q_t(h_t, V_{t+1}) \right\}, \forall h_t \in \mathbb{H}_t, \forall t \in [T].
\end{equation}
Furthermore, $V_0(s_0) = \min_{\pi \in \Pi} \varrho_{\mathcal U}(R_{0:T}^\pi)$.
\end{thm}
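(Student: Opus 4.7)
The plan is to proceed by backward induction on $t$, using Theorem~\ref{thm:recursive} to reduce the tail risk evaluation to a one-period problem that then couples to $V_{t+1}$. The base case $t=T+1$ is immediate since $V_{T+1}$ is an input. For the inductive step, start from $V_t(h_t) = \min_{\pi_{t:T}} V_t^{\pi_{t:T}}(h_t)$, split the tail policy as $\pi_{t:T} = (\pi_t, \pi_{t+1:T})$, and apply Theorem~\ref{thm:recursive} to rewrite
\[
V_t^{\pi_{t:T}}(h_t) = \min_{z_t, \varphi_t}\{\varphi_t : ([R_t^{\pi_t}\vert h_t], z_t, \varphi_t) \in \mathcal U_t(h_t, \varrho_{\mathcal U, t+1:T}(R_{t+1:T}^{\pi_{t+1:T}}))\},
\]
where $z_t = \pi_t^z(h_t)$ and $a_t = \pi_t^a(h_t)$ determines $R_t^{\pi_t}$.

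The key step is interchanging the minimization over $\pi_{t+1:T}$ with the inner one-period minimization over $(z_t,\varphi_t)$. The tail policy enters the current-period constraint only through the random variable $\Phi_{t+1} = \varrho_{\mathcal U, t+1:T}(R_{t+1:T}^{\pi_{t+1:T}})$. By Assumption~\ref{assu:one-step}(iii), $\Phi_{t+1}\le\Phi_{t+1}'$ implies $\mathcal U_t(h_t,\Phi_{t+1}')\subset\mathcal U_t(h_t,\Phi_{t+1})$, so making $\Phi_{t+1}$ smaller can only enlarge the feasible set and (weakly) decrease the attainable $\varphi_t$. It therefore suffices to minimize $\Phi_{t+1}$ pointwise over each continuation $h_{t+1}$; by the inductive hypothesis this pointwise minimum is exactly $V_{t+1}(h_{t+1})$. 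Because policies are history-dependent, we can paste together pointwise minimizers on each continuation branch to exhibit a single tail policy $\pi_{t+1:T}^*$ whose induced $\Phi_{t+1}$ equals $V_{t+1}$; this is the lattice-theoretic construction alluded to in the theorem statement, and it turns the infimum of $\Phi_{t+1}$ over $\pi_{t+1:T}$ into an attained minimum.

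Once $\Phi_{t+1}$ is fixed at $V_{t+1}$, I relax the equalities $X_t = R_t^{\pi_t}$ and $\Phi_{t+1} = V_{t+1}$ to dominance inequalities, as encoded by $\mathcal D_t(h_t, V_{t+1})$. Assumption~\ref{assu:one-step}(ii) guarantees that if $X_t\le R_t^{\pi_t}$ is feasible in $\mathcal U_t$, then the original payoff is too, and Assumption~\ref{assu:one-step}(iii) plays the analogous role for $\Phi_{t+1}\ge V_{t+1}$; the infimum is then attained with both inequalities tight on some $a_t\in\mathbb A$. Combining the constraint $(X_t,z_t,\varphi_t)\in\mathcal U_t(h_t,\Phi_{t+1})$ with the dominance constraint gives exactly $\mathcal Q_t(h_t,V_{t+1})$, yielding (\ref{eqn:Vthistory}). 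The final identity $V_0(s_0)=\min_{\pi\in\Pi}\varrho_{\mathcal U}(R_{0:T}^\pi)$ follows by unrolling the recursion at $t=0$: the outer minimization over $\pi$ coincides with the outer minimization in Definition~\ref{defn:GCR}, and at the root there is no prior history beyond $s_0$.

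The main obstacle will be the measurability/lattice step that pastes pointwise optimal tail policies into one global optimizer while preserving $\Phi_{t+1}=V_{t+1}$ as an attained value rather than merely an infimum; Assumption~\ref{assu:compact} (for attainment of the one-period minima along each branch) and the finiteness of $\mathbb{S}$, $\mathbb{A}$, $\Xi$ under Assumptions~\ref{assu:preliminaries} and \ref{assu:MDP} keep this construction within the essentially bounded spaces $\mathcal L_t$, so the interchange of minima is rigorously justified.
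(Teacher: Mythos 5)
Your proposal is correct and follows essentially the same route as the paper: the paper formalizes your "minimize $\Phi_{t+1}$ pointwise first, then solve the one-period problem" interchange via its lattice-decomposition lemma applied recursively to the correspondences $\mathcal V_t(\Phi_{t+1})$, with the monotonicity in Assumption~\ref{assu:one-step}(iii) doing exactly the work you assign it, and with Assumption~\ref{assu:one-step}(ii)--(iii) justifying the dominance relaxation into $\mathcal Q_t$ just as you describe. The policy-pasting and attainment issues you flag are handled in the paper by the complete-lattice structure of the premium spaces (finiteness of $\Omega$), so no genuine gap remains.
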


Let $\pi^* = (\pi_t^*)_{t=0}^T$ be the greedy policy with respect to $\{V_t\}_{t=0}^T$, where $\pi_t^*$ is defined by:
\begin{subequations}
\label{eqn:Vthistory_policy}
\begin{align}
    \pi_t^{a*}(h_t) = & a_t^*,\\
    R_t(s_t, a_t^*) = & X_t^*,\, a_t^* \in \mathbb{A},\\
    \pi_t^{z*}(h_t) = & z_t^*,\\
    (X_t^*, z_t^*, \varphi_t^*) \in & \argmin_{(X_t, z_t, \varphi_t)} \left\{\varphi_t : (X_t, z_t, \varphi_t) \in \mathcal Q_t(h_t, V_{t+1}) \right\},\, \forall h_t \in \mathbb{H}_t.
\end{align}
\end{subequations}

\begin{thm}
\label{thm:History_policy}
Suppose Assumption~\ref{assu:one-step} holds.

(i) $\pi^*$ is optimal for Eq.~\eqref{MDP_CR}.

(ii) $\pi^*$ is time-consistent.
\end{thm}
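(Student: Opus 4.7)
The plan is to prove both claims simultaneously via a single backward induction on $t$ that establishes the per-period identity $V_t^{\pi_{t:T}^*}(h_t) = V_t(h_t)$ for every $h_t \in \mathbb{H}_t$ and every $t \in [T+1]$, where $V_t$ is the value function from Theorem~\ref{thm:History_DP_decomposition}. Once this identity is in hand, (i) follows because $V_0(s_0) = V_0^{\pi^*}(s_0) = \varrho_{\mathcal{U}}(R_{0:T}^{\pi^*})$ combined with the optimality $V_0(s_0) = \min_\pi \varrho_{\mathcal{U}}(R_{0:T}^\pi)$ from Theorem~\ref{thm:History_DP_decomposition}, and (ii) is exactly Definition~\ref{defn:time_consistency_policy}, since $V_t(h_t) = \min_{\pi_{t:T}} V_t^{\pi_{t:T}}(h_t)$ means the greedy tail $\pi_{t:T}^*$ attains the tail minimum at every $h_t$.

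The base case $t = T+1$ holds trivially because $V_{T+1}$ is a fixed input. For the inductive step, assume $V_{t+1}^{\pi_{t+1:T}^*}(h_{t+1}) = V_{t+1}(h_{t+1})$ for all $h_{t+1}$ and fix $h_t$. The greedy construction \eqref{eqn:Vthistory_policy} selects $a_t^* \in \mathbb{A}$ and $(X_t^*, z_t^*, \varphi_t^*)$ attaining the minimum in \eqref{eqn:Vthistory}, so $\varphi_t^* = V_t(h_t)$ and $(X_t^*, z_t^*, \varphi_t^*) \in \mathcal{Q}_t(h_t, V_{t+1})$; unpacking this membership and invoking Assumption~\ref{assu:one-step}(iii) with the dominance inequality $V_{t+1} \leq \Phi_{t+1}$ absorbs the auxiliary premium and gives $(X_t^*, z_t^*, \varphi_t^*) \in \mathcal{U}_t(h_t, V_{t+1})$. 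The greedy construction also forces $R_t^{\pi^*}(h_t, \cdot) = X_t^*(h_t, a_t^*, \cdot)$ and $Z_t^{\pi^*}(h_t) = z_t^*$, while the inductive hypothesis yields $V_{t+1} = \Phi_{t+1}^{\pi^*}$. Substituting these equalities produces $[(R_t^{\pi^*}, Z_t^{\pi^*}, \varphi_t^*) \vert h_t] \in \mathcal{U}_t(h_t, \Phi_{t+1}^{\pi^*})$ and $\varphi_t^* = V_t(h_t) = V_t^{\pi_{t:T}^*}(h_t)$, closing the induction.

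Aggregating these pointwise inclusions through the risk frontier decomposition in Assumption~\ref{assu:one-step}(i) yields the global membership $(R_{0:T}^{\pi^*}, Z_{0:T}^{\pi^*}, \Phi_{0:T}^{\pi^*}) \in \mathcal{U}$, and hence $\varrho_{\mathcal{U}}(R_{0:T}^{\pi^*}) \leq V_0(s_0)$; the reverse inequality from Theorem~\ref{thm:History_DP_decomposition} closes (i), while the per-period identity immediately delivers (ii). The main obstacle I foresee is matching the direction of the monotonicity in Assumption~\ref{assu:one-step}(iii) with the existentially-quantified premium inside $\mathcal{Q}_t$: the inclusion $\mathcal{U}_t(h_t, \Phi_{t+1}') \subset \mathcal{U}_t(h_t, \Phi_{t+1})$ for $\Phi_{t+1} \leq \Phi_{t+1}'$ shrinks feasibility as the future premium grows, so the argument closes cleanly only because the inductive hypothesis forces the auxiliary $V_{t+1}$ appearing in the DP equation to coincide with — rather than merely bound — the premium $\Phi_{t+1}^{\pi^*}$ realized by the greedy policy.
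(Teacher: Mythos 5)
Your proof is correct and follows essentially the same route as the paper's: a backward induction showing the greedy tail policy attains $V_t(h_t)$ at every history, using the monotonicity of $\mathcal U_t(h_t,\cdot)$ from Assumption~\ref{assu:one-step}(iii) to absorb the auxiliary premium $\Phi_{t+1} \geq V_{t+1}$ and then aggregating the per-period inclusions through the recursive decomposition of $\mathcal U$. Your closing remark about why the induction hypothesis must give equality $\Phi_{t+1}^{\pi^*} = V_{t+1}$ (not just a bound) is exactly the point the paper's terser argument relies on implicitly.
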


\subsection{Dynamic Programming on Information States}\label{sub:infostate}

The next definition builds on \cite{subramanian2022approximate} to extend the information state concept to the GCR-MDPs, where we must additionally account for the controlled payoff stream.

\begin{defn}
\label{defn:information_MDP}
(Information state generator for GCR-MDP) Let $\mathcal I = \{(\mathbb Y_t, \sigma_t) \}_{t \in [T]}$ be an information state generator for the GCR.
Then $\mathcal I$ is an information state generator for the GCR-MDP when we replace Definition~\ref{defn:information}(i) with: ${\rm Pr}(Y_{t+1} \in B \vert h_t, a_t, z_t) = {\rm Pr}(Y_{t+1} \in B \vert \sigma_t(h_t), a_t, z_t)$ for all $B \in \mathcal G_{t+1}$, $a_t \in \mathbb A$, $z_t \in \Re$, for all $t \in [T]$.
\end{defn}
\noindent
We let $g_t : \mathbb Y_t \times \mathbb A \times \Re \times \Xi \rightarrow \mathbb Y_{t+1}$ for $t \in [T]$ be the modified transition functions for the information state to account for the controls. We then have $y_{t+1} = g_t(y_t, a_t, z_t, \xi_t)$ for all $t \in [T]$.

We now consider policies $\bar\pi = (\bar\pi_t)_{t \in [T]}$ where $\bar \pi_t : \mathbb Y_t \to \mathbb A \times \mathbb R$ only depends on the information state for all $t \in [T]$, and we let $\bar \Pi \subset \Pi$ denote the class of such policies.
The GCR-MDP defined on the information state is:
\begin{equation}
\label{MDP_Information}
    \min_{\bar \pi \in \Pi} \varrho_{\bar{\mathcal U}}(R_{0:T}^{\bar \pi}) \equiv \min_{\bar \pi \in \Pi, \bar{\Phi}_{0:T} \in \mathcal L_{0:T}} \left\{ \varphi_0 : (R_{0:T}^{\bar \pi}, Z_{0:T}^{\bar\pi}, \bar{\Phi}_{0:T}) \in \bar{\mathcal U} \right\}.
\end{equation}

We next develop the DP decomposition for Eq.~\eqref{MDP_Information}.
For each $t \in [T]$, let $\bar{\mathcal V}_t$ be the space of bounded value functions $V : \mathbb Y_t \rightarrow \Re$ defined on $\mathbb Y_t$ for all $t \in [T]$, equipped with the supremum norm.
We again take the terminal value function $\bar V_{T+1} \in \bar{\mathcal V}_{T+1}$ as an input.
Given $\bar{\pi} \in \bar \Pi$, we define the tail policy $\bar{\pi}_{t:T} = (\bar{\pi}_t, \ldots, \bar{\pi}_T)$ and value functions
\[
    \bar V_t^{\bar \pi_{t:T}}(y_t) \triangleq \min_{\bar{\Phi}_{t:T}} \{\bar \Phi_t(y_t) : [(R_{t:T}^{\bar \pi_{t:T}}, \bar Z_{t:T}^{\bar \pi_{t:T}}, \bar{\Phi}_{t:T}) \vert y_t] \in \bar{\mathcal U}_{t:T}(y_t)\},\, \forall y_t \in \mathbb Y_t,
\]
for all $t \in [T]$.
We then define the optimal value functions to be:
\begin{equation}
    \bar V_t(y_t) \triangleq \min_{\bar \pi_{t:T}} \bar V_t^{\bar \pi_{t:T}}(y_t),\, \forall y_t \in \mathbb Y_t,
\end{equation}
for all $t \in [T]$.

In each period $t \in [T]$, we continue to let $X_t \in \bar{\mathcal L}_{t+1}$ and $\bar{\Phi}_{t+1} \in \bar{\mathcal L}_{t+1}$ be dummy variables directly controlled by DM.
The compressed risk frontier $\bar{\mathcal U}_t(y_t, \bar{\Phi}_{t+1})$ reflects the requirements on $(X_t, z_t, \varphi_t)$ on the information state due to the GCR.
Let $\bar{\mathcal D}_t : \mathbb{Y}_t \times \bar{\mathcal L}_{t+1} \rightrightarrows \bar{\mathcal L}_{t+1} \times \Re \times \bar{\mathcal L}_{t+1}$ be the correspondence for the dominance constraints on the information state defined by:
\begin{align*}
   \bar{\mathcal D}_t(y_t, \bar V_{t+1}) \triangleq & \Big\{(X_t, z_t, \bar{\Phi}_{t+1}) : \exists a \in \mathbb{A} \text{ s.t. } X_t(y_t, a, \xi_t) \leq r_t(s_t, a, \xi_t),\\
   & \bar{V}_{t+1}(g_t(y_t, z_t, a, \xi_t)) \leq \bar{\Phi}_{t+1}(g_t(y_t, z_t, a, \xi_t)),\, \forall \xi_t \in \Xi \Big\}.
\end{align*}
Then, we define correspondences $\bar{\mathcal Q}_t : \mathbb{Y}_t \times \bar{\mathcal L}_{t+1} \rightrightarrows \bar{\mathcal L}_{t+1} \times \Re \times \Re$ via:
\begin{align*}
   \bar{\mathcal Q}_t(y_t, \bar{V}_{t+1}) \triangleq & \Big\{(X_t, z_t, \varphi_t) \in \bar{\mathcal U}_t(y_t, \bar{\Phi}_{t+1}) : (X_t, z_t, \bar{\Phi}_{t+1}) \in \bar{\mathcal D}_t(y_t, \bar V_{t+1}) \Big\}.
\end{align*}

The next result generalizes \cite[Theorem 5]{subramanian2022approximate}, which does not consider constraints on the payoff stream and feasible controls in an information state MDP.
We show that $\{\bar V_t\}_{t=0}^{T+1}$ are the value functions for Eq.~\eqref{MDP_Information}.
\begin{thm}
\label{thm:Information_DP_decomposition}
Suppose Assumption~\ref{assu:one-step} holds and Eq.~\eqref{MDP_CR} has an information state generator $\mathcal I$.
The value functions $\{\bar V_t\}_{t=0}^{T+1}$ satisfy the decomposition:
\begin{equation}
\label{eq:DP_nonstandard}
    \bar V_t(y_t) = \min_{(X_t, z_t, \varphi_t)} \{ \varphi_t : (X_t, z_t, \varphi_t) \in \bar{\mathcal Q}_t(y_t, \bar V_{t+1}) \},\, \forall y_t \in \mathbb{Y}_t,\, \forall t \in [T].
\end{equation}
Furthermore, $\bar V_0(s_0) = \min_{\bar \pi \in \bar \Pi} \varrho_{\bar{\mathcal U}}(R_{0:T}^{\bar \pi})$.
\end{thm}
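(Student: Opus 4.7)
The plan is to mirror the argument of Theorem~\ref{thm:History_DP_decomposition}, but carried out on the information state $\mathbb{Y}_t$ rather than on histories $\mathbb{H}_t$. The crucial additional ingredients are Definition~\ref{defn:information_MDP}(i), which guarantees that $\{Y_t\}$ is a controlled Markov chain under any $\bar\pi \in \bar\Pi$, and Theorem~\ref{thm:Information_recursive}, which gives the recursive form of the tail dynamic risk measure $\varrho_{\bar{\mathcal U}, t:T}$ on $\mathbb{Y}_t$. Together with Theorem~\ref{thm:Information_sufficiency}, these allow us to restrict attention to $\bar\Pi$ and to the compressed risk frontiers $\{\bar{\mathcal U}_t\}_{t \in [T]}$ without loss of optimality.

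I would proceed by backward induction on $t$. The base case $t = T+1$ is immediate from the input $\bar V_{T+1}$. For the inductive step, fix $y_t \in \mathbb{Y}_t$, assume \eqref{eq:DP_nonstandard} holds at $t+1$, and start from the definition
\[
    \bar V_t(y_t) = \min_{\bar\pi_{t:T}} \min_{\bar{\Phi}_{t:T}} \{\bar\Phi_t(y_t) : [(R_{t:T}^{\bar\pi_{t:T}}, \bar Z_{t:T}^{\bar\pi_{t:T}}, \bar{\Phi}_{t:T}) \vert y_t] \in \bar{\mathcal U}_{t:T}(y_t)\}.
\]
Applying the recursive decomposition of Theorem~\ref{thm:Information_recursive} factors the inner minimization into a current-period piece and a tail piece, so that $\bar{\Phi}_{t+1}$ appears only through $\varrho_{\bar{\mathcal U}, t+1:T}(R_{t+1:T}^{\bar\pi_{t+1:T}})$. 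Minimizing the tail over $\bar\pi_{t+1:T}$ produces $\bar V_{t+1} \circ g_t(y_t,a_t,z_t,\cdot)$ as the natural next-period cost-to-go, because Definition~\ref{defn:information_MDP}(i) ensures that this quantity is a function of the information state alone. The dominance correspondence $\bar{\mathcal D}_t$ is exactly what is needed to encode the feasibility link: any achievable $(X_t, \bar\Phi_{t+1})$ on the MDP is almost surely dominated by $(r_t(s_t,a_t,\xi_t), \bar V_{t+1}(g_t(y_t,a_t,z_t,\xi_t)))$ for some $a_t \in \mathbb{A}$, which couples the inner minimization with the choice of action. Combining $\bar{\mathcal U}_t$ with $\bar{\mathcal D}_t$ yields $\bar{\mathcal Q}_t$, and produces the right-hand side of \eqref{eq:DP_nonstandard}.

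The main obstacle is justifying the interchange of the minimization over the tail policy $\bar\pi_{t+1:T}$ with the inclusion $[(X_t, \bar Z_t, \bar \Phi_t) \vert y_t] \in \bar{\mathcal U}_t(y_t, \bar\Phi_{t+1})$, which replaces $\bar \Phi_{t+1}$ with $\bar V_{t+1}$ inside the risk frontier. This is precisely the step where Assumption~\ref{assu:one-step}(iii) becomes essential: since $\bar{\mathcal U}_t(y_t, \cdot)$ is antitone in its second argument, picking the pointwise minimum $\bar V_{t+1}$ yields the largest feasible set of current-period triples, so the interchange is valid via the same lattice-theoretic argument invoked in Theorem~\ref{thm:History_DP_decomposition}. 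Once \eqref{eq:DP_nonstandard} is established, evaluating at $t = 0$ (where $y_0 = s_0$) and invoking Theorem~\ref{thm:Information_sufficiency}(ii) to identify $\varrho_{\bar{\mathcal U}}(R_{0:T}^{\bar\pi}) = \varrho_{\mathcal U}(R_{0:T}^{\bar\pi})$ yields $\bar V_0(s_0) = \min_{\bar\pi \in \bar\Pi} \varrho_{\bar{\mathcal U}}(R_{0:T}^{\bar\pi})$, completing the proof.
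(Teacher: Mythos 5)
Your proposal is correct and rests on the same key ingredients as the paper's proof: the lattice interchange of Lemma~\ref{lem:lattice-decomposition} enabled by the antitonicity in Assumption~\ref{assu:one-step}(iii), together with Definition~\ref{defn:information}(ii)--(iii) to identify the compressed correspondences. The only cosmetic difference is that the paper reduces directly to Theorem~\ref{thm:History_DP_decomposition} by observing $\mathcal Q_t(h_t,\bar{\Phi}_{t+1}) = \bar{\mathcal Q}_t(\sigma_t(h_t),\bar{\Phi}_{t+1})$ and noting the lattices $\mathcal V_t$ coincide with their information-state counterparts, whereas you rerun the backward induction on $\mathbb Y_t$; both arguments are equivalent.
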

\noindent
The decompositions of Theorem~\ref{thm:History_DP_decomposition} and Theorem~\ref{thm:Information_DP_decomposition} have similar structure, with the cost-to-go problems in epigraphical form. Define the greedy policy $\bar \pi^* = (\bar \pi_t^*)_{t=0}^T$ with respect to $\{\bar V_t\}_{t=0}^T$ by:

\begin{subequations}
\label{eq:DP_nonstandard_policy}
\begin{align}
    \bar \pi_t^{a*}(\sigma_t(h_t)) = & a_t^*,\\
    R_t(s_t, a_t^*) = & X_t^*,\, a_t^* \in \mathbb{A},\\
    \bar \pi_t^{z*}(\sigma_t(h_t)) = & z_t^*,\\
    (X_t^*, z_t^*, \varphi_t^*) \in & \argmin_{(X_t, z_t, \varphi_t)} \left\{\varphi_t : (X_t, z_t, \varphi_t) \in \bar{\mathcal Q}_t(\sigma_t(h_t), V_{t+1}) \right\},\, \forall h_t \in \mathbb{H}_t,\, \forall t \in [T].
\end{align}
\end{subequations}

\begin{thm}
\label{thm:Information_policy}
Suppose Assumption~\ref{assu:one-step} holds and the GCR-MDP has an information state generator $\mathcal I$.

(i) $\bar \pi^*$ is optimal for Eq.~\eqref{MDP_Information}.

(ii) $\bar \pi^*$ is time-consistent.
\end{thm}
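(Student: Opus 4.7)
The plan is to reduce both claims to a single backward-induction identity,
\[
\bar V_t^{\bar \pi^*_{t:T}}(y_t) = \bar V_t(y_t), \qquad \forall\, y_t \in \mathbb Y_t,\, t \in [T+1].
\]
Part (i) then follows by taking $t = 0$ and applying the last sentence of Theorem~\ref{thm:Information_DP_decomposition}, which identifies $\bar V_0(s_0)$ with $\min_{\bar \pi \in \bar \Pi} \varrho_{\bar{\mathcal U}}(R_{0:T}^{\bar \pi})$. Part (ii) is precisely the assertion that, for every $t$ and every $y_t$, the tail policy $\bar \pi^*_{t:T}$ attains the tail-optimal value $\bar V_t(y_t)$, which is the information-state translation of Definition~\ref{defn:time_consistency_policy}.

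The base case at $t = T+1$ is immediate since $\bar V_{T+1}$ is a fixed input and no decisions remain. For the inductive step, assume the identity at $t+1$. Applying the recursive decomposition of Theorem~\ref{thm:Information_recursive} to the fixed tail policy $\bar \pi^*_{t:T}$ (so that the payoff and disbursement streams are fixed and only the premium schedule is minimized over) and substituting $\bar V_{t+1}^{\bar \pi^*_{t+1:T}} = \bar V_{t+1}$ via the inductive hypothesis yields
\[
\bar V_t^{\bar \pi^*_{t:T}}(y_t) = \min_{\varphi_t} \bigl\{ \varphi_t : (R_t^{\bar \pi^*_t}(y_t), z_t^{\bar \pi^*_t}, \varphi_t) \in \bar{\mathcal U}_t(y_t, \bar V_{t+1}) \bigr\}.
\]
By the construction of the greedy policy in Eq.~\eqref{eq:DP_nonstandard_policy}, the triple $(X_t^*, z_t^*, \varphi_t^*)$ lies in $\bar{\mathcal Q}_t(y_t, \bar V_{t+1})$, attains the joint minimum with $\varphi_t^* = \bar V_t(y_t)$ by Theorem~\ref{thm:Information_DP_decomposition}, and matches the policy-generated quantities $X_t^* = R_t^{\bar \pi^*_t}(y_t)$ and $z_t^* = z_t^{\bar \pi^*_t}$. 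Hence $\varphi_t^*$ is feasible in the displayed minimization, giving $\bar V_t^{\bar \pi^*_{t:T}}(y_t) \leq \varphi_t^* = \bar V_t(y_t)$, while the reverse inequality follows from $\bar V_t(y_t) = \min_{\bar \pi_{t:T}} \bar V_t^{\bar \pi_{t:T}}(y_t)$, closing the induction.

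The main obstacle is justifying the substitution of $\bar V_{t+1}$ for $\bar V_{t+1}^{\bar \pi^*_{t+1:T}}$ inside $\bar{\mathcal U}_t(y_t, \cdot)$, which is where the structural assumptions are consumed. Assumption~\ref{assu:one-step}(iii) is what guarantees that passing between two next-period premium functions preserves (or enlarges) feasibility in the tail risk frontier, and the dominance constraints baked into $\bar{\mathcal D}_t$ inside $\bar{\mathcal Q}_t$ are what tie the optimal next-period value function $\bar V_{t+1}$ to the premium variable $\bar{\Phi}_{t+1}$ in the DP. A secondary technical point is that Theorem~\ref{thm:Information_recursive} must be specialized from the unconstrained $\varrho_{\bar{\mathcal U}, t:T}$ to the value function of a fixed policy; since fixing $(\bar \pi^a, \bar \pi^z)$ only restricts the controls and does not affect the recursive decomposition of $\bar{\mathcal U}$ granted by Assumption~\ref{assu:one-step}, the same interchange-of-minimization argument used in Theorem~\ref{thm:Information_recursive} transfers verbatim.
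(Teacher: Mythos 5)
Your proposal is correct and follows essentially the same route as the paper: a backward induction showing the greedy tail policy attains $\bar V_t(y_t)$ at every $t$, using Assumption~\ref{assu:one-step}(iii) to pass from the policy-generated next-period premium to $\bar V_{t+1}$ inside the (compressed) risk frontier, with optimality read off at $t=0$ and time consistency being exactly the tail-optimality statement. The paper's version is simply the history-space argument of Theorem~\ref{thm:History_policy} transported to the information state, which is what you have written out.
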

\noindent
When an information state exists, $\bar \pi^*$ is also optimal for Eq.~\eqref{MDP_CR}.

\section{Applications}
\label{sec:applications}

We develop several GCR examples in this section, where we report the information state and the compressed risk frontiers (the full derivations are given in Appendix~\ref{sec:applications_Appendix}).
Here the information state usually only requires one augmenting state variable (e.g., wealth, target shortfall, or risk budget).
In addition, all of these examples turn out to be time consistent by Theorem~\ref{thm:recursive}.
We may then apply Theorem~\ref{thm:Information_recursive} to get the temporal decomposition of the GCR on the information state.
For these examples, we treat the canonical form of the GCR where there is no underlying MDP.
Based on Section~\ref{sec:problem}, we can easily adapt these examples to the corresponding GCR-MDP by applying Theorem~\ref{thm:Information_DP_decomposition}.

\subsection{Original State Space}
\label{subsec:examples_original}

We start with some GCRs that can be set up on the original state space with $\mathbb Y_t = \mathbb{S}$ where $y_t = \sigma_t(h_t) = s_t$ for all $t \in [T]$.
The information state transition functions are $g_t(y_t, \xi_t) = f_t(s_t, \xi_t)$ for all $t \in [T-1]$.

\subsubsection{Risk-neutral}

Let $\varrho_{{\rm RN}}(X_{0:T}) = - \mathbb E[ \sum_{t=0}^T X_t ]$ be the negative expected total reward (to align with the minimization objective of the GCR).
We set $\Phi_{T+1} \equiv 0$ since there are no terminal conditions, then $\bar{\mathcal U}_t(y_t, \bar{\Phi}_{t+1})$ (for $(X_t, \varphi_t)$) is determined by the constraint $\varphi_t \geq \mathbb E[ \bar{\Phi}_{t+1}(g_t(y_t, \xi_t)) - X_t(s_t, \xi_t)]$.

\subsubsection{Entropic risk measure}

Let $\varrho_{{\rm Ent}}(X_{0:T}) = (-1/\gamma) \log (\mathbb{E} [ \exp (-\gamma \sum_{t = 0}^{T} X_t ) ])$ be the entropic risk measure of the cumulative payoff for $\gamma > 0$ where $X_{0:T} \geq 0$.
Since $\log(\cdot)$ is monotone, we can equivalently minimize $\mathbb{E} [ \exp (- \gamma \sum_{t = 0}^{T} X_t ) ]$.
We set $\Phi_{T+1} \equiv 1$ corresponding to a terminal payoff of $\exp(0) = 1$.
Then, $\bar{\mathcal U}_t(y_t, \bar{\Phi}_{t+1})$ (for $(X_t, \varphi_t)$) is determined by the constraint $\varphi_t \geq \mathbb E[e^{- \gamma\, X_t(s_t, \xi_t)} \bar{\Phi}_{t+1}(g_t(y_t, \xi_t))]$.

\subsubsection{Nested risk measures}

For all $t\in [T]$, let $\mathcal A_t \subset \mathcal L(\Xi, \mathcal B(\Xi), P_t)$ be an acceptance set for payoffs (which is independent of history).
We suppose $\{\mathcal A_t\}_{t \in [T]}$ are all monotone, closed, convex, and pointed.
Then let $\{ \rho_t \}_{t \in [T]}$ be a collection of one-step coherent risk measures defined by $\rho_t(X_t) = \min_{z_t \in \Re} \{z_t : X_t + z_t \in \mathcal A_t\}$ for all $t \in [T]$.

The resulting nested risk measure on payoff streams (see \cite{ruszczynski2006conditional,ruszczynski2010risk}) is defined recursively by:
\[
\varrho_{t:T}(X_{t:T}) = \rho_t(X_t + \varrho_{t+1:T}(X_{t+1:T})),\, \forall X_{t:T} \in \mathcal L_{t+1:T+1},\,\forall t \in [T],
\]
and we let $\varrho_{{\rm N}}(X_{0:T}) \triangleq \varrho_{0:T}(X_{0:T})$.
We set $\Phi_{T+1} \equiv 0$, then $\bar{\mathcal U}_t(y_t, \bar{\Phi}_{t+1})$ is determined by the constraints: $\varphi_t \geq z_t$ and $[X_t \vert s_t] - [\bar{\Phi}_{t+1} \vert y_t] + z_t \in \mathcal A_t$.
The entire distribution of the next period premium enters into the inclusion for the acceptance set, rather than just its expectation.

\subsection{Wealth-based}
\label{subsec:examples_wealth}

Many GCRs naturally correspond to wealth-dependent preferences.
When DM is a firm, regulators may require it to maintain cash reserves to cover existing obligations. For instance, an insurance firm must keep reserves to meet claims; an energy company must maintain reserves for contingencies; investment firms have margin requirements to cover unexpected losses (see, e.g., \cite{capponi2018clearinghouse}).
The information state in these examples is $\mathbb Y_t = \mathbb{S} \times \mathbb W$ where $y_t = \sigma_t(h_t) = (s_t, w_t)$ for all $t \in [T]$.
We embed the wealth update (which is different for each example) into the risk frontier.

\subsubsection{Standard capital requirement}

We take the product acceptance set $\mathcal A_{\times}$ for $\varrho_{\mathcal A_{\times}}$.
The wealth evolves according to $W_{t+1} = W_t - Z_t$ for all $t \in [T]$, where $Z_{0:T}$ are disbursements.
Let $g_t(y_t, z_t, \xi_t) = (f_t(s_t, \xi_t), w_t - z_t)$ be the transition functions for $t \in [T-1]$.
The initial information state is $y_0 = (s_0, 0)$, and the endowment $w_0 \in \Re$ is a decision variable.
We let $\Phi_{T+1} \equiv \delta_{\{W_{T+1} \geq 0\}}$ (which is the indicator function which returns zero if $W_{T+1} \geq 0$ and positive infinity otherwise), then $\bar{\mathcal U}_T(y_T, \bar{\Phi}_{T+1})$ is determined by the constraints: $[X_T \vert s_T] + z_T \in \mathcal A_T$ and $w_T \geq z_T$.
Next, $\bar{\mathcal U}_t(y_t, \bar{\Phi}_{t+1})$ for $t \in [1, T-1]$ is determined by the constraints: $\varphi_t \geq \mathbb E[\bar{\Phi}_{t+1}(g_t(y_t, z_t, \xi_t))]$ and $[X_t \vert s_t] + z_t \in \mathcal A_t$.
Finally, $\bar{\mathcal U}_0(y_0, \bar{\Phi}_{1})$ is determined by the constraints: $\varphi_0 \geq w_0 + \mathbb E[\bar{\Phi}_1(g_0(y_0, z_0, \xi_0))]$ and $X_0 + z_0 \in \mathcal A_0$.

\subsubsection{Consumption}

This example is based on the dynamic risk measures developed in \cite{pflug2001risk,pflug2005measuring}, where $Z_{0:T} \in \mathcal{C}_{\geq 0}$ are consumption decisions.
The wealth update is given by $\gamma(W_t, X_t, Z_t) \triangleq \max\{ W_t + X_t - Z_t, 0 \}$.
Since consumption is pre-committed, there may be consumption shortfall $\max \{Z_t - W_t - X_t, 0 \}$ in period $t \in [T]$.

Let $\alpha_t > 0$ be the unit utility from consumption and $c_t > \alpha_t$ be the unit cost for consumption shortfall in period $t \in [T]$, then let $\psi_t(W_t, X_t, Z_t) \triangleq c_t \max \{Z_t - W_t - X_t, 0 \} - \alpha_t Z_t$ be the loss (shortfall cost minus utility) in period $t \in [T]$.
DM wants to solve:
\begin{subequations}
\label{eq:consumption}
\begin{align}
    \varrho_{{\rm C}}(X_{0:T}) = \min_{Z_{0:T}, W_{1:T+1}} \quad & \mathbb E \left[\sum_{t=0}^T \psi_t(W_t, X_t, Z_t) \right],\\
    \text{s.t.} \quad & W_{t+1} = \gamma(W_t, X_t, Z_t),\, \forall t \in [T],\\
    & Z_{0:T} \in \mathcal{C}_{\geq 0}.
\end{align}
\end{subequations}
The transition functions are $g_t(y_t, z_t, \xi_t) = (f_t(s_t, \xi_t), \gamma(w_t, X_t(s_t, \xi_t), z_t))$ for all $t \in [T-1]$.
Let $\Phi_{T+1} \equiv 0$, then $\bar{\mathcal U}_t(y_t, \bar{\Phi}_{t+1})$ is determined by the constraints:
\[
\varphi_t \leq \mathbb E[\psi_t(w_t, [X_t \vert s_t], z_t) + \bar{\Phi}_{t+1}(g_t(y_t, z_t, \xi_t))]\}.
\]

\subsubsection{Consumption excess}

This example is based on \cite{chen2015dynamic}, where $Z_{0:T}$ are consumption decisions.
Let $0 \leq \beta_t^S \leq \beta_t^B$ be the savings return rate and loan interest rate, respectively, where wealth evolves according to
$W_{t+1} = \gamma_t(W_t, Z_t) \triangleq \min\{(1 + \beta_t^S)W_t - Z_t, (1 + \beta_t^B)W_t - Z_t\}$ for all $t \in [T]$.
Wealth may become negative but we require all loans to be repaid by the end of the planning horizon.

The total amount consumed in period $t \in [T]$ is $X_t + z_t$, the realized income plus consumption (positive $z_t$ are a withdrawal from savings to augment consumption, negative $z_t$ are a deposit to savings from realized income).
Let $\zeta_t : \mathbb{S} \rightarrow \Re$ be (exogenous) state-dependent consumption targets for all $t \in [T]$, where $\zeta_t(s_t)$ is the target in period $t$ in state $s_t$. The {\em consumption excess} profile is $(X_t + z_t - \zeta_t(s_t))_{t \in [T]}$, where $X_t + z_t - \zeta_t(s_t)$ is the consumption excess in period $t$.
Let $u : \Re \rightarrow \Re$ be an increasing concave utility function, and introduce auxiliary variables $\alpha_t \in \Re$ for all $t \in [T]$. We define $\psi_t(\alpha_t, X_t, z_t; s_t) \triangleq \alpha_t \mathbb E[u((X_t + z_t - \zeta_t(s_t)) / \alpha_t) \vert s_t]$ for all $t \in [T]$, and $\Psi(\alpha_{0:T}, X_{0:T}, Z_{0:T}) \triangleq ((\psi_t(\alpha_t, X_t, Z_t; s_t))_{s_t \in \mathbb{S}})_{t \in [T]}$ to be the vector of consumption excess measures.
For fixed $\epsilon > 0$, we consider:
\begin{subequations}
\label{eq:consumption_excess}
\begin{align}
    \varrho_{{\rm CE}}(X_{0:T}) = \min_{Z_{0:T}, W_{1:T+1}, \alpha_{0:T}} \quad & \sum_{t=0}^T \alpha_t\\
    \text{s.t.} \quad & W_{t+1} = \gamma_t(W_t, Z_t),\, \forall t \in [T],\\
    & \Psi(\alpha_{0:T}, X_{0:T}, Z_{0:T}) \geq 0,\\
    & Z_{0:T} \in \mathcal{C}_{w},\\
    & \alpha_t \geq \epsilon,\, \forall t \in [T],
\end{align}
\end{subequations}
which returns $+\infty$ unless all consumption targets are met due to $\Psi(\alpha_{0:T}, X_{0:T}, Z_{0:T}) \geq 0$.

The transition functions are $g_t(y_t, z_t, \xi_t) = (f_t(s_t, \xi_t), \gamma_t(w_t, z_t))$ for all $t \in [T-1]$.
We let $\Phi_{T+1} \equiv 0$, then $\bar{\mathcal U}_t(y_t, \bar{\Phi}_{t+1})$ is determined by the constraints:
\[
\varphi_t \geq \alpha_t + \max_{\xi_t \in \Xi} \bar{\Phi}_{t+1}(g_t(y_t, z_t, \xi_t)),\, \psi_t(\alpha_t, X_t, z_t; s_t) \geq 0,\, \alpha_t \geq \epsilon.
\]


\subsection{Target-based}
\label{subsec:examples_target}

The following examples are all based on performance targets and different measures of target shortfall.
We let $Z_{0:T} \in \mathcal{C}_{\geq 0}$ be the degree of target shortfall (which is non-negative so we cannot make up for past shortfall).
We take the acceptance set $\mathcal A_{\times}$ so the performance targets are represented by $\mathcal A_t$ in each period, and we require $X_{0:T} + Z_{0:T} \in \mathcal A_{\times}$.
For all $t \in [T]$ we let $M_t \in \mathcal L(\Omega, \mathcal{F}_t, \Re^{|\Xi|})$ denote an auxiliary variable, where $M_{0:T} = (M_t)_{t=0}^T$ is the sequence of auxiliary variables.
We let $m_t = (m_t(\xi))_{\xi \in \Xi} \in \Re^{|\Xi|}$ denote a realization of $M_t$.
We also let $\eta_t \in \Re$ denote an augmenting state variable.

\subsubsection{Expected utility}

Let $u : \Re \rightarrow \Re$ be an increasing convex dis-utility function.
DM wants to minimize the expected dis-utility of total target shortfall:
\begin{equation*}
    \varrho_{{\rm EU}}(X_{0:T}) = \min_{Z_{0:T}} \left\{ \mathbb E \left[ u \left(\sum_{t=0}^T Z_t \right) \right] : X_{0:T} + Z_{0:T} \in \mathcal A_{\times}, Z_{0:T} \in \mathcal{C}_{\geq 0} \right\}.
\end{equation*}
Define $\eta_t = \sum_{k=0}^{t-1} Z_k$ to be the cumulative target shortfall up to the beginning of period $t \in [T+1]$ (where $\eta _0 = 0$ and $\eta_{T+1}$ is the total shortfall).
The information state is $\mathbb Y_t = \mathbb{S} \times \Re_{\geq 0}$ where $y_t = \sigma_t(h_t) = (s_t, \eta_t)$ for all $t \in [T]$.
The transition functions are $g_t(y_t, z_t, \xi_t) = (f_t(s_t, \xi_t), \eta_t + z_t)$ for $t \in [T-1]$ and $g_T(y_T, z_T, \xi_T) = \eta_T + z_T$.
Let the terminal premium be $\Phi_{T+1}(h_{T+1}) \equiv u( \sum_{t=0}^T Z_t(h_{T+1}))$ for all $h_{T+1} \in \mathbb{H}_{T+1}$.
Then $\bar{\mathcal U}_t(y_t, \bar{\Phi}_{t+1})$ is determined by the constraints: $\varphi_t \geq \mathbb E[\bar{\Phi}_{t+1}(g_t(y_t, z_t, \xi_t))]$ and $[X_t \vert s_t] + z_t \in \mathcal A_t$.

\subsubsection{Worst-case expectation}

Suppose there is an adversary who can perturb the transition kernels.
For each $t \in [T]$, let $\Delta_t \triangleq \{ m_t \geq 0 : \mathbb E_{P_t}[m_t] = 1 \}$ be the set of densities with respect to $P_t$.
Let $M_t \in \Delta_t$ be the adversary's perturbation of $P_t$ to obtain $M_t P_t$.
Let $K \geq 0$ be a perturbation budget and $d(\cdot, \cdot)$ be a metric on $\mathcal P(\Xi)$.
Let
$$
\mathcal Q = \left\{ M_{0:T} : \sum_{t=0}^T d(M_t P_t, P_t) \leq K, M_t \in \Delta,\, \forall t \in [T] \right\}
$$
be the set of feasible adversary strategies with bounded total perturbation (see, e.g., \cite{mannor2016robust}).
DM wants to minimize the worst-case expected cumulative target shortfall:
\begin{equation}
    \varrho_{{\rm R}}(X_{0:T}) = \min_{Z_{0:T}} \left\{ \sup_{M_{0:T} \in \mathcal Q} \mathbb E_{M_{0:T}} \left[ \sum_{t=0}^T Z_t \right] : X_{0:T} + Z_{0:T} \in \mathcal A_{\times}, Z_{0:T} \in \mathcal{C}_{\geq 0} \right\}.
\end{equation}

Let $\eta_t$ be the remaining perturbation budget and let $\Delta_t(\eta_t) \triangleq \{m_t \in \Delta_t : d(m_t P_t, P_t) \leq \eta_t\}$ denote the constraints on the adversary's selection for all $t \in [T]$, where $\eta_0 = K$.
The information state is then $\mathbb Y_t = \mathbb{S} \times \Re$ where $y_t = \sigma_t(h_t) = (s_t, \eta_t)$ for all $t \in [T]$. 
Let $g_t(s_t, \eta_t, \xi_t) = (f_t(s_t, \xi_t), \eta_t)$ for $y_t = (s_t, \eta_t)$ for all $t \in [T]$ be the transition functions.
Let $\Phi_{T+1} \equiv 0$, then $\bar{\mathcal U}_t(y_t, \bar{\Phi}_{t+1})$ is determined by the constraints:
\begin{align*}
\varphi_t \geq & z_t + \mathbb E[\bar{\Phi}_{t+1}(g_t(s_t, \eta_t - d(m_t P_t, P_t), \xi_t)) m_t(\xi_t)],\, \forall m_t \in \Delta_t(\eta_t),\\
[X_t \vert s_t] + z_t \in & \mathcal A_t,\, z_t \geq 0.
\end{align*}

\subsubsection{Conditional value-at-risk}

Recall the conditional value-at-risk (for cost) at level $\alpha \in (0,1)$ is given in variational form by ${\rm CVaR}_{\alpha}(X) \triangleq \min_{\eta \in \Re} \{ \eta + (1 - \alpha)^{-1} \mathbb E[\max\{X-\eta, 0\}] \}$.
For $\alpha \in (0, 1)$, DM wants to solve:
\begin{equation}
    \varrho_{{\rm CVaR}}(X_{0:T}) = \min_{Z_{0:T}} \left\{ {\rm CVaR}_{\alpha} \left( \sum_{t=0}^T Z_t \right) : X_{0:T} + Z_{0:T} \in \mathcal A_{\times}, Z_{0:T} \in \mathcal{C}_{\geq 0} \right\},
\end{equation}
which minimizes the CVaR of the cumulative target shortfall (see \cite{chow2015risk,pflug2016time}).

We let $\eta_t$ denote the remaining risk budget and let $\Delta_t(\eta_t) \triangleq \{m_t \geq 0 : m_t \in [0, 1/\eta_t],\, \mathbb E_{P_t}[m_t] = 1 \}$ in period $t \in [T]$, where $\eta_0 = \alpha$. The information state $\mathbb Y_t = \mathbb{S} \times [0, 1)$ tracks the state and DM's risk level, and the history compression functions are $y_t = \sigma_t(h_t) = (s_t, \eta_t)$ for all $t \in [T]$.
The transition functions are $g_t(y_t, z_t, \xi_t) = (f_t(s_t, \xi_t), \eta_t)$ for all $t \in [T-1]$.
Let $\Phi_{T+1} \equiv 0$, then $\bar{\mathcal U}_t(y_t, \bar{\Phi}_{t+1})$ is determined by the constraints:
\begin{align*}
\varphi_t \geq & z_t + \mathbb E \left[\bar{\Phi}_{t+1}(g_t(s_t, \eta_t/m_t(\xi_t), \xi_t))m_t(\xi_t) \right],\, \, \forall m_t \in \Delta_t(\eta_t),\\
[X_t \vert s_t] + z_t \in & \mathcal A_t,\, z_t \geq 0.
\end{align*}

\subsubsection{Quantile risk measure}

Let $Q_{\tau}(X) \triangleq \inf\{ x : \mathbb P(X \leq x) \geq \tau \}$ for $\tau \in (0, 1)$ be the $\tau-$quantile of $X \in \mathcal L$.
We want to maximize the $\tau-$quantile of $- \sum_{t=0}^T Z_t$ (see \cite{li2017quantile}) by solving:
\begin{equation*}
    \varrho_{{\rm Q}}(X_{0:T}) = \min_{Z_{0:T}} \left\{ - Q_{\tau} \left(- \sum_{t=0}^T Z_t \right) : X_{0:T} + Z_{0:T} \in \mathcal A_{\times}, Z_{0:T} \in \mathcal{C}_{\geq 0} \right\}.
\end{equation*}

We let $\eta_t$ denote DM's risk budget and let $\Delta_t(\eta_t) \triangleq \{m_t \geq 0 : m_t(\xi_t) \in [0, 1],\, \mathbb E_{P_t}[m_t] \leq \eta_t \}$ in period $t \in [T]$, where $\eta_0 = \tau$.
The information state is $\mathbb Y_t = \mathbb{S} \times [0,1]$ and $y_t = \sigma_t(h_t) = (s_t, \eta_t)$ for all $t \in [T]$ where $\eta_t \in (0, 1)$ is DM's risk level in period $t$.
The transition function is $g_t(s_t, \eta_t, \xi_t) = (f_t(s_t, \xi_t), \eta_t)$ for $y_t = (s_t, \eta_t)$ in periods $t \in [T-1]$.
We set $\Phi_{T+1} \equiv 0$, then $\bar{\mathcal U}_t(y_t, \bar{\Phi}_{t+1})$ is determined by the constraints:
\begin{align*}
\varphi_t \geq & z_t + \max_{\{\xi_t \in \Xi,\, m_t(\xi_t) \ne 1\}} \left\{ \bar{\Phi}_{t+1}(g_t(s_t, \xi_t), m_t(\xi_t)) \right\},\\
[X_t \vert s_t] + z_t \in & \mathcal A_t,\, m_t \in \Delta_t(\eta_t).
\end{align*}

\subsubsection{Growth profile}

For each $t \in [T]$, let $\mathcal A_t(W_{t}) \subset \mathcal L(\Xi, \mathcal B(\Xi), P_t)$ be an acceptance set for $W_{t+1}$ defined on $\Xi$, which depends on the current wealth $W_t$.
We minimize the worst-case target shortfall by solving:
\begin{subequations}
\label{eq:growth}
\begin{align}
    \varrho_{{\rm G}}(X_{0:T}) = \min_{Z_{0:T}, W_{1:T+1}} \quad & \max\{Z_0, \ldots, Z_T\}\\
    \text{s.t.} \quad & W_{t+1} = W_{t} + X_t,\, \forall t \in [T],\\
    & W_{t+1} + Z_{t} \in \mathcal A_t(W_{t}),\, \forall t \in [T],\\
    & Z_{0:T} \in \mathcal{C}_{\geq 0},
\end{align}
\end{subequations}
which minimizes the expected worst-case target shortfall.

Let $\eta_t = \max\{ Z_0, Z_1, \ldots, Z_t \}$ be the maximum observed shortfall up through period $t \in [T]$.
The information state is $\mathbb Y_t = \mathbb{S} \times \mathbb W \times \Re_{\geq 0}$ where $y_t = \sigma_t(h_t) = (s_t, w_t, \eta_t)$ tracks both wealth and maximum observed shortfall.
Let $g_t(y_t, z_t, \xi_t) = (f_t(s_t, \xi_t), w_t + X_t(s_t, \xi_t), \max\{\eta_{t-1}, z_t \})$ be the transition function for period $t \in [T-1]$.
Let $\Phi_{T+1}(h_{T+1}) \equiv \max\{Z_0(h_{T+1}), \ldots, Z_T(h_{T+1})\}$ be the worst-case shortfall on $h_{T+1} \in \mathbb{H}_{T+1}$.
Then $\bar{\mathcal U}_t(y_t, \bar{\Phi}_{t+1})$ are determined by the constraints:
\begin{align*}
\varphi_t \geq & \bar{\Phi}_{t+1}(g_t(y_t, z_t, \xi_t)),\, \forall \xi_t \in \Xi,\\
w_t + [X_t \vert s_t] + z_t \in & \mathcal A_t(w_t),\, z_t \geq 0.
\end{align*}
This GCR requires two augmenting state variables for cumulative wealth and target shortfall to combine wealth-based and target-based GCRs.

\section{Simulations}\label{sec:simulations}

In this section we develop GCR-MDPs for the dynamic newsvendor and investigate their performance numerically.
We suppose backorders are not allowed and all unmet demand is lost.
Let $\mathbb{S} = \{0, 1, \ldots, s_{\max}\}$ be the space of allowable inventory levels (any inventory exceeding $s_{\max}$ is destroyed). Let the set of feasible order quantities be $\mathbb{A} = \{0, 1, \ldots, a_{\max}\}$ (where $a_{\max} \leq s_{\max}$).
Finally, let the discrete demand be supported on $\Xi = \{0, 1, \ldots, \xi_{\max}\}$.
We let $s_t \in \mathbb{S}$ be the inventory, $a_t \in \mathbb{A}$ be the order quantity, and $\xi_t \in \Xi$ be the random demand in period $t \in [T]$.
The inventory state equation is:
$$
s_{t+1} = f_t(s_t, a_t, \xi_t) = \min\{\max\{s_t + a_t - \xi_t, 0\}, s_{\max}\},\, \forall t \in [T-1].
$$
Let $p > c > h \geq 0$ be the unit selling price, order cost, and holding cost, respectively, so the payoff function is
$$
r(s_t, a_t, \xi_t) \triangleq p\, \min\{s_t + a_t, \xi_t\} - c\, a_t - h\, \max\{s_t + a_t - \xi_t, 0\}.
$$
Since $\mathbb{S}$, $\mathbb{A}$, and $\Xi$ are all finite, $r(s_t, a_t, \xi_t)$ can only take finitely many values.

\subsection{Newsvendor GCR Models}

We constrain disbursement/shortfall decisions to be integer-valued and belong to $\mathcal{C}_{z} \triangleq \{Z_{0:T} : Z_t \in \{0, 1, \ldots, z_{\max}\},\, \forall t \in [T]\}$.
Since payoffs and disbursements only take finitely many allowable values, we suppose DM's wealth only takes values in $\mathbb{W} \triangleq \{ w_{\min}, \ldots, -1, 0, 1, \ldots, w_{\max} \}$.
We define $\gamma : \Re \rightarrow \mathbb W$ via $\gamma(w) \triangleq \min\{ \max\{w, w_{\min}\}, w_{\max} \}$, which maps wealth to its allowable support $\mathbb W$.
The information state for the following wealth-based GCR-MDPs is $\mathbb Y_t = \mathbb{S} \times \mathbb W$, where $y_t = (s_t, w_t)$ for all $t \in [T]$.
We also let $\zeta \geq 0$ be a model parameter for the following three models (which takes a different meaning in each model).

First suppose DM has an income target $\zeta$ for every period, and $Z_{0:T}$ are disbursements.
Suppose the initial wealth is zero before the endowment $w_0$ is made.
We let $\beta > 1$ be the unit penalty for negative terminal wealth.
The corresponding GCR-MDP is:
\begin{subequations}
\label{newsvendor_standard}
\begin{align}
\min_{\pi \in \Pi,\, w_0 \in \mathbb W,\, W_{1:T+1}} \quad & w_0 - \beta\, \mathbb E[\min\{W_{T+1}, 0\}]\\
\text{s.t.} \quad & R_t^\pi + Z_t^\pi \in \mathcal A_t,\, \forall t \in [T],\\
& W_{t+1} = \gamma(W_t - Z_t^\pi),\, \forall t \in [T],\\
& Z_{0:T}^\pi \in \mathcal{C}_{\geq 0} \cap \mathcal{C}_{z}.
\end{align}
\end{subequations}
In the standard CR, there is an infinite penalty when the terminal wealth does not satisfy $W_{T+1} \geq 0$. In Eq.~\eqref{newsvendor_standard}, we instead give a continuous penalty for negative terminal wealth.
We let $\pi_{\rm SC}$ denote the optimal policy for Eq.~\eqref{newsvendor_standard}.

Next suppose DM has a target wealth reserve $\zeta$ for every period, where $Z_{0:T}$ are target shortfall.
Let $\mathcal A_t(W_t) = \{\mathbb E[W_t + R_t^\pi + Z_t^\pi] \geq \zeta\}$ be the acceptance set for the target wealth reserve at the end of each period $t \in [T]$, which depends on the current wealth.
The corresponding GCR-MDP is:
\begin{subequations}
\label{newsvendor_wealth}
\begin{align}
\min_{\pi \in \Pi,\, W_{1:T+1}} \quad & \mathbb E \left[ \sum_{t=0}^T Z_t^\pi \right]\\
\text{s.t.} \quad & R_t^\pi + Z_t^\pi \in \mathcal A_t(W_t),\,\forall t \in [T],\\
& W_{t+1} = \gamma(W_t + R_t^\pi),\, \forall t \in [T],\\
& Z_{0:T}^\pi \in \mathcal{C}_{\geq 0} \cap \mathcal{C}_{z}.
\end{align}
\end{subequations}
We let $\pi_{\rm WR}$ denote the optimal policy for Eq.~\eqref{newsvendor_wealth}.

In our final newsvendor GCR-MDP, DM's orders are constrained by on-hand wealth and the corresponding risk frontier directly constrains the ordering decisions.
Here we let $A_{0:T}^\pi = (A_0^\pi, \ldots, A_T^\pi)$ where $A_t^\pi$ is the order quantity in period $t \in [T]$.
We then let $\mathcal A_t(w_t) = \{A_t : c\,A_t \leq w_t + \zeta\}$ denote the wealth-dependent acceptance sets on feasible actions.
There are no disbursement decisions in this model.
The resulting GCR-MDP is:
\begin{subequations}
\label{newsvendor_order}
\begin{align}
\max_{\pi \in \Pi,\, W_{1:T+1}} \quad & \mathbb E \left[ \sum_{t=0}^T R_t^\pi \right]\\
\text{s.t.} \quad & A_t^\pi \in \mathcal A_t(W_t),\, \forall t \in [T],\\
& W_{t+1} = \gamma(W_t + R_t^\pi),\, \forall t \in [T].
\end{align}
\end{subequations}
We let $\pi_{\rm CO}$ denote the optimal policy for Eq.~\eqref{newsvendor_order}.
For $\zeta = 0$, DM must place cash orders only. For $\zeta = \infty$, we recover the classical newsvendor with unconstrained orders.

We also include two classical benchmarks for comparison.
The risk-neutral newsvendor solves:
\begin{equation}
\label{Newsvendor_classical}
    \max_{\pi \in \Pi} \varrho_{{\rm RN}}(R_{0:T}^\pi),
\end{equation}
and we let $\pi_{\rm RN}$ denote the optimal policy for Eq.~\eqref{Newsvendor_classical}.
The risk-aware newsvendor with a nested risk measure solves:
\begin{equation}
\label{Newsvendor_nested}
    \max_{\pi \in \Pi} \varrho_{{\rm N}}(R_{0:T}^\pi),
\end{equation}
and we let $\pi_{\rm N}$ denote the optimal policy for Eq.~\eqref{Newsvendor_nested}.

\subsection{Performance Analysis}

We now compare the performance of $\pi_{\rm SC}$, $\pi_{\rm WR}$, $\pi_{\rm CO}$, $\pi_{\rm RN}$, and $\pi_{\rm N}$ numerically.
We take the following MDP parameter settings: $s_{\max} = 9$, $a_{\max} = 9$, $\xi_{\max} = 9$, and $T = 10$.
We also set $w_{\min} = -540$, $w_{\max}=539$, and $z_{\max} = 1079$ for the components of the GCR.
We set the economic inputs to be $p = 3$, $c = 2$, and $h = 1$.
The random demand is i.i.d. with probability mass function given in Figure \ref{fig:prob_simple_sa}, which is truncated from a Gussian distribution with $4$ as mean and $1.2$ as variance. In Eq.~\eqref{newsvendor_standard} we pick $\beta=4$, and in Eq.~\eqref{Newsvendor_nested}, we take $\varrho_{\rm M}$ based on CVaR with risk level $\alpha = 0.4$.

\begin{figure}
    \centering
    \includegraphics[width=0.6\textwidth]{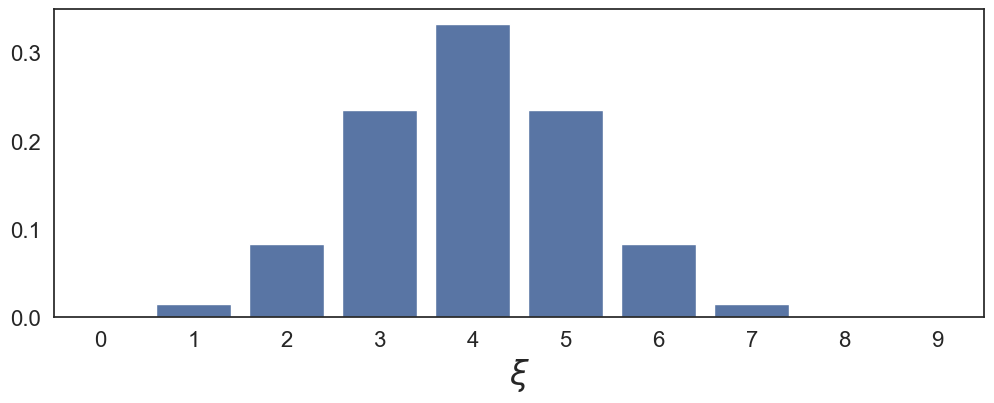}
    \caption{Probability distribution of $\xi$.}
    \label{fig:prob_simple_sa}
\end{figure}

We first study the empirical distribution of cumulative reward $\sum_{t=0}^T R_t^\pi$.
For any $\pi \in \Pi$, we simulate $\pi$ on $\omega \in \Omega$ and then obtain a sample of the corresponding cumulative reward given by $[\sum_{t=0}^T R_t^\pi](\omega)$. We obtain $50$ estimates for i.i.d. trajectories $\omega_1, \ldots, \omega_{50}$, then construct the empirical distribution of $\sum_{t=0}^T R_t^\pi$, which is shown in Figure~\ref{fig:simple_sa} and Figure~\ref{fig:reward_stream}.
In Figure~\ref{fig:simple_sa}, we note that once $\pi_{\rm WR}$ reaches a certain level of wealth, it stops placing any orders (since it just wants to achieve its target).
Figure~\ref{fig:reward_stream} gives a finer view of the distribution of the reward streams in each period.

\begin{figure}
    \centering
    \includegraphics[width=0.75\textwidth]{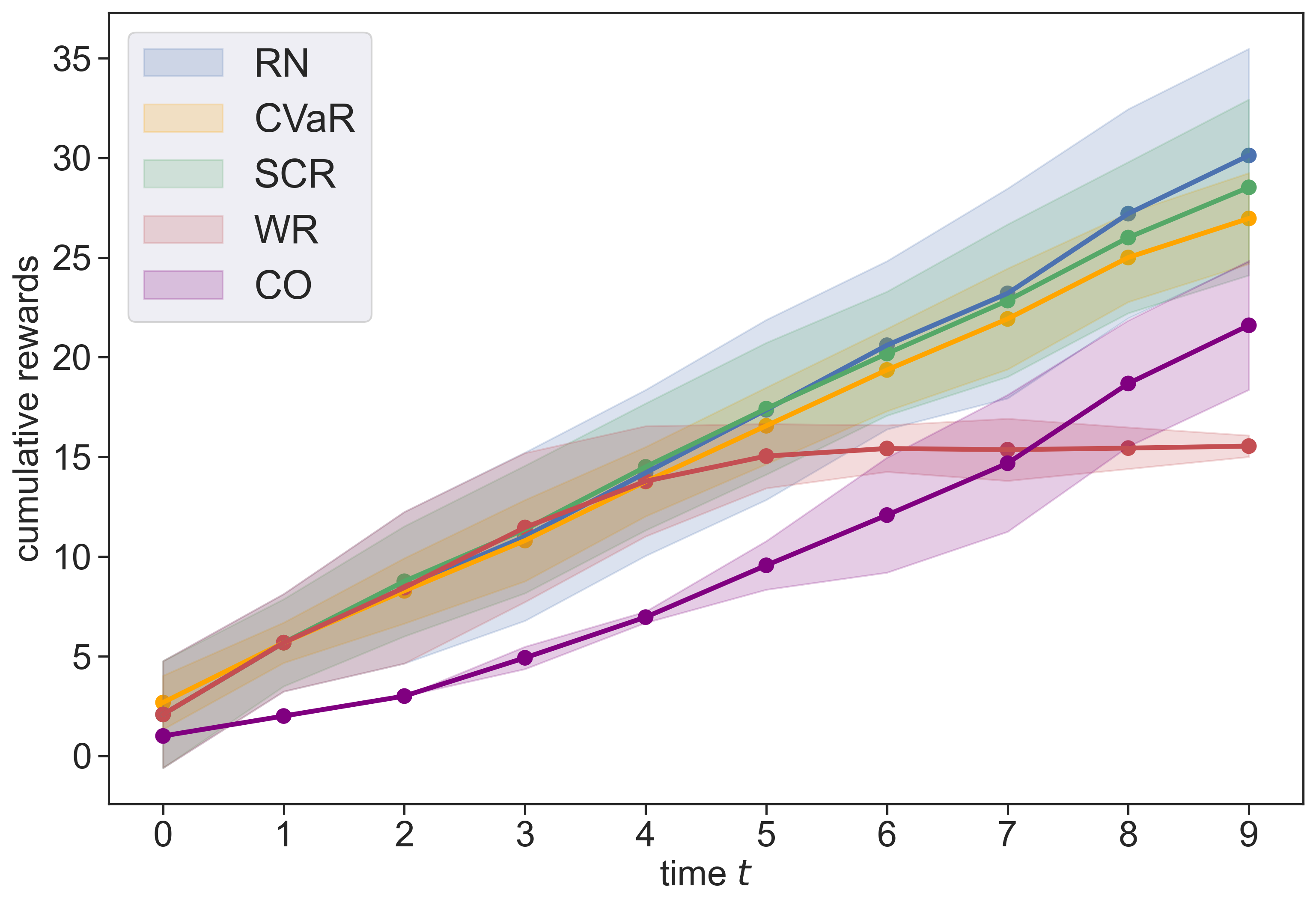}
    \caption{Cumulative payoffs for each policy (for $p = 3, c = 2, h = 1$ and $\zeta=15$).}
    \label{fig:simple_sa}
\end{figure}

\begin{figure}
    \centering
    \includegraphics[width=0.9\textwidth]{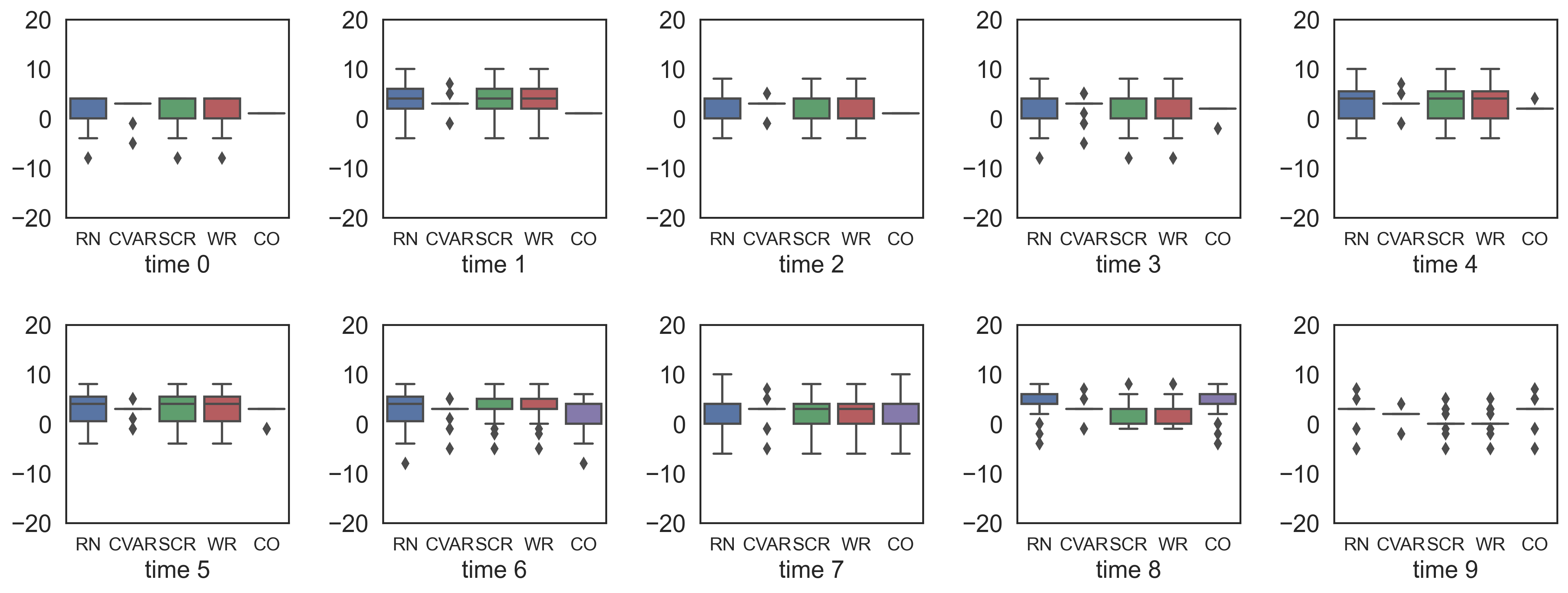}
    \caption{Payoff stream for each policy (for $p = 3, c = 2, h = 1$ and $\zeta=15$).}
    \label{fig:reward_stream}
\end{figure}

Next we consider the effects of different values of $\zeta$ on the cumulative reward for our wealth-based GCR-MDPs.
We plot the cumulative payoff $\sum_{t=0}^T R_t^\pi$ for these three models in Figure~\ref{fig:payoff_zeta}.
Figure~\ref{fig:payoff_zeta} shows that increasing $\zeta$ in Eq.~\eqref{newsvendor_wealth} results in greater cumulative reward, since $\pi_{\rm WR}$ must accumulate more wealth to meet its target. In addition, $\pi_{\rm WR}$ aims at minimizing target shortfall (with respect to the wealth reserve). This leads the sample mean of the cumulative reward to equal $\zeta$ as we observed in Figure~\ref{fig:simple_sa}. 
Increasing $\zeta$ in Eq.~\eqref{newsvendor_order} also results in greater cumulative reward for $\pi_{\rm CO}$ since it gives DM more flexibility. We also see that the standard deviation of the cumulative payoffs is increasing in $\zeta$ under $\pi_{\rm WR}$ and $\pi_{\rm CO}$. DM's flexibility with respect to $\zeta$ is time dependent. In Figure~\ref{fig:payoff_zeta}(b), $R_t^{\pi_{\rm WR}}$ becomes invariant in the tail of the trajectory. In Figure~\ref{fig:payoff_zeta}(c), $R_t^{\pi_{\rm CO}}$ has less variance in the head of the trajectory. Both windows with smaller variance become shorter when $\zeta$ increases.

\begin{figure}
\centering
\begin{subfigure}[b]{0.9\textwidth}
    \centering
    \includegraphics[width=\textwidth]{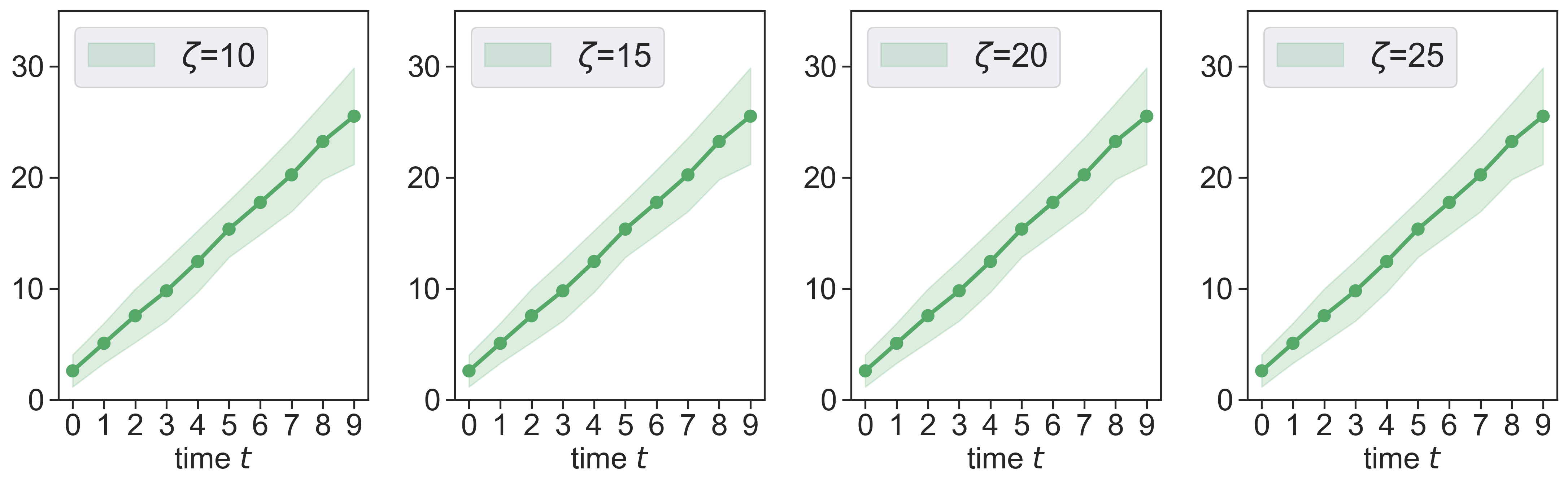}
    \caption{Standard capital requirement}
\end{subfigure}
\hfill
\begin{subfigure}[b]{0.9\textwidth}
    \centering
    \includegraphics[width=\textwidth]{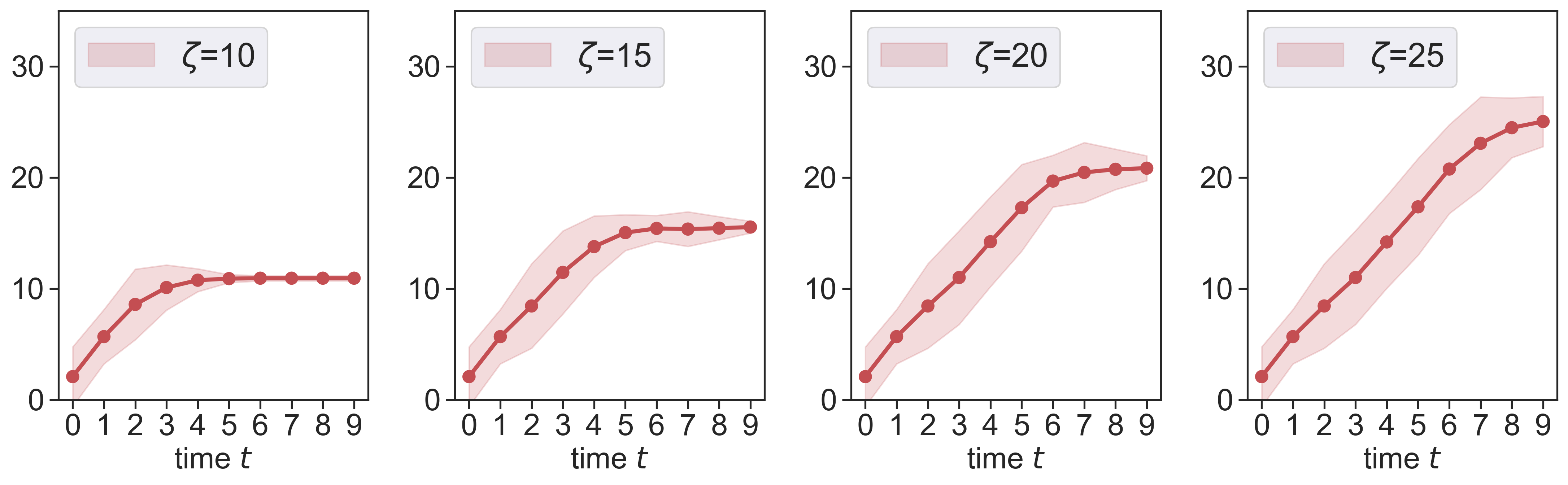}
    \caption{Wealth reserve}
\end{subfigure}
\hfill
\begin{subfigure}[b]{0.9\textwidth}
    \centering
    \includegraphics[width=\textwidth]{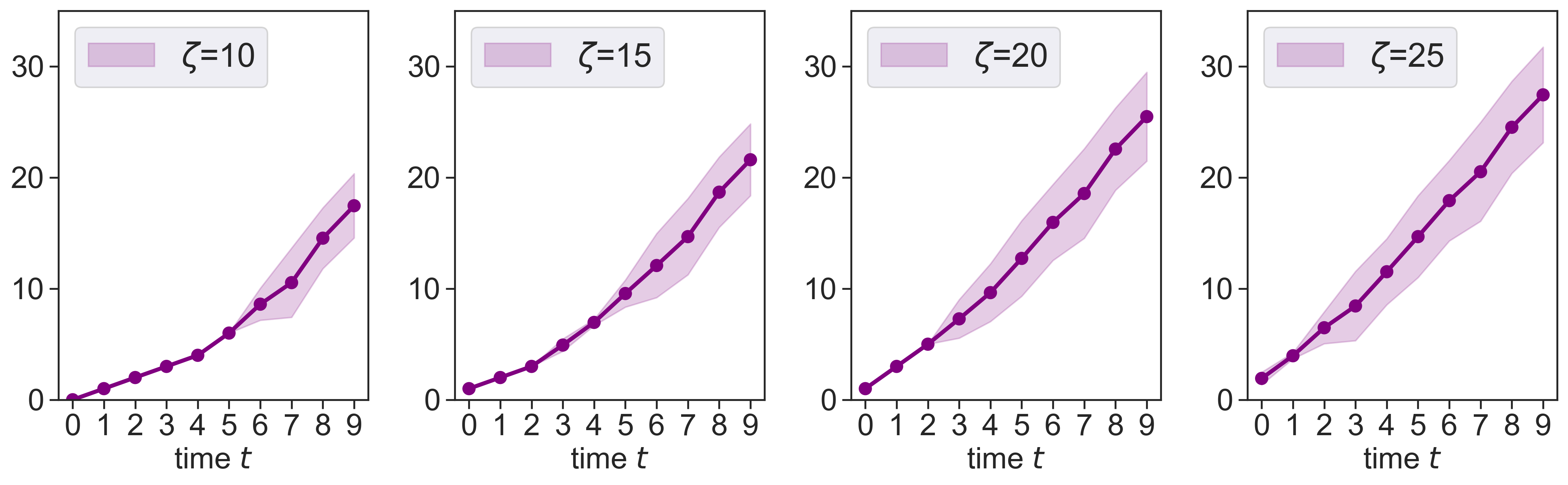}
    \caption{Cash order only}
\end{subfigure}
\caption{Cumulative payoffs under $\zeta=10,15,20,25$.}
\label{fig:payoff_zeta}
\end{figure}

We are also interested in the wealth process $W_{0:T}$ in Eq.~\eqref{newsvendor_wealth} and Eq.~\eqref{newsvendor_order}, as shown in Figure \ref{fig:wealth_trj}.
Figure~\ref{fig:wealth_trj} shows that increasing $\zeta$ in Eq.~\eqref{newsvendor_wealth} (which gives a more aggressive target) results in a steeper wealth trajectory for $\pi_{\rm WR}$.  Increasing $\zeta$ in Eq.~\eqref{newsvendor_order} (which gives DM more ordering flexibility) also results in a steeper wealth trajectory for $\pi_{\rm CO}$. 

\begin{figure}
\centering
\begin{subfigure}[b]{0.9\textwidth}
    \centering
    \includegraphics[width=\textwidth]{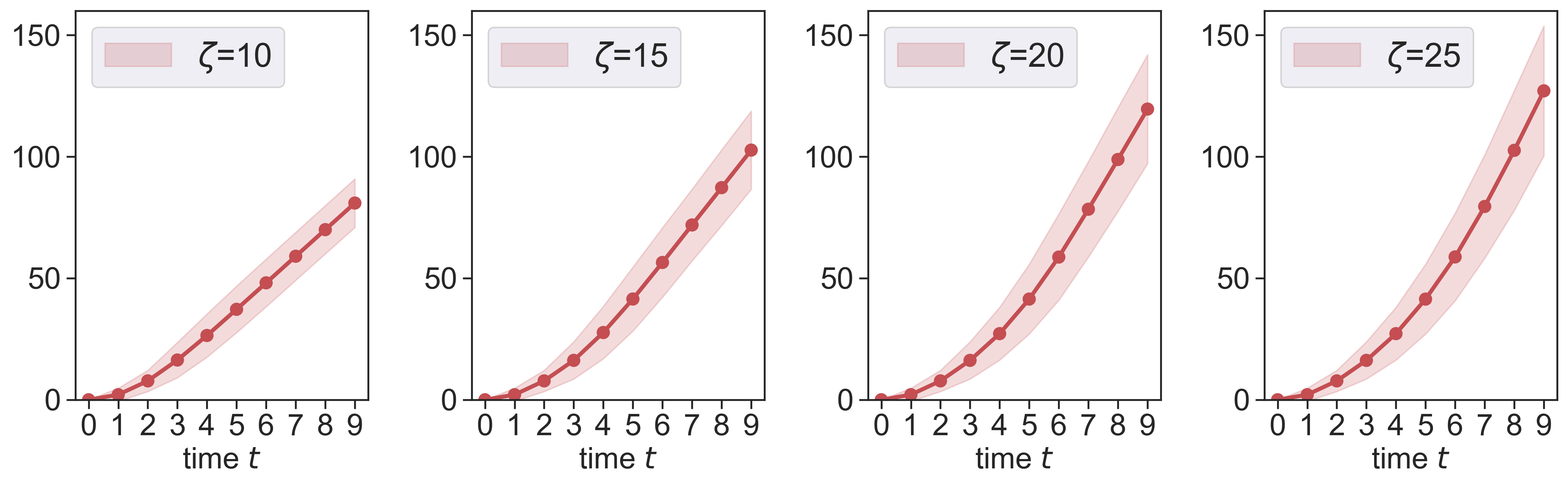}
    \caption{Wealth reserve}
\end{subfigure}
\hfill
\begin{subfigure}[b]{0.9\textwidth}
    \centering
    \includegraphics[width=\textwidth]{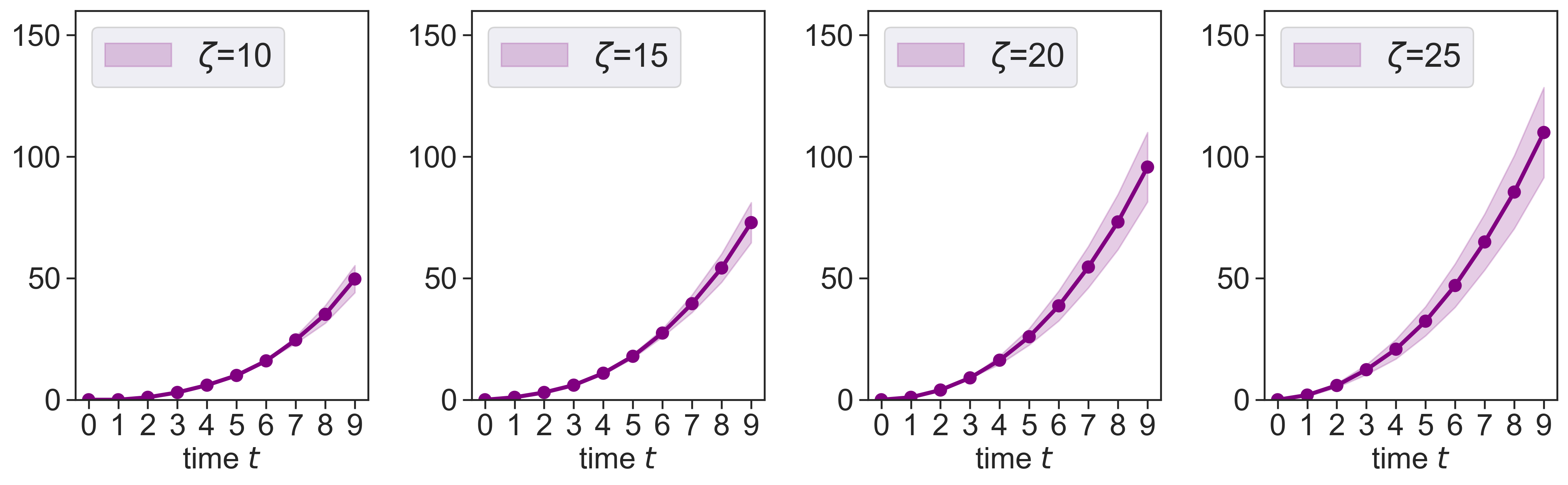}
    \caption{Cash order only}
\end{subfigure}
\caption{Wealth trajectory under $\zeta=10,15,20,25$ for Wealth reserve and Cash order only policies.}
\label{fig:wealth_trj}
\end{figure}

\section{Conclusion}\label{sec:conclusion}

In this paper, we developed the GCR and GCR-MDP framework which builds on the classical theory of CRs and extends it to MDPs.
This is a natural way to handle MDPs with constraints over the entire planning horizon (e.g., constraints on terminal wealth), and MDPs with more general objectives than the expectation (e.g., entropic risk measures, nested risk measures, worst-case expected payoff, etc.).

GCRs have several nice features.
First, the generality of the risk frontier contains many existing risk-aware MDPs as special cases and facilitates the design of new ones.
It also gives a streamlined way to elicit DM's dynamic risk preferences, which can be expressed in terms of performance targets and the available financial apparatus.
Second, it gives a general language for the temporal decomposition and time consistency of GCRs and GCR-MDPs.
In particular, we can analyze these properties in terms of inclusions with respect to the tail risk frontiers.
Finally, our framework can be used in practical computation through augmented DP based on information states.
In the majority of our examples, we only need one augmented state variable to completely express the temporal decomposition of a GCR.

Our investigation into GCR-MDPs also leads to a new perspective on risk-aware and robust MDPs.
In existing works, the optimal policy of a risk-aware MDP is often computed through an application of the standard DP recursion on an augmented state space (which minimizes the sum of current plus future costs).
We have arrived at a more general form of the DP recursion in this study.
Our cost-to-go problems are in epigraph form, where the objective is the current period premium. The dependence on the next period value function (i.e., its entire distribution) is captured by the constraints of this problem.
This result highlights the unifying structure among disparate risk-aware and robust MDP models. It also suggests possibilities for new DP operators and formulations.

In future research, we would like to investigate GCR representation results and their duality theory.
In addition, we would like to develop GCR models for infinite planning horizons, which would require a discounting mechanism within the GCR itself.
Finally, we would like to extend to GCR-MDPs with continuous state/action spaces. This would require further development to merge approximate DP algorithms (e.g., approximate value iteration) with the non-standard DP decomposition developed in this paper.

\bibliographystyle{plain}
\bibliography{References}

\appendix

\section{Additional Material for Section~\ref{sec:gcr}}

\subsection{Generalized Capital Requirements (Static Setting)}

We briefly comment on the specialization of GCRs to the static setting.
Suppose $X \in \mathcal L$ is a single stochastic payoff (e.g., terminal payoff, future portfolio value, or change in portfolio value).
Let $\mathcal A \subset \mathcal L$ be an acceptance set for $X$.
A simple CR is the minimal amount of cash that needs to be added to $X$ to make it acceptable.

\begin{defn}
\label{defn:simple_CR}
\cite[Definition 2.2]{frittelli2006risk}
The simple CR associated to $\mathcal A$ is the map
$\rho_{\mathcal A} : \mathcal L \rightarrow \Re$ defined by:
\[
\rho_{\mathcal A}(X) \triangleq \min_{z \in \Re} \{z : X + z \in \mathcal A \},\, \forall X \in \mathcal L.
\]
\end{defn}

Next, we let $z \in \Re$ be a pre-committed disbursement and $\mathcal C \subset \Re$ be the set of admissible disbursements.
Additionally, we let $\phi : \Re \rightarrow \Re$ be a disbursement cost function where DM must pay $\phi(z)$ to access the disbursement $z$. 

\begin{defn}
\label{defn:static_CR}
\cite[Definition 3.1]{frittelli2006risk}
The CR associated to $(\mathcal A, \mathcal C, \phi)$ is the map
$\rho_{\mathcal A, \mathcal C, \phi} : \mathcal L \rightarrow \Re$ defined by:
\[
\rho_{\mathcal A, \mathcal C, \phi}(X) \triangleq \min_{z \in \Re} \{\phi(z) : X + z \in \mathcal A,\, z \in \mathcal C \},\, \forall X \in \mathcal L.
\]
\end{defn}

\begin{defn}[Risk Frontier]
\label{defn:static_risk_frontier}
A set $\mathcal U \in \mathcal L \times \Re \times \Re$ is a (static) {\em risk frontier} if:

(i) $(X, z, \varphi) \in \mathcal U$ implies $(X', z, \varphi) \in \mathcal U$ for all $X' \geq X$.

(ii) $(X, z, \varphi) \in \mathcal U$ implies $(X, z, \varphi') \in \mathcal U$ for all $\varphi' \geq \varphi$.

(iii) $(X, z, \varphi) \in \mathcal U$ and $z' \geq z$ implies $(X, z', \varphi') \in \mathcal U$ for some $\varphi' \geq \varphi$.

(iv) $\mathcal U$ is closed.

\noindent
We say a triple is {\em acceptable} when $(X, z, \varphi) \in \mathcal U$.
\end{defn}

Given a payoff $X \in \mathcal L$, we define a (static) GCR to optimize over $(z, \varphi)$ to minimize the premium $\varphi$ over acceptable triples $(X, z, \varphi) \in \mathcal U$.
\begin{defn}[Generalized CR]
\label{defn:static_GCR}
Let $\mathcal U \in \mathcal L \times \Re \times \Re$ be a (static) risk frontier. The (static) generalized CR (GCR) associated to $\mathcal U$ is the map $\rho_{\mathcal U} : \mathcal L \rightarrow \Re$ defined by:
\begin{equation*}
    \rho_{\mathcal U}(X) \triangleq \min_{(z, \varphi) \in \Re \times \Re} \{\varphi : (X, z, \varphi) \in \mathcal U\},\, \forall X \in \mathcal L.
\end{equation*}
\end{defn}
\noindent
Next we give some examples of static GCRs.

\begin{example}
(i) Let $\rho(X) = - \mathbb E[X]$ be the expected payoff. Then $\rho$ is recovered by $\mathcal U = \{(X, \varphi) : \varphi \geq - \mathbb E[X]\}$. This $\mathcal U$ is also automatically convex by linearity of the expectation.

(ii) Let $u : \Re \rightarrow \Re$ be an increasing concave function and $\rho(X) = - \mathbb E[u(X)]$ be the expected utility. Then $\rho$ is recovered by $\mathcal U = \{(X, \varphi) : \varphi \geq - \mathbb E[u(X)]\}$, and this $\mathcal U$ is convex by concavity of $u$.
\end{example}

\begin{example}
(i) The simple CR $\rho_{\mathcal A}$ is recovered by $\mathcal U = \{(X, z, \varphi) : X + z \in \mathcal A,\, z \in \Re,\, \varphi \geq z\}$. When $\mathcal A$ is convex then $\mathcal U$ is also convex.

(ii) The classical CR $\rho_{\mathcal A, \mathcal C, \phi}$ is recovered by $\mathcal U = \{(X, z, \varphi) : X + z \in \mathcal A,\, z \in \mathcal C,\, \varphi \geq \phi(z)\}$.
When $\mathcal A$ and $\mathcal C$ are convex sets, and $\phi$ is a convex function, then $\mathcal U$ is convex.

(iii) Let $\nu : \mathcal L \rightarrow \mathbb R$ be convex and consider the map $\rho_{\nu, \mathcal C, \phi} : \mathcal L \rightarrow \Re$ defined as:
\[
\rho_{\nu, \mathcal C, \phi}(X) \triangleq \min_{z \in \Re} \{\phi(z) + \nu(X + z) : z \in \mathcal C \},\, \forall X \in \mathcal L.
\]
Then $\rho_{\nu, \mathcal C, \phi}$ is recovered by $\mathcal U = \{(X, z, \varphi) : z \in \mathcal C,\, \varphi \geq \phi(z) + \nu(X + z)\}$, and $\mathcal U$ is convex when both $\phi$ and $\nu$ are convex.
\end{example}

\subsection{Proof of Theorem~\ref{thm:properties}}

(i) Let $X'_{0:T} \geq X_{0:T}$, then we have
\begin{align*}
    \varrho_{\mathcal U}(X_{0:T}) = & \min_{Z_{0:T}, \Phi_{0:T}}\left\{\varphi_0 : (X_{0:T}, Z_{0:T}, \Phi_{0:T}) \in \mathcal U \right\}\\
    \geq & \min_{Z_{0:T}, \Phi_{0:T}}\left\{\varphi_0 : (X_{0:T}', Z_{0:T}, \Phi_{0:T}) \in \mathcal U \right\} = \varrho_{\mathcal U}(X_{0:T}'),
\end{align*}
since $(X_{0:T}', Z_{0:T}, \Phi_{0:T}) \in \mathcal U$ whenever $(X_{0:T}, Z_{0:T}, \Phi_{0:T}) \in \mathcal U$ by Definition~\ref{defn:risk_frontier}.

(ii) Choose $(X_{0:T}, Z_{0:T}, \Phi_{0:T}) \in \mathcal U$, $(X_{0:T}', Z_{0:T}', \Phi_{0:T}') \in \mathcal U$, and choose $\lambda \in [0, 1]$.
Then $\varrho_{\mathcal U}(X_{0:T}) \leq \varphi_0$ and $\varrho_{\mathcal U}(X_{0:T}') \leq \varphi_0'$.
By convexity of $\mathcal U$, we have
$$
\lambda (X_{0:T}, Z_{0:T}, \Phi_{0:T}) + (1 - \lambda) (X_{0:T}', Z_{0:T}', \Phi_{0:T}') \in \mathcal U,
$$
so $\varrho_{\mathcal U}(\lambda X_{0:T} + (1 - \lambda) X_{0:T}') \leq \lambda\, \varphi_0 + (1 - \lambda) \varphi_0'$. Since $(X_{0:T}, Z_{0:T}, \Phi_{0:T})$ and $(X_{0:T}', Z_{0:T}', \Phi_{0:T}')$ were arbitrary (and the upper bounds $\varrho_{\mathcal U}(X_{0:T}) \leq \varphi_0$ and $\varrho_{\mathcal U}(X_{0:T}') \leq \varphi_0'$ were arbitrary), we conclude that $\varrho_{\mathcal U}(\lambda X_{0:T} + (1 - \lambda) X_{0:T}') \leq \lambda\, \varrho_{\mathcal U}(X_{0:T}) + (1 - \lambda) \varrho_{\mathcal U}(X_{0:T}')$.

(iii) Choose $X_{0:T} \in \mathcal L_{1:T+1}$ and $\alpha > 0$, and define $\tilde X_{0:T} = \alpha\, X_{0:T}$. Then, we have
\begin{align*}
    \varrho_{\mathcal U}( \tilde X_{0:T} ) = & \min_{\tilde Z_{0:T}, \tilde \Phi_{0:T}} \{ \tilde\varphi_0 : (\alpha\, X_{0:T}, \tilde Z_{0:T}, \tilde \Phi_{0:T}) \in \mathcal U\}\\
    = & \min_{\tilde Z_{0:T}, \tilde \Phi_{0:T}} \{ \tilde \varphi_0 : \alpha\,(X_{0:T}, \tilde Z_{0:T} / \alpha, \tilde \Phi_{0:T} / \alpha) \in \mathcal U\}\\
    = & \min_{\alpha Z_{0:T}, \alpha \Phi_{0:T}} \{ \alpha\, \varphi_0 : \alpha (X_{0:T}, Z_{0:T}, \Phi_{0:T}) \in \mathcal U\}\\
    = & \alpha\, \min_{Z_{0:T}, \Phi_{0:T}} \{ \varphi_0 : (X_{0:T}, Z_{0:T}, \Phi_{0:T}) \in \mathcal U\}\\
    = & \alpha\, \varrho_{\mathcal U} (X_{0:T}),
\end{align*}
where the third equality uses the change of variables $\alpha\, Z_{0:T} = \tilde Z_{0:T}$ and $\alpha\, \Phi_{0:T} = \tilde \Phi_{0:T}$, and the fourth equality uses the fact that $\mathcal U$ is a cone.

\section{Additional Material for Section~\ref{sec:tempCR}}
\subsection{Proof of Theorem~\ref{thm:time_consistent}}

First we establish the following property of DM's preferences induced by $\varrho_{\mathcal U}$ and $\{\varrho_{\mathcal U, t:T}\}_{t \in [T]}$. For payoff streams $X_{0:T},\, X_{0:T}' \in \mathcal L_{1:T+1}$, we have $\varrho_{\mathcal U}(X_{0:T}') \leq \varrho_{\mathcal U}(X_{0:T})$ if and only if for any $(X_{0:T}, Z_{0:T}, \Phi_{0:T}) \in \mathcal U$, there exists some $(X_{0:T}', Z_{0:T}', \Phi_{0:T}') \in \mathcal U$ with $\varphi_0' \leq \varphi_0$. This claim follows by definition of $\varrho_{\mathcal U}$, since otherwise we must have $\varrho_{\mathcal U}(X_{0:T}') > \varrho_{\mathcal U}(X_{0:T})$.

Similarly, for $X_{t:T}, X_{t:T}' \in \mathcal L_{t+1:T+1}$ we have $[\varrho_{\mathcal U, t:T}(X_{t:T}')](h_t) \leq [\varrho_{\mathcal U, t:T}(X_{t:T})](h_t)$ if and only if for any $[(X_{t:T}, Z_{t:T}, \Phi_{t:T}) \vert h_t] \in \mathcal U_{t:T}(h_t)$, there exists some $[(X_{t:T}', Z_{t:T}', \Phi_{t:T}') \vert h_t] \in \mathcal U_{t:T}(h_t)$ such that $\Phi_t'(h_t) \leq \Phi_t(h_t)$; this claim follows from definition of $\varrho_{\mathcal U, t:T}$.

Now choose $0 \leq t_1 < t_2 \leq T$ and $X_{t_1:T},\, X_{t_1:T}' \in \mathcal L_{t_1:T}$ with $X_{t} = X'_{t}$ for all $t \in [t_1, t_2 - 1]$ and $\varrho_{\mathcal U, t_2: T}(X_{t_2:T}') \leq \varrho_{\mathcal U, t_2: T}(X_{t_2:T})$.
By the preceding discussion, for any $[(X_{t_2:T}, Z_{t_2:T}, \Phi_{t_2:T}) \vert h_{t_2}] \in \mathcal U_{t_2:T}(h_{t_2})$, there exists $[(X_{t_2:T}', Z_{t_2:T}', \Phi_{t_2:T}') \vert h_{t_2}] \in \mathcal U_{t_2:T}(h_{t_2})$ with $\Phi_{t_2}'(h_{t_2}) \leq \Phi_{t_2}(h_{t_2})$ for all $h_{t_2} \in \mathbb{H}_{t_2}$.

For any $[(X_{t_1:T}, Z_{t_1:T}, \Phi_{t_1:T}) \vert h_{t_1}] \in \mathcal U_{t_1:T}(h_{t_1})$, we have $\varrho_{\mathcal U, t_1: T}(X_{t_1:T}) \leq \Phi_{t_1}(h_{t_1})$.
Let $(\tilde X_{k}, \tilde Z_{k}, \tilde \Phi_{k}) = (X_{k}, Z_{k}, \Phi_{k})$ for all $k \in [t_1, t_2 - 1]$ and $(\tilde X_{k}, \tilde Z_{k}, \tilde \Phi_{k}) = (X_{k}', Z_{k}', \Phi_{k}')$ for all $k \in [t_2, T]$ be the construction in Assumption~\ref{assu:risk_frontier_consistency}.
Then, by Assumption~\ref{assu:risk_frontier_consistency} we have $[(\tilde X_{t_1:T}, \tilde Z_{t_1:T}, \tilde \Phi_{t_1:T}) \vert h_{t_1}] \in \mathcal U_{t_1:T}(h_{t_1})$ and so $\varrho_{\mathcal U, t_1: T}(\tilde X_{t_1:T}) \leq \tilde \Phi_{t_1}(h_{t_1})$.
Since $\tilde \Phi_{t_1}(h_{t_1}) = \Phi_{t_1}(h_{t_1})$, we have $[\varrho_{\mathcal U, t_1: T}(\tilde X_{t_1:T})](h_{t_1}) \leq \Phi_{t_1}(h_{t_1})$.
The original $[(X_{t_1:T}, Z_{t_1:T}, \Phi_{t_1:T}) \vert h_{t_1}] \in \mathcal U_{t_1:T}(h_{t_1})$ and $h_{t_1}$ was arbitrary, so we conclude $\varrho_{\mathcal U, t_1: T}(\tilde X_{t_1:T}) \leq \varrho_{\mathcal U, t_1: T}(X_{t_1:T})$.

\subsection{Proof of Theorem~\ref{thm:recursive}}
\label{app:thm:recursive}

\textbf{Part (i).} We first introduce some terminology from lattice theory.
Let $\mathbb L$ be a normed vector space of functions $\{w:\Omega\to\Re\}$.
We use the notation $\sqmin \mathcal W \triangleq \sqmin_{w\in\mathcal W} w$ for a set $\mathcal W \subset \mathbb L$.

\begin{defn}
(i) Let $\preceq$ be the partial order on $\mathbb L$ where $w\preceq w'$ if and only if $w(\omega)\leq w'(\omega)$ for almost all $\omega\in\Omega$.

(ii) For $\mathcal W\subset\mathbb L$, the lattice supremum is $\sqsup \mathcal W :\omega\mapsto \sup \{w(\omega):w\in\mathcal W\}$.

(iii) For $\mathcal W\subset\mathbb L$, the lattice infimum is $\sqinf \mathcal W :\omega\mapsto \inf \{w(\omega):w\in\mathcal W\}$.

(iv) For $\mathcal W\subset\mathbb L$, the lattice minimum is $\sqmin \mathcal W :\omega\mapsto \min \{w(\omega):w\in\mathcal W\}$ (when the minimum is attained for all $\omega \in \Omega$).

(v) $\mathcal W\subset\mathbb L$ is a complete lattice if $\sqsup W \in \mathcal W$ and $\sqinf W \in \mathcal W$ for all $W\subset\mathcal W$.
\end{defn}

\begin{lem}\label{lem:com}
\cite[Lemma 2.6.1, p. 115]{meyer1991banach}
Let $B \subset \Re$ be closed and bounded and let $\mathcal W\triangleq\{w:\Omega\to B\}\subset\mathbb L$, then $\mathcal W$ is a complete lattice.
\end{lem}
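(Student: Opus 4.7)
The plan is to verify directly that $\mathcal W$ is closed under pointwise lattice supremum and infimum, which is precisely what it means to be a complete lattice; this is essentially the order-completeness argument in \cite{meyer1991banach}. Since the lemma is imported from the literature, the proof amounts to unpacking definitions and invoking the order-completeness of $\Re$ together with the closed-boundedness hypothesis on $B$.

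Concretely, I would fix an arbitrary subset $W \subset \mathcal W$ and, for each $\omega \in \Omega$, examine the set $S_\omega \triangleq \{w(\omega) : w \in W\}$, which is a subset of $B \subset \Re$. Boundedness of $B$ guarantees that $\sup S_\omega$ and $\inf S_\omega$ exist as finite real numbers. Closedness of $B$ then ensures both belong to $B$: each is the limit of a sequence of elements drawn from $S_\omega \subset B$, and $B$ contains all its limit points. Applying this pointwise over $\omega$, the maps $\sqsup W : \omega \mapsto \sup S_\omega$ and $\sqinf W : \omega \mapsto \inf S_\omega$ are $B$-valued functions on $\Omega$, hence belong to $\mathcal W$ by definition. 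Since $W$ was arbitrary, every subset of $\mathcal W$ has both a supremum and an infimum inside $\mathcal W$, which is the definition of a complete lattice.

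The only point that could be delicate in a general Banach lattice setting is that when $W$ is uncountable, the pointwise supremum and infimum of functions in $\mathbb L$ need not retain the measurability or integrability required to stay in $\mathbb L$. In the context of this paper this concern is vacuous: under Assumption~\ref{assu:preliminaries} the sample space $\Omega$ is finite, so every $B$-valued function on $\Omega$ is automatically bounded and measurable. The main obstacle, if one wanted to state the result in a more general measure-theoretic framework, would be handling this measurability/integrability step carefully (which is where the Meyer reference does genuine work), but in the finite-$\Omega$ setting used throughout the paper the argument reduces to the elementary pointwise verification above.
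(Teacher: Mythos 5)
Your verification is correct, but note that the paper does not actually prove this lemma at all: it is imported wholesale from Meyer's book on Banach lattices, so there is no in-paper argument to compare against. Your elementary route --- fix $W \subset \mathcal W$, observe that each section $S_\omega = \{w(\omega) : w \in W\}$ lies in the compact set $B$, so $\sup S_\omega$ and $\inf S_\omega$ exist and lie in $\overline{B} = B$, hence the pointwise $\sqsup W$ and $\sqinf W$ are $B$-valued and therefore in $\mathcal W$ --- matches exactly the paper's definitions of $\sqsup$, $\sqinf$, and completeness, and is fully rigorous here. You also correctly diagnose where the cited result does real work: in a genuine Banach-lattice setting the pointwise supremum of an uncountable family need not be measurable or remain in $\mathbb L$, which is the content Meyer's lemma supplies; under Assumption~\ref{assu:preliminaries} the sample space $\Omega$ is finite, so that issue evaporates and your argument is complete. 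What the citation buys is generality; what your argument buys is self-containment in the regime the paper actually uses. The only pedantic loose end is the empty subset $W = \emptyset$: under the paper's literal pointwise definition $\sqsup \emptyset \equiv -\infty \notin B$, so one should either read ``for all $W \subset \mathcal W$'' as ranging over nonempty subsets or take $\sqsup \emptyset$ to be the constant function $\min B$ (which exists since $B$ is compact); this is a convention, not a gap in your reasoning.
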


\begin{lem}\label{lem:com2}
Let $\mathcal W_1,\, \mathcal W_2\subset\mathbb L$ be two complete lattices and define $w_i^* \triangleq \sqmin\mathcal W_i$. If $\mathcal W_1\subseteq\mathcal W_2$, then $w_2^*\preceq w_1^*$.
\end{lem}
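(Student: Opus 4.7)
The plan is to establish the inequality $w_2^* \preceq w_1^*$ pointwise, by working directly with the definition of the lattice minimum as a pointwise infimum over functions in each set, and then invoking completeness of $\mathcal W_1$ and $\mathcal W_2$ to guarantee that these infima are attained.

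First, I would fix $\omega \in \Omega$ and consider the real numbers $w_1^*(\omega) = \min\{w(\omega) : w \in \mathcal W_1\}$ and $w_2^*(\omega) = \min\{w(\omega) : w \in \mathcal W_2\}$. By completeness of $\mathcal W_1$ and $\mathcal W_2$ together with the definition of $\sqmin$, both minima are attained, so $w_1^*, w_2^* \in \mathbb L$ are well-defined elements of the respective sets (in particular, $w_1^* \in \mathcal W_1$ and $w_2^* \in \mathcal W_2$). Because $\mathcal W_1 \subseteq \mathcal W_2$, every function appearing in the minimization for $w_1^*(\omega)$ also appears in the minimization for $w_2^*(\omega)$, so the feasible set of real values for $w_2^*(\omega)$ contains the feasible set for $w_1^*(\omega)$. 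Taking the infimum over a larger set can only decrease the value, so $w_2^*(\omega) \leq w_1^*(\omega)$.

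Since $\omega \in \Omega$ was arbitrary, this pointwise inequality holds for almost all $\omega \in \Omega$, which is precisely the definition of $w_2^* \preceq w_1^*$ under the partial order on $\mathbb L$.

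There is no real obstacle here; this is a one-line monotonicity-of-infimum argument, and the only thing to be careful about is that completeness of the lattices ensures the minima are attained (so that $w_1^*$ and $w_2^*$ are bona fide elements of $\mathbb L$ rather than merely extended-real-valued infima). The lemma is plainly intended as a bookkeeping tool for later proofs, most likely to justify interchanging the order of minimization in the recursive decomposition of $\varrho_{\mathcal U, t:T}$ (cf.\ Theorem~\ref{thm:recursive} and Theorem~\ref{thm:History_DP_decomposition}).
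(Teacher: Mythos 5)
Your proof is correct and is essentially the paper's argument: the paper simply notes that $w_1^* \in \mathcal W_1 \subseteq \mathcal W_2$ and that $w_2^*$, being the lattice minimum of $\mathcal W_2$, lies below every element of $\mathcal W_2$; your pointwise monotonicity-of-infimum version is the same observation unfolded at each $\omega$. No gap.
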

\begin{proof}
Since $w_1^*\in\mathcal W_1\subset\mathcal W_2$, we conclude that $w_2^*\preceq w_1^*$.
\end{proof}

Next we establish our key result on the lattice structure of the function spaces that appear in our temporal decomposition.

\begin{lem}\label{lem:lattice-decomposition}
Let $\mathbb L$ be a vector space equipped with a partial order $\preceq$, and let $\mathcal W_2\subset \mathbb L$ be a complete lattice. Let $\mathcal W_1:\mathcal W_2\rightrightarrows \mathbb L$ be a correspondence such that: 

(i) $\mathcal W_1(\alpha_2)$ is a complete sublattice of $\mathbb L$ for all $\alpha_2\in\mathcal W_2$.

(ii) $\alpha_2\preceq \alpha_2'$ implies $\mathcal W_1(\alpha_2')\subseteq \mathcal W_1(\alpha_2)$ for all $\alpha_2,\alpha_2'\in\mathcal W_2$.
\noindent
Define $\mathcal W \subset \mathbb L\times\mathbb L$ via:
\[\mathcal W \triangleq\{(\alpha_1,\alpha_2)\in \mathbb L\times\mathbb L:\alpha_1\in\mathcal W_1(\alpha_2), \alpha_2\in\mathcal W_2\}.\]
Let $\alpha_2^* = \sqmin \mathcal W_2$, then
\begin{align}\label{eqn:alphamin}
    \sqmin\{\alpha_1:(\alpha_1,\alpha_2)\in\mathcal W\} = \sqmin  \mathcal W_1(\alpha_2^*).
\end{align}
\end{lem}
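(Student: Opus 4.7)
The plan is to reduce both sides of \eqref{eqn:alphamin} to the lattice minimum of the \emph{same} subset of $\mathbb L$, by exploiting the antitone behavior of $\mathcal W_1$ with respect to its argument. First I would unpack the definition of $\mathcal W$: the set
$$A \triangleq \{\alpha_1 : (\alpha_1,\alpha_2) \in \mathcal W\}$$
appearing on the left-hand side is, by definition, exactly $\bigcup_{\alpha_2 \in \mathcal W_2} \mathcal W_1(\alpha_2)$, since $(\alpha_1,\alpha_2) \in \mathcal W$ is equivalent to $\alpha_2 \in \mathcal W_2$ together with $\alpha_1 \in \mathcal W_1(\alpha_2)$. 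So the LHS of \eqref{eqn:alphamin} is $\sqmin A$, and the task reduces to showing $A = \mathcal W_1(\alpha_2^*)$.

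The key step is to establish this set equality, and it is where the hypotheses do their work. The inclusion $\mathcal W_1(\alpha_2^*) \subseteq A$ is immediate: since $\mathcal W_2$ is a complete lattice, $\alpha_2^* = \sqmin \mathcal W_2$ belongs to $\mathcal W_2$, so every $\alpha_1 \in \mathcal W_1(\alpha_2^*)$ is paired with $\alpha_2^*$ to produce a member of $\mathcal W$. For the reverse inclusion $A \subseteq \mathcal W_1(\alpha_2^*)$, I would invoke property (ii): by definition of $\sqmin$, we have $\alpha_2^* \preceq \alpha_2$ for every $\alpha_2 \in \mathcal W_2$, so property (ii) yields $\mathcal W_1(\alpha_2) \subseteq \mathcal W_1(\alpha_2^*)$; taking the union over $\alpha_2 \in \mathcal W_2$ gives $A \subseteq \mathcal W_1(\alpha_2^*)$.

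Having shown $A = \mathcal W_1(\alpha_2^*)$, the conclusion $\sqmin A = \sqmin \mathcal W_1(\alpha_2^*)$ is immediate, and property (i) guarantees that this lattice minimum exists. I do not expect a real obstacle here; the crux is simply recognizing that property (ii) is designed so that the union over all feasible $\alpha_2 \in \mathcal W_2$ collapses onto the single slice $\mathcal W_1(\alpha_2^*)$, which is what lets the two-level minimization degenerate into the one-level minimization on the right-hand side. This clean separation is precisely the lattice-theoretic fact needed elsewhere to interchange minimization orders in the recursive decomposition of $\{\varrho_{\mathcal U, t:T}\}_{t \in [T]}$.
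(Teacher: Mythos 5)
Your proof is correct. The two inclusions you establish are exactly the right ones, and together they give the set identity $\{\alpha_1:(\alpha_1,\alpha_2)\in\mathcal W\}=\bigcup_{\alpha_2\in\mathcal W_2}\mathcal W_1(\alpha_2)=\mathcal W_1(\alpha_2^*)$, from which the equality of lattice minima is immediate; hypothesis (i) guarantees the right-hand minimum is attained, and $\alpha_2^*\in\mathcal W_2$ (needed for the easy inclusion) follows from completeness of $\mathcal W_2$.

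Your route differs slightly from the paper's, and in a way that is worth noting. The paper proves the weaker two-sided inequality between the minima: it takes a minimizer $(\bar\alpha_1,\bar\alpha_2)$ of the left-hand side, observes $\bar\alpha_1\preceq\sqmin\mathcal W_1(\alpha_2^*)$ because $\sqmin\mathcal W_1(\alpha_2^*)$ is feasible for the left-hand problem, and then uses $\bar\alpha_2\succeq\alpha_2^*$ together with hypothesis (ii) and Lemma~\ref{lem:com2} to get the reverse inequality. Both arguments hinge on the same two facts ($\alpha_2^*\in\mathcal W_2$ and antitonicity of $\mathcal W_1$), but your set-equality formulation is cleaner and strictly stronger: it shows the feasible set of the two-level problem literally \emph{is} the single slice $\mathcal W_1(\alpha_2^*)$, and it avoids having to posit upfront that the left-hand lattice minimum is attained by a single pair $(\bar\alpha_1,\bar\alpha_2)$ --- a point the paper's proof glosses over and which, strictly speaking, requires knowing that the union of the $\mathcal W_1(\alpha_2)$ is itself a complete lattice (which your identity delivers for free, since it equals $\mathcal W_1(\alpha_2^*)$).
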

\begin{proof}
Define $\alpha_1^* \in \mathcal W_1$ and $\beta^*:\mathcal W_2\to \mathcal W_1$ via: 
\[\alpha_1^* \triangleq \sqmin \mathcal W_1(\alpha_2^*),\qquad \beta^*(\alpha_2) \triangleq \sqmin \mathcal W_1(\alpha_2).\]
Let $(\bar \alpha_1, \bar \alpha_2)$ be a minimizer on the left side of Eq.~\eqref{eqn:alphamin}. We clearly have $\bar \alpha_1\preceq \alpha_1^*$. We next establish the reverse inequality $\alpha_1^*\preceq \bar\alpha_1$. By definition of lattice minimum, for any $\alpha_2\in\mathcal W_2$, we have $\alpha_2\succeq \alpha_2^*$; in particular, we have $\bar\alpha_2\succeq \alpha_2^*$. Due to the monotonicity hypothesis on $\mathcal W_1$, we have $\mathcal W_1(\bar\alpha_2)\subseteq\mathcal W_1(\alpha_2^*)$ and so Lemma \ref{lem:com2} implies $\alpha_1^*\preceq \bar\alpha_1$, which yields the desired result.
\end{proof}

We can now derive the recursive decomposition of $\{\varrho_{\mathcal U, t:T}\}_{t \in [T]}$ by induction starting from period $t = T$ to $t = 0$. Recall $\Phi_{T+1}$ is fixed, and define
\begin{equation*}
\mathcal W_T(\Phi_{T+1}) \triangleq \bigcup_{Z_T\in\mathcal L_{T}} \Big\{\Phi_T: [(X_T, Z_T, \Phi_T) \vert h_T] \in \mathcal U_T(h_T, \Phi_{T+1}),\, \forall h_T\in\mathbb{H}_T\Big\},
\end{equation*}
and
\begin{equation*}
\mathcal W_t(\Phi_{t+1}) \triangleq \bigcup_{Z_t\in\mathcal L_{t}} \Big\{\Phi_t: [(X_t, Z_t, \Phi_t) \vert h_t] \in \mathcal U_t(h_t,\Phi_{t+1}),\, \forall h_t\in\mathbb{H}_t\Big\},\, \forall t \in [T-1].
\end{equation*}

\begin{rem}\label{rem:Wt}
    For any $t \in [T]$, if $\Phi_{t+1}\leq \Phi_{t+1}'$ then $\mathcal U_t(h_t, \Phi_{t+1}') \subset \mathcal U_t(h_t, \Phi_{t+1})$ by Assumption~\ref{assu:one-step}(iii). Consequently, we have $\mathcal W_t(\Phi_{t+1}')\subset \mathcal W_t(\Phi_{t+1})$.
\end{rem} 

For the terminal period, let $\sqmin \mathcal W_T(\Phi_{T+1}) (h_T)$ denote the function $\sqmin \mathcal W_T(\Phi_{T+1})$ evaluated at $h_T \in \mathbb{H}_T$.
Then we have:
\begin{align*}
    & \sqmin \mathcal W_T(\Phi_{T+1}) (h_T)\\
    = & \min \{\Phi_T(h_T) : \exists (Z_T, \Phi_T) \in \mathcal L_{T} \times \mathcal L_{T} \text{ s.t. } [(X_T, Z_T, \Phi_T) \vert h_T'] \in \mathcal U_T(h_T', \Phi_{T+1}),\, \forall h_T' \in \mathbb{H}_T \}\\
    = & \min_{z_T, \varphi_T} \{\varphi_T: ([X_T \vert h_T], z_T, \varphi_T) \in \mathcal U_T(h_T, \Phi_{T+1})\}.
\end{align*}

Thus, at period $t = T$, we have $\sqmin \mathcal W_T(\Phi_{T+1}) = \varrho_{\mathcal U, T:T}(X_T)$. At period $t = T-1$, we have
\begin{align*}
[\varrho_{\mathcal U, T-1:T}(X_{T-1:T})](h_{T-1}) = & \sqmin \{\Phi_{T-1}(h_{T-1}): \Phi_{T-1}\in \mathcal W_{T-1}(\Phi_T), \Phi_T\in\mathcal W_T(\Phi_{T+1})\}. 
\end{align*}
We next apply Lemma~\ref{lem:lattice-decomposition} and Remark \ref{rem:Wt} to conclude that 
\begin{align*}
\varrho_{\mathcal U, T-1:T}(X_{T-1:T}) = & \sqmin \mathcal W_{T-1}(\varrho_{\mathcal U, T:T}(X_T)).
\end{align*}

For the induction step, suppose the assertion is true for period $t+1$ so we have
\begin{align*}
\varrho_{\mathcal U, t+1:T}(X_{t+1:T}) = & \sqmin\{\Phi_{t+1}: \Phi_k\in\mathcal W_k(\Phi_{k+1}),\, \forall k \in [t+1, T]\}\\
= & \sqmin \mathcal W_{t+1}( \varrho_{\mathcal U, t+2, T}(X_{t+2:T})). 
\end{align*}
For period $t$, we follow the same argument as above and apply Lemma~\ref{lem:lattice-decomposition} and Remark \ref{rem:Wt} to conclude that $\varrho_{\mathcal U, t:T}(X_{t:T})  = \sqmin \mathcal W_t(\varrho_{\mathcal U, t+1, T}(X_{t+1:T}))$.

\textbf{Part (ii).} Choose $0 \leq t_1 < t_2 \leq T$ and $X_{t_1:T}, X_{t_1:T}' \in \mathcal L_{t_1:T}$ with $X_k = X_k'$ for $k \in [t_1, t_2 - 1]$ and $\varrho_{\mathcal U, t_2: T}(X_{t_2:T}') \leq \varrho_{\mathcal U, t_2: T}(X_{t_2:T})$.
Starting with period $t = t_2 - 1$, let $[(X_{t:T}, Z_{t:T}, \Phi_{t:T}) \vert h_t] \in \mathcal U_{t:T}(h_{t})$ so that
\[
\varrho_{\mathcal U, t:T}(X_{t:T}) \leq \min_{z_{t}, \varphi_{t}} \{ \varphi_t : ([X_{t} \vert h_t], z_{t}, \varphi_{t}) \in \mathcal U_{t}(h_t, \Phi_{t_2}) \},
\]
by monotonicity of $\mathcal U_{t}(h_t, \cdot)$.
Since $\varrho_{\mathcal U, t_2: T}(X_{t_2:T}') \leq \varrho_{\mathcal U, t_2: T}(X_{t_2:T})$, there exists $[(X_{t_2:T}, Z_{t_2:T}, \Phi_{t_2:T}) \vert h_{t_2}] \in \mathcal U_{t_2:T}(h_{t_2})$ and $[(X_{t_2:T}', Z_{t_2:T}', \Phi_{t_2:T}') \vert h_{t_2}] \in \mathcal U_{t_2:T}(h_{t_2})$ with $\Phi_{t_2}'(h_{t_2}) \leq \Phi_{t_2}(h_{t_2})$.
Again by monotonicity of $\mathcal U_{t}(h_t, \cdot)$, we obtain
\begin{align*}
    [\varrho_{\mathcal U, t:T}(X_{t:T}')](h_t) \leq & \min_{z_{t}, \varphi_{t}} \{ \varphi_t : ([X_{t}' \vert h_t], z_{t}, \varphi_{t}) \in \mathcal U_{t}(h_t, \Phi_{t_2}') \}\\
    \leq & \min_{z_{t}, \varphi_{t}} \{ \varphi_t : ([X_{t} \vert h_t], z_{t}, \varphi_{t}) \in \mathcal U_{t}(h_t, \Phi_{t_2}) \} = [\varrho_{\mathcal U, t:T}(X_{t:T})](h_t),
\end{align*}
using $X_t = X_t'$, so $[\varrho_{\mathcal U, t_2 - 1: T}(X_{t_2 - 1:T}')](h_{t_2-1}) \leq [\varrho_{\mathcal U, t_2 - 1: T}(X_{t_2 - 1:T})](h_{t_2-1})$. The result for earlier periods $t \in [t_1, t_2 - 2]$ follows by induction using the same reasoning.

\subsection{Proof of Theorem~\ref{thm:Information_sufficiency}}

\textbf{Part (i).} The terminal premium $\Phi_{T+1} = \bar{\Phi}_{T+1}$ only depends on the information state by Definition~\ref{defn:information}(ii).
For period $T$, we then have 
$$
[(X_T, Z_T, \Phi_T) \vert h_T] \in \mathcal U_{T}(h_T, \bar{\Phi}_{T+1}),\, \forall h_T \in \mathbb{H}_T,
$$
where $[X_T \vert h_T] = [X_T \vert s_T]$. We then construct $(\bar Z_T, \bar{\Phi}_T)$ as follows.
For all $y_T \in \mathbb Y_T$, take
$$
(\bar Z_T(y_T), \bar{\Phi}_{T}(y_T)) \in \arg \min \{\Phi_T(h_T) : [(X_T, Z_T, \Phi_T) \vert h_T] \in \mathcal U_{T}(h_T, \bar{\Phi}_{T+1}), \sigma_T(h_T) = y_T\}.
$$
We then obtain $\bar Z_T \in \bar{\mathcal L}_{T}$ and $\bar{\Phi}_T \in \bar{\mathcal L}_T$ with $\bar{\Phi}_T \leq \Phi_T$.

For the induction step for period $t$, suppose $\bar{\Phi}_{t+1} \leq \Phi_{t+1}$.
Then we have
$$
[(X_t, Z_t, \Phi_t) \vert h_t] \in \mathcal U_{t}(h_t, \Phi_{t+1}) \subset \mathcal U_{t}(h_t, \bar{\Phi}_{t+1}),\, \forall h_t \in \mathbb{H}_t,
$$
by monotonicity of $\mathcal U_{t}(h_t, \cdot)$ under Assumption~\ref{assu:one-step}(iii).
Next we construct $(\bar Z_t, \bar \varphi_t)$ as follows. For all $y_t \in \mathbb Y_t$, we take
$$
(\bar Z_t(y_t), \bar{\Phi}_{t}(y_t)) \in \arg \min \{\Phi_t(h_t) : [(X_t, Z_t, \Phi_t) \vert h_t] \in \mathcal U_{t}(h_t, \bar{\Phi}_{t+1}),\, \sigma_t(h_t) = y_t\}.
$$
We then obtain $\bar Z_t \in \bar{\mathcal L}_{t}$ and $\bar{\Phi}_t \in \bar{\mathcal L}_t$ with $\bar{\Phi}_t \leq \Phi_t$.
To conclude, we then have $(X_{0:T}, \bar Z_{0:T}, \bar{\Phi}_{0:T}) \in \bar{\mathcal U}$ with $\bar{\Phi}_{0:T} \leq \Phi_{0:T}$.

\textbf{Part (ii).} Since $\bar{\mathcal U} \subset \mathcal U$, we automatically have $\varrho_{\bar{\mathcal U}}(X_{0:T}) \geq \varrho_{\mathcal U}(X_{0:T})$.
For any $(X_{0:T}, Z_{0:T}, \Phi_{0:T}) \in \mathcal U$, we have $\varrho_{\mathcal U}(X_{0:T}) \leq \varphi_0$.
By the previous part, there exists $(X_{0:T}, \bar Z_{0:T}, \bar{\Phi}_{0:T}) \in \bar{\mathcal U}$ such that $\bar \varphi_0 \leq \varphi_0$ and $\varrho_{\bar{\mathcal U}}(X_{0:T}) \leq \bar \varphi_0$. Since $(X_{0:T}, Z_{0:T}, \Phi_{0:T}) \in \mathcal U$ was arbitrary, we conclude that $\varrho_{\bar{\mathcal U}}(X_{0:T}) \leq \varrho_{\mathcal U}(X_{0:T})$ which completes the argument.

\subsection{Proof of Theorem~\ref{thm:Information_recursive}}

To establish this result, we follow the same line of reasoning as the proof of Theorem \ref{thm:recursive} (see Appendix~\ref{app:thm:recursive}). By Definition~\ref{defn:information}(iii), for $\bar{\Phi}_{t+1} \in \bar{\mathcal L}_{t+1}$ we have $\mathcal U_t(h_t, \bar{\Phi}_{t+1}) = \bar{\mathcal U}_t(y_t, \bar{\Phi}_{t+1})$ for all $h_t, h_t' \in \mathbb{H}_t$ such that $\sigma_t(h_t) = y_t$.

Define
\begin{equation*}
\bar{\mathcal W}_T(\bar{\Phi}_{T+1}) \triangleq \bigcup_{\bar{Z}_T\in \bar{\mathcal L}_{T}} \Big\{\bar{\Phi}_T: [(X_T, \bar{Z}_T, \bar{\Phi}_T) \vert y_T] \in \bar{\mathcal U}_T(y_T, \Phi_{T+1}),\, \forall y_T\in\mathbb Y_T\Big\},
\end{equation*}
and let $\sqmin \bar{\mathcal W}_T(\Phi_{T+1}) (y_T)$ denote the function $\sqmin \bar{\mathcal W}_T(\Phi_{T+1})$ evaluated at $y_T$.
Then for any $h_T$ with $\sigma_T(h_T) = y_T$, we have
\begin{align*}
    & \varrho_{\bar{\mathcal U}, T:T}(X_T)(y_T)\\
    = & \sqmin \mathcal W_T(\Phi_{T+1}) (y_T) \\
    = & \min 
    \{\bar \Phi_T(y_T): \exists (\bar Z_T, \bar \Phi_T) \in \bar{\mathcal L}_{T} \times \bar{\mathcal L}_{T} \text{ s.t. } [(X_T, \bar Z_T, \bar \Phi_T) \vert y_T'] \in \bar{\mathcal U}_T(y_T', \bar{\Phi}_{T+1}),\, \forall y_T \in \mathbb{Y}_T\}\\
    = & \min_{z_T, \varphi_T} \{\varphi_T : ([X_T \vert s_T], z_T, \varphi_T) \in \bar{\mathcal U}_T(y_T, \bar{\Phi}_{T+1})\}.
\end{align*}
The argument for periods $t=T-1,\ldots,1$ is by induction.

\section{Additional Material for Section~\ref{sec:problem}}

\subsection{Proof of Theorem~\ref{thm:History_DP_decomposition}}
For period $T$,
define the correspondence $\mathcal V_T:\mathcal L_{T+1}\rightrightarrows\mathcal L_T$ via:

\begin{align*}
    \mathcal V_T(\Phi_{T+1}) = \bigcup_{(X_T, Z_T) \in \mathcal L_{T+1} \times \mathcal L_{T}}\Big\{\Phi_T: [(X_T, Z_T,\Phi_T) \vert h_T] \in\mathcal Q_T(h_T, \Phi_{T+1}),\, \forall h_T\in\mathbb{H}_T\Big\},
\end{align*}
where $\Phi_{T+1} \equiv V_{T+1}$ is fixed. For periods $t \in [T - 1]$,
define the correspondence $\mathcal V_t:\mathcal L_{t+1}\rightrightarrows\mathcal L_t$ via
\begin{align*}
    \mathcal V_t( \Phi_{t+1}) = \bigcup_{(X_t, Z_t) \in \mathcal L_{t+1} \times \mathcal L_{t}}\Big\{\Phi_t: [(X_t, Z_t,\Phi_t) \vert h_t] \in \mathcal Q_t(h_t,\Phi_{t+1}),\, \forall h_t\in\mathbb{H}_t\Big\}.
\end{align*}
By Lemma~\ref{lem:com}, $\mathcal V_t( \Phi_{t+1})$ is a complete lattice. By construction and Assumption~\ref{assu:one-step}(iii), if $\Phi_{t+1}\leq \Phi_{t+1}'$, then $\mathcal V_t( \Phi_{t+1}')\subset \mathcal V_t( \Phi_{t+1})$. 
Furthermore, the decision sub-problem starting at period $t \in [T]$ is equivalent to:
\begin{align*}
    V_t = \sqmin_{\Phi_{t:T}} \quad & \Phi_t\\
    \text{s.t.} \quad & \Phi_k \in \mathcal V_k( \Phi_{k+1}),\, \forall k \in [t, T].
\end{align*}
We then apply Lemma~\ref{lem:lattice-decomposition} recursively starting in period $T-1$ to conclude that 
\begin{align*}
    V_t = & \sqmin \mathcal V_t(V_{t+1}),\, \forall t \in [T],
\end{align*}
starting with $V_{T+1}$ as input.
The lattice minimum then yields the recursion in Eq.~\eqref{eqn:Vthistory}.

\subsection{Proof of Theorem~\ref{thm:History_policy}}

We establish optimality and time consistency of $\pi^*$ by induction. Starting in period $T$, $\pi_T^*$ is by definition the optimal period $T$ policy. Consequently,
$$
(R_T(s_t,\pi^{*a}_T(h_T)), \pi^{*z}_T(h_T), V_T(h_T)) \in \mathcal U_T(h_T, V_{T+1}),\, \forall h_T\in\mathbb{H}_T.
$$
This inclusion implies that $\pi^*_T$ is the optimal policy for $\mathcal U_{T}(h_T,V_{T+1})$.

Now suppose that $\pi_{t+1:T}^*$ is the optimal policy for periods $[t+1,T]$. Then, the minimum achievable premium on history $h_{t+1} \in \mathbb{H}_{t+1}$ is $V_{t+1}(h_{t+1})$ and it is achieved using $\pi_{t+1:T}^*$. By Assumption~\ref{assu:one-step}(iii), we have that $\mathcal U_t(h_t,\Phi_{t+1}) \subset \mathcal U_t(h_t,V_{t+1})$ holds for all feasible $\Phi_{t+1} \geq V_{t+1}$. By definition, $\pi^*_t(h_t)$ minimizes the premium in period $t$ and achieves the objective value $V_t(h_t)$ for all $h_t\in\mathbb{H}_t$. Consequently, $\pi^*_{t:T}$ is an optimal policy and minimizes the premium for the risk frontier $\mathcal U_{t:T}(h_t)$ due to Lemma~\ref{lem:lattice-decomposition}. This completes the induction step. We conclude that $\pi^*$ is both an optimal policy and time-consistent.

\subsection{Proof of Theorem~\ref{thm:Information_DP_decomposition}}
This result follows by Theorem \ref{thm:History_DP_decomposition}. From Definition~\ref{defn:information}(ii), we have $\Phi_{T+1} = \bar{\Phi}_{T+1}$.
From Definition~\ref{defn:information}(iii), we see that $\mathcal Q_t(h_t,\bar{\Phi}_{t+1}) = \bar{\mathcal Q}_t(y_t,\bar\Phi_{t+1})$ for all $h_t \in \mathbb{H}_t$ such that $\sigma_t(h_t) = y_t$, for all $t\in[T]$. Consequently, for period $T$ we have
\begin{align*}
    \mathcal V_T(\bar{\Phi}_{T+1}) & = \bigcup_{(X_T, Z_T) \in \mathcal L_{T+1} \times \mathcal L_{T} }\Big\{\Phi_T: [(X_T, Z_T,\Phi_T) \vert h_T] \in\mathcal Q_T(h_T, \Phi_{T+1}),\, \forall h_T\in\mathbb{H}_T\Big\}\\
    & = \bigcup_{(X_T, Z_T) \in \bar{\mathcal L}_{T+1} \times \bar{\mathcal L}_{T}}\Big\{\bar{\Phi}_T: [(X_T, \bar Z_T,\bar{\Phi}_T) \vert y_T]\in\bar{\mathcal Q}_T(y_T, \bar{\Phi}_{T+1}),\, \forall y_T\in\mathbb Y_T\Big\}.
\end{align*}
From Theorem \ref{thm:History_DP_decomposition}, $\bar V_T = \sqmin \mathcal V_T(\bar V_{T+1}):\mathbb Y_T\to\Re$. The desired result then follows by induction. 

\subsection{Proof of Theorem~\ref{thm:Information_policy}}
The proof of Theorem \ref{thm:Information_policy} follows the proof of Theorem \ref{thm:History_policy}, except that we replace history with the information state.


\section{Additional Material for Section~\ref{sec:applications}}
\label{sec:applications_Appendix}

We follow three steps to establish the GCR framework for each of our applications.

\textbf{Step one: Formulate risk frontier.}
First, we formulate $\mathcal U$, starting with conditions on the premium schedule, then any conditions on the disbursements, and finally any conditions on auxiliary variables (e.g., wealth).

\textbf{Step two: Information state.} 
Next we obtain the tail risk frontiers $\{\mathcal U_t(h_t, \Phi_{t+1})\}_{t \in [T]}$, which follow immediately from the form of $\mathcal U$ in all of our applications. We can then write $\mathcal U$ in the form of Eq.~\eqref{eq:risk_frontier_recursive}, and time consistency follows immediately by Theorem~\ref{thm:recursive}.
We then identify the information state $\mathcal I = \{ (\mathbb Y_t, \sigma_t) \}_{t \in [T+1]}$ for evaluation of Markov payoff streams, obtain the compressed risk frontiers $\{\bar{\mathcal U}_t(y_t, \bar{\Phi}_{t+1})\}_{t \in [T]}$, and the modified risk frontier $\bar{\mathcal U}$ in the form of Eq.~\eqref{eq:bar_risk_frontier_recursive}.

\textbf{Step three: Attainment.}
We verify Assumption~\ref{assu:compact} using the specific form of each risk frontier.
Under Assumption~\ref{assu:preliminaries}, $\mathcal L_{0:T+1}$ is isomorphic to a Euclidean space and we can map $X_{0:T} \in \mathcal L_{1:T+1}$ to $X_{0:T} \equiv (X_0(\omega), \ldots, X_T(\omega))_{\omega \in \Omega}$ which enumerates its realizations as a vector.
We will verify Assumption~\ref{assu:compact} by showing directly that all $(Z_{0:T}, \Phi_{0:T}) \in \mathcal{U}_{\tau}(X_{0:T})$ are lower and upper bounded, then compactness follows by closedness of $\mathcal U$ under Definition~\ref{defn:risk_frontier}(iv).

We use the following result to upper bound the premium schedule for these applications.
\begin{lem}
\label{lem:upper_bound}
Suppose $\mathbb E[\Phi_t] < \infty$ for all $t \in [T]$ and $\Phi_{0:T}$ is lower bounded, then $\Phi_{0:T}$ is upper bounded (i.e., $\Phi_t(h_t)$ is uniformly upper bounded for all $h_t \in \mathbb{H}_t$ and $t \in [T]$).
\end{lem}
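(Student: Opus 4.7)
The plan is to exploit Assumption~\ref{assu:preliminaries}, which guarantees that $\Omega$ and therefore each history space $\mathbb{H}_t$ are finite, so that the collection of histories with positive probability admits a uniform lower bound
\[
p_{\min} \triangleq \min\{\mathbb P(h_t) : h_t \in \mathbb{H}_t,\, \mathbb P(h_t) > 0,\, t \in [T]\} > 0,
\]
which is well-defined as a minimum over a finite set of strictly positive numbers. The second ingredient is the observation that once $\Phi_{0:T}$ has a uniform lower bound $L \in \Re$, the value $\Phi_t$ can take at any single history is controlled through its unconditional expectation.

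Fixing $t \in [T]$ and $h_t \in \mathbb{H}_t$ with $\mathbb P(h_t) > 0$, I would write
\[
\Phi_t(h_t)\, \mathbb P(h_t) = \mathbb E[\Phi_t] - \sum_{h_t' \neq h_t} \Phi_t(h_t')\, \mathbb P(h_t') \leq \mathbb E[\Phi_t] - L(1 - \mathbb P(h_t)),
\]
since $\Phi_t(h_t') \geq L$ for every $h_t'$. Dividing through by $\mathbb P(h_t) \geq p_{\min}$ and rearranging yields an explicit upper bound on $\Phi_t(h_t)$ depending only on $\mathbb E[\Phi_t]$, $L$, and $p_{\min}$. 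Histories with zero probability can be ignored, since $\Phi_t \in \mathcal L_t$ is identified only up to $\mathbb P$-null sets.

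Finally, since the planning horizon $[T]$ is finite and $\mathbb E[\Phi_t] < \infty$ for each $t$, the quantity $\max_{t \in [T]} \mathbb E[\Phi_t]$ is finite, and taking the maximum of the pointwise bounds derived above across $t \in [T]$ produces the uniform upper bound claimed. I do not anticipate a genuine obstacle here: the argument is essentially a one-sided Markov-type inequality made possible by the finiteness of the sample space, and the only care required is the book-keeping of the constants $L$ and $p_{\min}$ together with the dismissal of null histories.
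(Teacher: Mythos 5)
Your argument is correct and follows essentially the same route as the paper's proof: isolate $\Phi_t(h_t)\,\mathbb P(h_t)$ inside the finite sum defining $\mathbb E[\Phi_t]$, control the remaining terms via the uniform lower bound, and divide by $\mathbb P(h_t) \geq p_{\min} > 0$, which exists by finiteness of $\Omega$. If anything, your inequality $\Phi_t(h_t)\,\mathbb P(h_t) \leq \mathbb E[\Phi_t] - L(1-\mathbb P(h_t))$ handles a possibly negative lower bound $L$ a bit more carefully than the paper's corresponding step.
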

\begin{proof}
First, we suppose $\mathbb P(h_t) > 0$ for all $h_t \in \mathbb{H}_t$ and $t \in [T]$.
If $\mathbb P(h_{t}') = 0$ for any $h_{t}' \in \mathbb{H}_t$, then we can simply delete the corresponding trajectory with history $h_t'$ from $\Omega$ without changing the problem.
Let $\kappa_L > -\infty$ be a lower bound on $\Phi_{0:T}$ where $\Phi_t(h_t) \geq \kappa_L$ for all $h_t \in \mathbb{H}_t$ and $t \in [T]$.

For $t \in [T]$, we have $\mathbb E[\Phi_{t}] = \sum_{h_{t} \in \mathbb{H}_{t}} \Phi_{t}(h_{t}) \mathbb P(h_{t})$. For any fixed $h_t \in \mathbb{H}_t$, we then have
$$
\mathbb E[\Phi_{t}] = \Phi_{t}(h_{t}) \mathbb P(h_{t}) + \sum_{h_{t}' \in \mathbb{H}_t,\, h_{t}' \ne h_{t}} \Phi_{t}(h_{t}') \mathbb P(h_{t}') \leq \kappa_{U}
$$
for some $\kappa_{U} < \infty$.
Using the fact that $\Phi_{0:T}$ is lower bounded, we then have
$$
\left( \min_{h_{t} \in \mathbb{H}_{t}} \mathbb P(h_{t}) \right) \left( \Phi_t(h_t) + \sum_{h_{t}' \ne h_{t}} \kappa_L \right) \leq \left( \min_{h_{t} \in \mathbb{H}_{t}} \mathbb P(h_{t}) \right) \sum_{h_{t} \in \mathbb{H}_t} \Phi_{t}(h_{t}) \leq \mathbb E[\Phi_{t}] \leq \kappa_{U}.
$$
Since $\min_{h_{t} \in \mathbb{H}_{t}} \mathbb P(h_{t}) > 0$, the above inequalities imply that $\Phi_t(h_t)$ is upper bounded.
This argument is identical for all $t \in [T]$ and $h_t \in \mathbb{H}_t$ so $\Phi_{0:T}$ is upper bounded.
\end{proof}

For a fixed payoff stream $X_{0:T} \in \mathcal L_{1:T+1}$, we use the fact that all $\{X_t\}_{t \in [T]}$ are bounded and that $X_t \in [\underline{X}_t, \bar{X}_t]$ holds almost surely for some $-\infty < \underline{X}_t \leq \bar{X}_t < \infty$ for all $t \in [T]$.

\subsection{Original State Space}

\subsubsection{Risk-neutral}

\paragraph*{Step one: Formulate risk frontier.}
The risk frontier for $\varrho_{{\rm RN}}$ is determined by the constraints:
\begin{subequations}
\label{eq:frontier_risk-neutral}
\begin{align}
\Phi_t(h_t) \geq & \mathbb E[\Phi_{t+1} - X_t \vert h_t],\, \forall h_t \in \mathbb{H}_t,\, t \in [T],\\
\Phi_{T+1} \equiv & 0.
\end{align}
\end{subequations}

\paragraph*{Step two: Information state.} 
For period $t \in [T]$, the tail risk frontier $\mathcal U_t(h_t, \Phi_{t+1})$ for $(X_{t}, \varphi_{t})$ is determined by the constraints:
$$
\varphi_t \geq \mathbb E[\Phi_{t+1} - X_t \vert h_t].
$$
Suppose we are on history $h_t \in \mathbb{H}_t$ and we are given $\bar{\Phi}_{t+1} \in \bar{\mathcal L}_{t+1}$.
Acceptable $(X_t, \varphi_t)$ must then satisfy the inequality $\varphi_t \geq \mathbb E[\bar{\Phi}_{t+1}(f_t(s_t, \xi_t)) - X_t(s_t, \xi_t)]$, which only depends on $s_t$.

\paragraph*{Step three: Attainment.}
Using $\Phi_{T+1} \equiv 0$, we have $\Phi_t(h_t) \geq - \mathbb E[\sum_{k=t}^T X_k \vert h_t] \geq - \sum_{k=t}^T \bar X_k$ for all $h_t \in \mathbb{H}_t$ and $t \in [T]$.
It follows that $\Phi_{0:T}$ is lower bounded.

Now suppose $\varphi_0 \leq \tau$. For period $t = 1$, we have $\mathbb E[\Phi_1 \vert h_0] \leq \tau + \mathbb E[X_0 \vert h_0] \leq \tau + \bar{X}_0$ for all $h_0 \in \mathbb{H}_0$. Taking unconditional expectations gives $\mathbb E[\Phi_1] \leq \tau + \bar{X}_0$. Similarly, for period $t+1$ we have $\mathbb E[\Phi_{t+1} \vert h_t] \leq \Phi_t(h_t) + \mathbb E[X_t \vert h_t] \leq \Phi_t(h_t) + \bar{X}_t$ for all $h_t \in \mathbb{H}_t$. Taking unconditional expectations gives $\mathbb E[\Phi_{t+1}] \leq \mathbb E[\Phi_t] + \bar{X}_t$ and by induction we obtain $\mathbb E[\Phi_{t+1}] \leq \sum_{k=0}^t \mathbb E[X_t] + \tau \leq \sum_{k=0}^t \bar{X}_t + \tau$. It follows that $\mathbb E[\Phi_t]$ for all $t \in [T]$ are upper bounded.
Finally, $\Phi_{0:T}$ is upper bounded by Lemma~\ref{lem:upper_bound}.



\subsubsection{Entropic risk measure}

\paragraph*{Step one: Formulate risk frontier.}
The risk frontier for $\varrho_{{\rm Ent}}$ is determined by the constraints:
\begin{subequations}
\label{eq:frontier_entropic}
\begin{align}
\Phi_t(h_t) \geq & \mathbb E[e^{- \gamma\, X_t} \Phi_{t+1} \vert h_t],\, \forall h_t \in \mathbb{H}_t,\, t \in [T],\\
\Phi_{T+1} \equiv & 1.
\end{align}
\end{subequations}
We check that Definition~\ref{defn:risk_frontier}(i) (monotonicity in payoffs) is satisfied for Eq.~\eqref{eq:frontier_entropic}. For $t \in [T]$, we have
$$
\varphi_t + \varphi_t' \geq \mathbb E[e^{-\gamma X_t}\Phi_{t+1}] + \varphi_t' \geq \mathbb E[e^{-\gamma X_t}(\Phi_{t+1} + \varphi_t')] \geq \mathbb E[e^{-\gamma X_t}(\Phi_{t+1} + \Phi_{t+1}')],
$$
using $X_t \geq 0$ by assumption and $\varphi_t' \geq \Phi_{t+1}'$.

\paragraph*{Step two: Information state.}
For period $t \in [T]$, the tail risk frontier $\mathcal U_t(h_t, \Phi_{t+1})$ for $(X_{t}, \varphi_{t})$ is determined by the constraints:
$$
\varphi_t \geq \mathbb E[e^{- \gamma\, X_t} \Phi_{t+1} \vert h_t].
$$
For each period $t \in [T]$, suppose we are on history $h_t \in \mathbb{H}_t$ and we are given $\bar{\Phi}_{t+1} \in \bar{\mathcal L}_{t+1}$.
Acceptable $(X_t, \varphi_t)$ must then satisfy the inequality $\varphi_t \geq \mathbb E[e^{- \gamma\, X_t(s_t, \xi_t)} \bar{\Phi}_{t+1}(f_t(s_t, \xi_t))]$, which only depends on $s_t$.

\paragraph*{Step three: Attainment.}
The period $t \in [T]$ premium satisfies $\Phi_t(h_t) \geq \mathbb E[e^{- \gamma \sum_{k=t}^{T} X_k } \vert h_t] \geq e^{- \gamma \sum_{k=t}^{T} \bar X_k }$ for all $h_t \in \mathbb{H}_t$.
In particular, the period $t = 0$ premium satisfies $\varphi_0 \geq \mathbb E[e^{- \gamma \sum_{t = 0}^{T} X_t }] \geq e^{- \gamma \sum_{t = 0}^{T} \bar X_t }$.
It follows that $\Phi_{0:T}$ is lower bounded.

Now suppose $\varphi_0 \leq \tau$. For period $t \in [T]$, we have $\Phi_t(h_t) \geq \mathbb E[e^{- \gamma\, X_t } \Phi_{t+1} \vert h_t] \geq \mathbb E[e^{- \gamma\, \bar{X}_t } \Phi_{t+1} \vert h_t]$ for all $h_t \in \mathbb{H}_t$. Taking unconditional expectations gives $\mathbb E[\Phi_{t+1}] \leq e^{\gamma\, \bar{X}_t} \mathbb E[ \Phi_{t}]$.
Using $\varphi_0 \leq \tau$, by induction we obtain $\mathbb E[\Phi_{t+1}] \leq \tau\,e^{\gamma \sum_{k=0}^{t} \bar{X}_k }$ for all $t \in [T]$.
It follows that $\Phi_{0:T}$ is upper bounded by Lemma~\ref{lem:upper_bound}.

\subsubsection{Nested risk measures}

\paragraph*{Step one: Formulate risk frontier.}
The risk frontier for $\varrho_{{\rm N}}$ is determined by the constraints:
\begin{subequations}
\label{eq:frontier_nested}
    \begin{align}
        \Phi_t(h_t) \geq & Z_t(h_t),\, \forall h_t \in \mathbb{H}_t,\, t \in [T],\\
        \Phi_{T+1} \equiv & 0,\\
        [X_t - \Phi_{t+1}\vert h_t] + Z_t(h_t) \in & \mathcal A_t,\, \forall h_t \in \mathbb{H}_t, t \in [T].
    \end{align}
\end{subequations}
%

\paragraph*{Step two: Information state.}
For period $t \in [T]$, the tail risk frontier $\mathcal U_t(h_t, \Phi_{t+1})$ for $(X_{t}, z_{t}, \varphi_{t})$ is determined by the constraints:
\begin{equation*}
\varphi_t \geq z_t,\, [X_t - \Phi_{t+1} \vert h_t] + z_t \in \mathcal A_t. 
\end{equation*}
For each period $t \in [T]$, suppose we are on history $h_t \in \mathbb{H}_t$ and we are given $\bar{\Phi}_{t+1} \in \bar{\mathcal L}_{t+1}$.
Acceptable $(X_t, z_t, \varphi_t)$ must then satisfy:
\begin{equation*}
\varphi_t \geq z_t, [X_t - \bar{\Phi}_{t+1} \vert s_t] + z_t \in \mathcal A_t,
\end{equation*}
which only depend on $s_t$.

\paragraph*{Step three: Attainment.}
For period $T$, for all $h_T \in \mathbb{H}_T$ we have $X_T(h_T) + Z_T(h_T) \in \mathcal A_T$ and so $Z_T(h_T)$ is lower bounded by $\rho_{\mathcal A_T}(X_T(h_T))$. Hence, $\Phi_T(h_T)$ is also lower bounded by $\rho_{\mathcal A_T}(X_T(h_T))$. We then get the uniform lower bound $\underline{\varphi}_T \equiv \rho_{\mathcal A_T}(\bar{X}_T)$ for $Z_T$ and $\Phi_T$.
For period $T-1$, we have
\begin{align*}
    Z_{T-1}(h_{T-1}) \geq & \min_{z \in \mathbb R} \{z : [X_{T-1} \vert h_{T-1}] - [\Phi_T \vert h_{T-1}] + z \in \mathcal A_{T-1}\}\\
    \geq & \min_{z \in \mathbb R} \{z : \bar{X}_{T-1} - \underline{\varphi}_T + z \in \mathcal A_{T-1}\}\\
    = & \rho_{\mathcal A_{T-1}}(\bar{X}_{T-1} - \underline{\varphi}_T).
\end{align*}
We then get the uniform lower bound $\underline{\varphi}_{T-1} \equiv \rho_{\mathcal A_{T-1}}(\bar{X}_{T-1} - \underline{\varphi}_T)$ for $Z_{T-1}$ and $\Phi_{T-1}$.
Proceeding by induction, for period $t$, we have:
\begin{align*}
    z_{t} \geq & \min_{z \in \mathbb R} \{z : [X_{t} \vert h_{t}] - [\Phi_{t+1} \vert h_{t}] + z \in \mathcal A_{t}\}\\
    \geq & \min_{z \in \mathbb R} \{z : \bar{X}_{t} - \underline{\varphi}_{t+1} + z \in \mathcal A_{t}\}\\
    = & \rho_{\mathcal A_{t}}(\bar{X}_{t} - \underline{\varphi}_{t+1}).
\end{align*}
We then get the uniform lower bound $\underline{\varphi}_{t} \equiv \rho_{\mathcal A_{t}}(\bar{X}_{t} - \underline{\varphi}_{t+1})$ for both $Z_t$ and $\Phi_{t}$, for all $t \in [T]$.

Now suppose $\varphi_0 \leq \tau$.
For period $t = 0$, we have $X_0 - \Phi_{1} + z_0 \in \mathcal A_0$ and $z_0 \leq \varphi_0 \leq \tau$. We then see that $\Phi_1$ is upper bounded by assumption that $\mathcal A_0$ is pointed.
By induction, for period $t$ suppose $\Phi_t$ is upper bounded. We have $X_t - \Phi_{t+1} + Z_t \in \mathcal A_t$ and $Z_t \leq \Phi_t$. Then $Z_t$ is upper bounded by $\Phi_t$, and $\Phi_{t+1}$ is again upper bounded by assumption that $\mathcal A_t$ is pointed.

\subsection{Wealth-based}

\subsubsection{Standard capital requirement}

\paragraph*{Step one: Formulate risk frontier.}
The risk frontier for $\varrho_{\mathcal A_{\times}}$ is determined by the constraints:
\begin{subequations}
\label{eq:frontier_standard}
    \begin{align}
        \varphi_0 \geq & w_0 + \mathbb E[\Phi_1],\\
        \Phi_t(h_t) \geq & \mathbb E[\Phi_{t+1} \vert h_t],\, \forall h_t \in \mathbb{H}_t,\, t \in [1, T],\\
        \Phi_{T+1} \equiv & \delta_{\{W_{T+1} \geq 0\}},\\
        [X_t \vert h_t] + Z_t(h_t) \in & \mathcal A_t,\, \forall h_t \in \mathbb{H}_t,\, t \in [T],\\
        w_0 \in & \Re,\, W_{t+1} = W_t - Z_t,\, \forall t \in [T],\, W_{T+1} \geq 0.
    \end{align}
\end{subequations}

\paragraph*{Step two: Information state.}
The period $T$ tail risk frontier $\mathcal U_{T}(h_T, \Phi_{T+1})$ for triples $(X_{T}, z_{T}, \varphi_{T})$ is determined by the constraints: $[X_T \vert h_T] + z_T \in \mathcal A_T$ and $W_T(h_T) - z_T \geq 0$.
In period $T$ on history $h_T \in \mathbb{H}_T$ (where $\Phi_{t+1}$ is fixed), acceptable $(X_T, z_T, \varphi_T)$ must satisfy: $[X_T \vert s_T] + z_T \in \mathcal A_T$ and $w_T - z_T \geq 0$, which only depend on $y_T$.

The period $t \in [1, T-1]$ tail risk frontier $\mathcal U_{t}(h_t, \Phi_{t+1})$ for triples $(X_{t}, z_{t}, \varphi_{t})$ is determined by the constraints: $[X_t \vert h_t] + z_t \in \mathcal A_t$ and $\varphi_t \geq \mathbb E[\Phi_{t+1} \vert h_t]$.
For period $t \in [1, T-1]$ on history $h_t \in \mathbb{H}_t$, suppose we are given $\bar{\Phi}_{t+1} \in \bar{\mathcal L}_{t+1}$. Then, acceptable $(X_t, z_t, \varphi_t)$ must satisfy:
\begin{equation*}
    \varphi_t \geq \mathbb E[\bar{\Phi}_{t+1}(g_t(y_t, z_t, \xi_t))],\, [X_t \vert s_t] + z_t \in \mathcal A_t,
\end{equation*}
which only depend on $y_t$. 

The period $t = 0$ tail risk frontier $\mathcal U_{0}(h_0, \Phi_{1})$ for triples $(X_{0}, z_{0}, \varphi_{0})$ is determined by the constraints: $\varphi_0 \geq w_0 + \mathbb E[\Phi_1]$, $X_0 + z_0 \in \mathcal A_0$, and $w_0 \in \Re$.
The information state for period $t=0$ follows similarly.

\paragraph*{Step three: Attainment.}
We have $\Phi_{T+1} \geq 0$ since $\Phi_{T+1}$ is an indicator function. For period $T$, we have $\Phi_T(h_T) \geq \mathbb E[\Phi_{T+1} \vert h_T] \geq 0$ for all $h_T \in \mathbb{H}_T$. By induction, for period $t \in [1, T]$ suppose $\Phi_{t+1} \geq 0$, then we have $\Phi_t(h_t) \geq \mathbb E[\Phi_{t+1} \vert h_t] \geq 0$ for all $h_t \in \mathbb{H}_t$.  For each period $t \in [T]$, we must have
$$
Z_t(h_t) \geq \min_{z \in \Re} \{z : [X_t \vert h_t] + z \in \mathcal A_t\} \geq \min_{z \in \Re} \{z : \bar{X}_t + z \in \mathcal A_t\},\, \forall h_t \in \mathbb{H}_t,
$$
where the RHS is independent of $h_t$ (as it uses the maximum payoff $\bar{X}_t$). It follows that $Z_{0:T}$ is lower bounded.
For period $t = 0$, we have the lower bound $w_0 \geq - \sum_{t=0}^T \bar X_t$ or else it is not possible to satisfy the terminal wealth constraint $W_{T+1} \geq 0$ (even if we receive the maximum payoff in every period).

Now suppose $\varphi_0 \leq \tau$. Since $\mathbb E[\Phi_1] \geq 0$, we have $w_0 \leq \tau$ and so $w_0$ is upper bounded.
For each $t \in [1, T]$, we have $\mathbb E[\Phi_{t+1} \vert h_t] \leq \Phi_t(h_t)$ for all $h_t \in \mathbb{H}_t$. Taking unconditional expectations gives $\mathbb E[\Phi_{t+1}] \leq \mathbb E[\Phi_t]$ for all $t \in [T]$, and so $\mathbb E[\Phi_t] \leq \tau$ for all $t \in [T]$. It then follows by Lemma~\ref{lem:upper_bound} that $\Phi_{0:T}$ is upper bounded.
Since we have $\mathbb E[\Phi_1] \leq \tau - w_0 < \infty$, $W_{T+1} \geq 0$ must hold almost surely (or else $\mathbb E[\Phi_1] = \infty$).
Since $W_{T+1} \geq 0$, we cannot disburse more in any period than our initial endowment plus total maximum payoff.
It then follows that
$$
Z_t(h_t) \leq w_0 + \sum_{k=0}^T \bar X_k \leq \tau + \sum_{k=0}^T \bar X_k,\, \forall h_t \in \mathbb{H}_t,\, t \in [T],
$$
and so $Z_{0:T}$ is upper bounded.



\subsubsection{Consumption}

\paragraph*{Step one: Formulate risk frontier.}
The risk frontier for $\varrho_{{\rm C}}$ is determined by the constraints: 
\begin{subequations}
\label{eq:frontier_consumption}
    \begin{align}
        \Phi_t(h_t) \geq & \mathbb E[\psi_t(W_t, X_t, Z_t) + \Phi_{t+1} \vert h_t],\, \forall h_t \in \mathbb{H}_t,\, t \in [T],\\ \Phi_{T+1} \equiv & 0,\\
        Z_{0:T} \geq & 0,\\
        W_{t+1} = & \gamma(W_t, X_t, Z_t),\, \forall t \in [T-1].
    \end{align}
\end{subequations}
This risk frontier is non-convex in its current form due to the nonlinear wealth state equation $\gamma$, but it has a convex reformulation through the introduction of dummy shortfall variables as in \cite{pflug2001risk,pflug2005measuring}. Let $M_t \in \mathcal L_{t+1}$ where $M_t \sim \max \{Z_t - W_t - X_t, 0 \}$ is a dummy variable for the period $t \in [T]$ consumption shortfall. Then, we can reformulate Eq.~\eqref{eq:frontier_consumption} as:
\begin{subequations}
\label{eq:frontier_consumption-1}
    \begin{align}
        \Phi_t(h_t) \geq & \mathbb E[\alpha_t Z_t - c_t M_t + \Phi_{t+1} \vert h_t],\, \forall h_t \in \mathbb{H}_t,\, t \in [T],\\ \Phi_{T+1} \equiv & 0,\\
        Z_{0:T} \geq & 0,\\
        W_{t+1} = & W_t + X_t - Z_t + M_t,\, \forall t \in [T-1],\\
        M_{0:T} \geq & 0.
    \end{align}
\end{subequations}
Eq.~\eqref{eq:frontier_consumption-1} only consists of linear constraints, and so is convex.

\paragraph*{Step two: Information state.}
The tail risk frontier $\mathcal U_{t}(h_t, \Phi_{t+1})$ for triples $(X_{t}, z_{t}, \varphi_{t})$ is determined by the constraints:
$$
\varphi_t \geq \mathbb E[\psi_t(W_t, X_t, Z_t) + \Phi_{t+1} \vert h_t],\, z_t \geq 0.
$$
For each period $t \in [T]$, suppose we are on history $h_t \in \mathbb{H}_t$ and we are given $\bar{\Phi}_{t+1} \in \bar{\mathcal L}_{t+1}$.
Then, acceptable $(X_t, z_t, \varphi_t)$ must satisfy:
\begin{equation*}
\varphi_t \geq \mathbb E[\psi_t(w_t, X_t(s_t, \xi_t), z_t + \bar{\Phi}_{t+1}(g_t(y_t, z_t, \xi_t))],\, z_t \geq 0,
\end{equation*}
which only depend on $y_t$.

\paragraph*{Step three: Attainment.}
First we have $\Phi_t(h_t) \geq \mathbb E[\sum_{k=t}^T \psi_k(W_k, X_k, Z_k) \vert h_t]$ for all $h_t \in \mathbb{H}_t$ and $t \in [T]$.
The initial wealth $w_0$ is fixed, and initial wealth plus total payoff is upper bounded by $w_0 + \sum_{t=0}^T \bar X_t$. Then in periods $[t, T]$, the total utility from consumption is upper bounded by $(\max_{t \in [T]} \alpha_t)(w_0 + \sum_{t=0}^T \bar X_t)$, as we cannot consume more than initial wealth plus maximum total payoff.
We then have the lower bounds
$$
\Phi_t(h_t) \geq \mathbb E \left[\sum_{k=t}^T \psi_k(W_k, X_k, Z_k) \vert h_t \right] \geq - \left( \max_{t \in [T]} \alpha_t \right) \left( w_0 + \sum_{t=0}^T \bar X_t \right)
$$
for all $t \in [T]$, where there is no shortfall cost.

Now suppose $\varphi_0 \leq \tau$. Taking unconditional expectations, we have the upper bounds $\mathbb E[\Phi_{t+1}] \leq \tau - \mathbb E[\sum_{k=0}^t \psi_k(W_k, X_k, Z_k)]$ for all $t \in [T]$. Using the fact that $\mathbb E[\sum_{k=0}^t \psi_k(W_k, X_k, Z_k)]$ is lower bounded, we have
$$
\mathbb E[\Phi_{t}] \leq \tau + \left( \max_{t \in [T]} \alpha_t \right) \left( w_0 + \sum_{t=0}^T \bar X_t \right),\, \forall t \in [T].
$$
It follows that $\Phi_{0:T}$ is upper bounded by Lemma~\ref{lem:upper_bound}. 
Next, we have that $Z_{0:T} \geq 0$ are automatically lower bounded.
To conclude, we have $\mathbb E[\sum_{t=0}^T \psi_t(W_t, X_t, Z_t)] \leq \varphi_0 \leq \tau$.
For any $t \in [T]$ and fixed $w_t$, we have $\lim_{z_t \rightarrow \infty} \mathbb E[\psi_t(w_t, X_t, z_t)] = \infty$. It follows that $Z_{0:T}$ is also upper bounded.

\subsubsection{Consumption excess}

\paragraph*{Step one: Formulate risk frontier.}
The risk frontier for $\varrho_{{\rm CE}}$ is determined by the constraints:
\begin{subequations}
\label{eq:frontier_excess}
    \begin{align}
        \Phi_t(h_t) \geq & \alpha_t + \max_{\xi_t \in \Xi} \Phi_{t+1}(h_t, \xi_t),\, \forall h_t \in \mathbb{H}_t,\, t \in [T],\\
        \Phi_{T+1} \equiv & 0,\\
        \psi_t(\alpha_t, [X_t \vert h_t], Z_t(h_t), s_t) \geq & 0,\, \forall h_t \in \mathbb{H}_t,\, t \in [T],\\
        W_{t+1} = & \gamma(W_t, Z_t),\, \forall t \in [T],\\
        W_{T+1} \geq & 0,\\
        \alpha_t \geq & \epsilon,\, \forall t \in [T].
    \end{align}
\end{subequations}
The functions $(\alpha_t, z_t) \rightarrow \psi_t(\alpha_t, X_t, z_t, s_t)$ are concave for fixed $X_t$ as the perspective of the concave function $u$, for fixed $s_t \in \mathbb{S}$ for all $t \in [T]$.
We can write the wealth update as an inequality to obtain the risk frontier in convex form.

For period $T$, we have $\Phi_T(h_T) \geq \epsilon$ for all $h_T \in \mathbb{H}_T$ since $\alpha_T \geq \epsilon$ and $\Phi_{T+1} \equiv 0$. By induction, for period $t \in [T]$ we have $\Phi_t(h_t) \geq \epsilon$ for all $h_t \in \mathbb{H}_t$ since $\alpha_t \geq \epsilon$ and $\Phi_{t+1} \geq 0$ by the induction step.
It follows that $\Phi_{0:T}$ is lower bounded, and $\alpha_{0:T}$ is automatically lower bounded since $\alpha_t \geq \epsilon$ for all $t \in [T]$.

\paragraph*{Step two: Information state.}
The period $T$ tail risk frontier $\mathcal U_T(h_t, \Phi_{T+1})$ for triples $(X_T, z_T, \varphi_T)$ is determined by the constraints:
$$
\varphi_T \geq \alpha_T, \psi_T(\alpha_T, [X_T \vert h_T], z_T; s_T) \geq 0, \gamma_T(w_T, z_T) \geq 0, \alpha_T \geq \epsilon.
$$
The period $t \in [T-1]$ tail risk frontier $\mathcal U_{t}(h_t, \Phi_{t+1})$ for triples $(X_{t}, z_{t}, \varphi_{t})$ is determined by the constraints:
$$
\varphi_t \geq \alpha_t + \max_{\xi_t \in \Xi} \Phi_{t+1}(h_t, \xi_t), \psi_t(\alpha_t, [X_t \vert h_t], z_t; s_t) \geq 0, \alpha_t \geq \epsilon.
$$

For each period $t \in [T]$, suppose we are on history $h_t \in \mathbb{H}_t$ and we are given $\bar{\Phi}_{t+1} \in \bar{\mathcal L}_{t+1}$. Then, acceptable $(X_t, z_t, \varphi_t)$ must satisfy:
\begin{equation*}
    \psi_t(\alpha_t, [X_t \vert h_t], z_t; s_t) \geq 0,\, \varphi_t \geq \alpha_t + \max_{\xi_t \in \Xi} \bar{\Phi}_{t+1}(g_t(y_t, z_t, \xi_t)),\, \alpha_t \geq \epsilon,
\end{equation*}
which only depend on $y_t$.

\paragraph*{Step three: Attainment.}
Now suppose $\varphi_0 \leq \tau$.
Then $\alpha_{0:T}$ is immediately upper bounded since $\sum_{t=0}^T \alpha_t \leq \varphi_0 \leq \tau$.
For all $h_t \in \mathbb{H}_t$ and $t \in [T]$, we also have
$$
\Phi_t(h_t) \geq \Phi_{t+1}(h_t, \xi_t, f_t(s_t, \xi_t)),\, \forall \xi_t \in \Xi.
$$
Then, it follows that $\Phi_t(h_t) \leq \tau$ for all $t \in [T]$ and so $\Phi_{0:T}$ is upper bounded.

Next, we see that $Z_t$ is lower bounded by feasibility of the consumption excess constraints via:
\begin{align*}
& \min_{z_t \in \Re, \alpha_t \geq \epsilon} \{ z_t : \alpha_t \mathbb E[u((X_t + z_t - \zeta_t(s_t)) / \alpha_t) \vert s_t] \geq 0 \}\\
\geq & \min_{z_t \in \Re, \alpha_t \geq \epsilon} \{z_t : \alpha_t \mathbb E[u((\bar{X}_t + z_t - \min_{s_t \in \mathbb S} \zeta_t(s_t)) / \alpha_t)] \geq 0\},
\end{align*}
where $\min_{s_t \in \mathbb S} \zeta_t(s_t)$ is the smallest period $t$ consumption target.
Let $\bar X = \max_{t \in [T]} \bar{X}_t$ be the maximum of the payoff upper bounds $\bar{X}_t$.
Then, given initial wealth $w_0$, total cumulative wealth is upper bounded by $(w_0 + \bar X) \sum_{t=0}^T(1+\beta^S)^{t}$ which accounts for the savings return rate (it assumes all cash is invested and there are no withdrawals).
Since $W_{T+1} \geq 0$ must hold, we must have
$$
Z_t(h_t) \leq (w_0 + \bar X) \sum_{t=0}^T(1+\beta^S)^{t},\, \forall h_t \in \mathbb{H}_t,\, t \in [T],
$$
i.e., withdrawals cannot exceed an upper bound on cumulative wealth.
It follows that $Z_{0:T}$ is also upper bounded.

\subsection{Target-based}

\subsubsection{Expected utility}

\paragraph*{Step one: Formulate risk frontier.}
The risk frontier for $\varrho_{{\rm EU}}$ is determined by the constraints:
\begin{subequations}
\label{eq:frontier_utility}
    \begin{align}
        \Phi_t(h_t) \geq & \mathbb E[\Phi_{t+1} \vert h_t],\, \forall h_t \in \mathbb{H}_t,\, t \in [T],\\
        \Phi_{T+1}(h_{T+1}) = & u \left( \sum_{t=0}^T Z_t(h_{T+1}) \right),\, \forall h_{T+1} \in \mathbb{H}_{T+1},\\
        [X_t \vert h_t] + Z_t(h_t) \in & \mathcal A_t,\, \forall h_t \in \mathbb{H}_t,\, t \in [T],\\
        Z_{0:T} \geq & 0.
    \end{align}
\end{subequations}
%

\paragraph*{Step two: Information state.}
The period $t$ tail risk frontier $\mathcal U_{t}(h_t, \Phi_{t+1})$ for triples $(X_{t}, z_{t}, \varphi_{t})$ is determined by the constraints: $\varphi_t \geq \mathbb E[\Phi_{t+1} \vert h_t]$; $[X_t \vert h_t] + z_t \in \mathcal A_t$; and $z_t \geq 0$.
For each period $t \in [T]$, suppose we are on history $h_t \in \mathbb{H}_t$ and we are given $\bar{\Phi}_{t+1} \in \bar{\mathcal L}_{t+1}$.
Then, acceptable $(X_t, z_t, \varphi_t)$ must satisfy:
\begin{align*}
\varphi_t \geq \mathbb E[\bar{\Phi}_{t+1}(g_t(y_t, z_t, \xi_t))], [X_t \vert s_t] + z_t \in \mathcal A_t,\, z_t \geq 0,
\end{align*}
which only depend on $y_t$.

\paragraph*{Step three: Attainment.}
First $Z_{0:T} \geq 0$ is automatically lower bounded.
Then, it follows that $\Phi_{T+1} \geq u(0)$ since $\sum_{t=0}^T Z_t \geq 0$ and $u$ is non-decreasing.
For earlier periods $t \in [T]$, we have $\Phi_t(h_t) \geq \mathbb E[\Phi_{t+1} \vert h_t] \geq u(0)$ for all $h_t \in \mathbb{H}_t$. It follows that all $\Phi_t(h_t)$ are lower bounded.

Now suppose $\varphi_0 \leq \tau$.
Taking unconditional expectations gives $\mathbb E[\Phi_t] \geq \mathbb E[\Phi_{t+1}]$ for all $t \in [T]$.
It follows that $\mathbb E[\Phi_t] \leq \tau$ for all $t \in [T]$, and so $\Phi_{0:T}$ is upper bounded by Lemma~\ref{lem:upper_bound}.
For any $h_t \in \mathbb{H}_t$, we have $\mathbb E [ u (\sum_{t=0}^T Z_t) ] \geq \mathbb P(h_t) u(Z_t(h_t))$
for $\mathbb P(h_t) > 0$. We then have $\mathbb P(h_t) u(Z_t(h_t)) \leq \tau$ which implies $u(Z_t(h_t)) \leq \tau/(\min_{h_t \in \mathbb{H}_t} \mathbb P(h_t))$. Since $u$ is increasing, it follows that $Z_{0:T}$ is upper bounded.

\subsubsection{Worst-case expectation}

\paragraph*{Step one: Formulate risk frontier.}
Feasible adversary strategies $M_{0:T} \in \mathcal Q$ must satisfy:
\begin{subequations}
\label{eq:frontier_robust_adversary}
\begin{align}
    M_t(h_t) & \in \Delta_t(\eta_t(h_t)),\, \forall h_t \in \mathbb{H}_t,\, t \in [T],\\
    \eta_{t+1}(h_t, \xi_t) & = \eta_t(h_t) - d(M_t(h_t) P_t, P_t),\, \forall h_t \in \mathbb{H}_t,\, \xi_t \in \Xi,\, t \in [T].
\end{align}
\end{subequations}
By definition, $\varrho_{{\rm R}}$ can be reformulated as:
\begin{align*}
    \varrho_{{\rm R}}(X_{0:T}) = \min_{Z_{0:T}, \vartheta \in \Re} \quad & \vartheta\\
    \text{s.t.} \quad & \vartheta \geq \mathbb E_{M_{0:T}} \left[ \sum_{t=0}^T Z_t \right],\, \forall M_{0:T} \in \mathcal{Q},\\
    & X_{0:T} + Z_{0:T} \in \mathcal A_{\times},\\ & Z_{0:T} \in \mathcal{C}_{\geq 0}.
\end{align*}
For any fixed $M_{0:T} \in \mathcal Q$, we define the corresponding system $\mathcal{U}(M_{0:T})$:
\begin{subequations}
\label{eq:frontier_robust}
\begin{align}
    \Phi_t(h_t) \geq & Z_t(h_t) + \mathbb E[\Phi_{t+1}(h_t, \xi_t) [M_t(h_t)](\xi_t)],\, \forall h_t \in \mathbb{H}_t,\, t \in [T],\\
    \Phi_{T+1} \equiv & 0,\\
    [X_{t} \vert h_t] + Z_{t}(h_t) \in & \mathcal A_{t},\, \forall h_t \in \mathbb{H}_t,\, t \in [T],\\
    Z_{0:T} \geq & 0.
\end{align}
\end{subequations}
The risk frontier for $\varrho_{{\rm R}}$ then satisfies:
$$
\mathcal U = \bigcap_{M_{0:T} \in \mathcal Q} \mathcal{U}(M_{0:T}).
$$

\paragraph*{Step two: Information state.} 
The period $t \in [T]$ tail risk frontier $\mathcal U_{t}(h_t, \Phi_{t+1})$ for triples $(X_{t}, z_{t}, \varphi_{t})$ is determined as follows.
In period $t \in [T]$, for any fixed $m_{t} \in \Delta_{t}(\eta_t(h_t))$ we define the system $\mathcal U_{t}(h_t, \Phi_{t+1}, m_{t})$:
\begin{align*}
\varphi_t \geq & z_t + \mathbb E[\Phi_{t+1}(h_t, \xi_t)m_t(\xi_t)],\\
[X_t \vert h_t] + z_t \in & \mathcal A_t,\, z_t \geq 0.
\end{align*}
Then, we have
$$
\mathcal U_{t}(h_t, \Phi_{t+1}) = \bigcap_{m_{t} \in \Delta_t(\eta_t(h_t))} \mathcal U_{t}(h_t, \Phi_{t+1}, m_{t}).
$$

For each period $t \in [T]$, suppose we are on history $h_t \in \mathbb{H}_t$ and we are given $\bar{\Phi}_{t+1} \in \bar{\mathcal L}_{t+1}$.
Then, acceptable $(X_t, z_t, \varphi_t)$ must satisfy the conditions:
\begin{align*}
[X_t \vert s_t] + z_t \in & \mathcal A_t,\\
\varphi_t \geq & z_t + \mathbb E[\bar{\Phi}_{t+1}(g_t(s_t, \xi_t, \eta_t - d(m_t P_t, P_t)))],\, \forall m_t \in \Delta_t(\eta_t),\\
z_t \geq & 0,
\end{align*}
which only depend on $y_t$.

\paragraph*{Step three: Attainment.}
We have $Z_{0:T} \geq 0$ and so $Z_{0:T}$ is automatically lower bounded. Since $Z_{0:T} \geq 0$, we have $\Phi_t(h_t) \geq 0$ for all $h_t \in \mathbb{H}_t$ and $t \in [T]$ by induction.

Now suppose $\varphi_0 \leq \tau$. Eq.~\eqref{eq:frontier_robust} must be feasible for all $M_{0:T} \in \mathcal Q$.
In particular, we can take $M_t \equiv 1$ for all $h_t \in \mathbb{H}_t$ and $t \in [T]$ which belongs to $\mathcal Q$ (and corresponds to no perturbation in each period) to obtain the system:
\begin{subequations}
\label{eq:frontier_robust-2}
\begin{align}
    \Phi_t(h_t) \geq & Z_t(h_t) + \mathbb E[\Phi_{t+1} \vert h_t],\, \forall h_t \in \mathbb{H}_t,\, t \in [T],\\
    \Phi_{T+1} \equiv & 0,\\
    [X_{t} \vert h_t] + Z_{t}(h_t) \in & \mathcal A_{t},\, \forall h_t \in \mathbb{H}_t,\, t \in [T],\\
    Z_{0:T} \geq & 0.
\end{align}
\end{subequations}
Taking unconditional expectations in Eq.~\eqref{eq:frontier_robust-2} gives $\mathbb E[\Phi_t] \geq \mathbb E[\Phi_{t+1}]$ for all $t \in [T]$. It then follows that $\Phi_{0:T}$ is upper bounded, and consequently $Z_{0:T}$ is upper bounded.

\subsubsection{Conditional value-at-risk}

\paragraph*{Step one: Formulate risk frontier.} 
CVaR is equivalent to a worst-case expectation ${\rm CVaR}_{\alpha} ( \sum_{t=0}^T Z_t ) = \sup_{M_{0:T} \in \mathcal Q} \mathbb E_{M_{0:T}} [ \sum_{t=0}^T Z_t ]$ for appropriate $\mathcal Q$.
The set of feasible adversary strategies $M_{0:T} \in \mathcal Q$ explicitly satisfies:
\begin{subequations}
\label{eq:frontier_CVaR_adversary}
\begin{align}
    M_t(h_t) & \in \Delta_t(\eta_t(h_t)),\, \forall h_t \in \mathbb{H}_t,\, t \in [T],\\
    \eta_{t+1}(h_t, \xi_t) & = \eta_t(h_t)/[M_t(h_t)](\xi_t),\, \forall h_t \in \mathbb{H}_t,\, \forall \xi_t \in \Xi, t \in [T].
\end{align}
\end{subequations}
Then, $\varrho_{{\rm CVaR}}$ can be reformulated as:
\begin{align*}
    \varrho_{{\rm CVaR}}(X_{0:T}) = \min_{Z_{0:T}, \vartheta \in \Re} \quad & \vartheta\\
    \text{s.t.} \quad & \vartheta \geq \mathbb E_{M_{0:T}} \left[ \sum_{t=0}^T Z_t \right],\, \forall M_{0:T} \in \mathcal{Q},\\
    & X_{0:T} + Z_{0:T} \in \mathcal A_{\times},\\ & Z_{0:T} \in \mathcal{C}_{\geq 0}.
\end{align*}
For any fixed $M_{0:T} \in \mathcal Q$, we define the system $\mathcal U(M_{0:T})$:
\begin{subequations}
\label{eq:frontier_CVaR}
\begin{align}
    \Phi_t(h_t) \geq & Z_t(h_t) + \mathbb E \left[\Phi_{t+1} M_t \vert h_t \right],\, \forall h_t \in \mathbb{H}_t,\, t \in [T],\\
    \Phi_{T+1} \equiv & 0,\\
    [X_{t} \vert h_t] + Z_{t}(h_t) \in & \mathcal A_{t},\, \forall h_t \in \mathbb{H}_t,\, t \in [T],\\
    Z_{0:T} \geq & 0.
\end{align}
\end{subequations}
Then, the risk frontier for $\varrho_{{\rm CVaR}}$ satisfies:
$$
\mathcal U = \bigcap_{m_{0:T} \in \mathcal Q} \mathcal{U}(M_{0:T}).
$$

\paragraph*{Step two: Information state.}
The period $t \in [T]$ tail risk frontier $\mathcal U_{t}(h_t, \Phi_{t+1})$ for triples $(X_{t}, z_{t}, \varphi_{t})$ is determined as follows.
Given $m_{t} \in \Delta(\eta_t(h_t))$, we define $\mathcal U_{t}(h_t, \Phi_{t+1}, m_t)$ via:
\begin{align*}
\varphi_t \geq & z_t + \mathbb E \left[\Phi_{t+1}m_t \vert h_t \right],\\
[X_t \vert s_t] + z_t \in & \mathcal A_t,\, z_t \geq 0.
\end{align*}
Then the tail risk frontier satisfies
$$
\mathcal U_{t}(h_t, \Phi_{t+1}) = \bigcap_{m_{t} \in \Delta(\eta_t(h_t))} \mathcal U_{t}(h_t, \Phi_{t+1}, m_{t}).
$$

For each period $t \in [T]$, suppose we are on history $h_t \in \mathbb{H}_t$ and we are given $\bar{\Phi}_{t+1} \in \bar{\mathcal L}_{t+1}$.
Then, acceptable $(X_t, z_t, \varphi_t)$ must satisfy:
\begin{align*}
    \varphi_t \geq & z_t + \mathbb E \left[\bar{\Phi}_{t+1}(g_t(s_t, \xi_t, \eta_t/m_t(\xi_t)))m_t(\xi_t) \right],\, \forall m_t \in \Delta_t(\eta_t),\\
    [X_t \vert s_t] + z_t \in & \mathcal A_t,\\
    z_t \geq & 0,
\end{align*}
which only depend on $y_t$.

\paragraph*{Step three: Attainment.}
Now we verify Assumption~\ref{assu:compact}. We have $Z_{0:T} \geq 0$ and so $Z_{0:T}$ is automatically lower bounded. Since $\Phi_{T+1} \equiv 0$ and $Z_{0:T} \geq 0$, we have $\Phi_t(h_t) \geq 0$ for all $h_t \in \mathbb{H}_t$ and $t \in [T]$. It follows that $\Phi_{0:T}$ is lower bounded.

Now suppose $\varphi_0 \leq \tau$. We must have that Eq.~\eqref{eq:frontier_CVaR} is feasible for any $M_{0:T} \in \mathcal Q$. In particular, we see that $M_{0:T}$ with $M_t \equiv 1$ for all $h_t \in \mathbb{H}_t$ and $t \in [T]$ belongs to $\mathcal Q$. We then obtain:
\begin{subequations}
\label{eq:frontier_CVaR-1}
\begin{align}
    \Phi_t(h_t) \geq & Z_t(h_t) + \mathbb E \left[\Phi_{t+1} \vert h_t \right],\, \forall h_t \in \mathbb{H}_t,\, t \in [T],\\
    \Phi_{T+1} \equiv & 0,\\
    [X_{t} \vert h_t] + Z_{t}(h_t) \in & \mathcal A_{t},\, \forall h_t \in \mathbb{H}_t,\, t \in [T],\\
    Z_{0:T} \geq & 0.
\end{align}
\end{subequations}
Taking unconditional expectations gives $\mathbb E[\Phi_t] \geq \mathbb E[\Phi_{t+1}]$ for all $t \in [T]$. It then follows that $\Phi_{0:T}$ is upper bounded, and consequently $Z_{0:T}$ is upper bounded.


\subsubsection{Quantile risk measure}

\paragraph*{Step one: Formulate risk frontier.}
Following \cite{li2017quantile} and treating $-Z_t$ as the period $t$ reward, the risk frontier for $\varrho_{{\rm Q}}$ is determined by the constraints:
\begin{subequations}
\label{eq:frontier_quantile}
\begin{align}
    \Phi_t(h_t) \geq & Z_t(h_t) + \max_{\{\xi_t \in \Xi,\, [M_t(h_t)](\xi_t) \ne 1\}} \left\{ \Phi_{t+1}(h_t, \xi_t) \right\},\, \forall t \in [T],\\
    \Phi_{T+1} \equiv & 0,\\
    [X_{t} \vert h_t] + Z_{t}(h_t) \in & \mathcal A_{t},\, \forall h_t \in \mathbb{H}_t,\, t \in [T],\\
    Z_{0:T} \geq & 0,\\
    M_t(h_t) \in & \Delta_t(\eta_t(h_t)), \forall h_t \in \mathbb{H}_t,\, t \in [T],\\
    \eta_{t+1}(h_t, \xi_t) = & [M_t(h_t)](\xi_t), \forall h_t \in \mathbb{H}_t,\, t \in [T].
\end{align}
\end{subequations}
Notice that the strategy $M_{0:T} \in \mathcal Q$ is now a ``primal'' variable in Eq.~\eqref{eq:frontier_quantile} and is part of DM's optimization (rather than a ``dual'' adversarial strategy as in, e.g., worst-case expectation).

\paragraph*{Step two: Information state.}
The period $t \in [T]$ tail risk frontier $\mathcal U_{t}(h_t, \Phi_{t+1})$ for triples $(X_{t}, z_{t}, \varphi_{t})$ is determined by the constraints:
\begin{align*}
\varphi_t \leq & z_t + \min_{\{\xi_t \in \Xi,\, m_t(\xi_t) \ne 1\}} \left\{ \Phi_{t+1}(h_t, \xi_t) \right\},\\
m_t \in & \Delta_t(\eta_t),\\
[X_t \vert s_t] + z_t \in & \mathcal A_t,\\
z_t \geq & 0.
\end{align*}

For each period $t \in [T]$, suppose we are on history $h_t \in \mathbb{H}_t$ and we are given $\bar{\Phi}_{t+1} \in \bar{\mathcal L}_{t+1}$.
Then, acceptable $(X_t, z_t, \varphi_t)$ must satisfy:
\begin{align*}
\Phi_t(h_t) \leq & z_t + \min_{\{\xi_t \in \Xi,\, m_t(\xi_t) \ne 1\}} \left\{ \bar{\Phi}_{t+1}(g_t(s_t, m_t(\xi_t), \xi_t)) \right\},\\
m_t \in & \Delta_t(\eta_t),\\
[X_t \vert s_t] + z_t \in & \mathcal A_t,
\end{align*}
which only depends on $y_t$.

\paragraph*{Step three: Attainment.}
We have $Z_{0:T} \geq 0$ so $Z_{0:T}$ is automatically lower bounded. Since $\Phi_{T+1} \equiv 0$ and $Z_{0:T} \geq 0$, by induction we have $\Phi_t(h_t) \geq 0$ for all $h_t \in \mathbb{H}_t$ and $t \in [T]$. The sequence $M_{0:T}$ is automatically lower and upper bounded (i.e., we have $M_{0:T} \geq 0$ and $M_t \leq 1$ for all $t \in [T]$). Similarly, $\eta_{0:T}$ is automatically lower and upper bounded.

There is a discontinuity in the premium constraint of Eq.~\eqref{eq:frontier_quantile}, so we cannot establish an upper bound by the same methods.

\subsubsection{Growth profile}

\paragraph*{Step one: Formulate risk frontier.}
The risk frontier for $\varrho_{{\rm G}}$ is determined by the constraints:
\begin{subequations}
\label{eq:frontier_growth}
\begin{align}
    \Phi_t(h_t) \geq & \max_{\xi_t \in \Xi} \Phi_{t+1}(h_t, \xi_t),\, \forall h_t \in \mathbb{H}_t,\, t \in [T],\\
    \Phi_{t+1}(h_{T+1}) = & \max\{Z_0(h_{T+1}), \ldots, Z_T(h_{T+1})\},\, \forall h_{T+1} \in \mathbb{H}_{T+1},\\
    [W_{t+1} \vert h_t] + Z_{t}(h_t) \in & \mathcal A_t(W_{t}(h_t)),\, \forall h_t \in \mathbb{H}_t,\, t \in [T],\\
    W_{t+1} = & W_{t} + X_t,\, \forall t \in [T],\\
    Z_{0:T} \geq & 0.
\end{align}
\end{subequations}

\paragraph*{Step two: Information state.}
The period $t \in [T]$ tail risk frontier $\mathcal U_{t}(h_t, \Phi_{t+1})$ for triples $(X_{t}, z_{t}, \varphi_{t})$ is determined by the constraints:
$$
\varphi_t \geq \max_{\xi_t \in \Xi} \Phi_{t+1}(h_t, \xi_t), w_t + X_t + z_t \in \mathcal A_t(w_t), z_t \geq 0.
$$

For each period $t \in [T]$, suppose we are on history $h_t \in \mathbb{H}_t$ and we are given $\bar{\Phi}_{t+1} \in \bar{\mathcal L}_{t+1}$.
Then, acceptable $(X_t, z_t, \varphi_t)$ must satisfy:
\begin{equation*}
    \varphi_t \geq \mathbb E[\bar{\Phi}_{t+1}(g_t(y_t, z_t, \xi_t))], w_{t} + [X_t \vert s_t] + z_t \in \mathcal A_t(w_t),\, z_t \geq 0,
\end{equation*}
which only depend on $y_t$.

\paragraph*{Step three: Attainment.}
Since $Z_{0:T} \geq 0$, we automatically have that $Z_{0:T}$ is lower bounded and so $\Phi_{t+1} \geq 0$ as well.
We then have $\Phi_t(h_t) \geq \max_{\xi_t \in \Xi} \Phi_{t+1}(h_t, \xi_t)$, and so $\Phi_t(h_t) \geq 0$ for all $h_t \in \mathbb{H}_t$ and $t \in [T]$ by induction.

Now suppose $\varphi_0 \leq \tau$.
The period $t = 0$ premium satisfies $\varphi_0 \geq \max\{Z_0, \ldots, Z_T\}$ (where the maximum is almost sure). Then, we have $\max\{Z_0, \ldots, Z_T\} \leq \tau$ and $Z_t(h_t) \leq \tau$ for all $h_t \in \mathbb{H}_t$ and $t \in [T]$, and so $Z_{0:T}$ is upper bounded. Since $\Phi_t(h_t) \geq \Phi_{t+1}(h_t, \xi_t)$ for all $\xi_t \in \Xi$, we have $\Phi_t(h_t) \leq \tau$ for all $h_t \in \mathbb{H}_t$ and $t \in [T]$ by induction, and so $\Phi_{0:T}$ is upper bounded.

\section{Additional Material for Section~\ref{sec:simulations}}

\subsection{Newsvendor GCR Models}

We let $V_{T+1} \equiv 0$ be the terminal value function in all of the following DP decompositions.

\paragraph*{Standard capital requirement.}
The transition functions on the information state are $g_{t}(y_t, a_t, z_t, \xi_t) = (f_t(s_t, a_t, \xi_t), \gamma(w_t - z_t))$ for all $t \in [T]$.
The DP decomposition for Eq.~\eqref{newsvendor_standard} is:
\begin{subequations}
\label{newsvendor:standard-DP}
\begin{align}
    V_0(s_0, 0) = & \min_{w_0 \in \mathbb W, a_0 \in \mathbb A, z_0 \geq 0} \left\{ w_0 + \mathbb E[V_1(g_0(y_0, a_0, z_0, \xi_0))] : R_0(s_0,a_0) + z_0 \in \mathcal A_0 \right\},\, \forall s_0 \in \mathbb{S},\\
    V_t(s_t, w_t) = & \min_{a_t \in \mathbb A, z_t \geq 0} \left\{ \mathbb E[V_{t+1} (g_t(y_t, a_t, z_t, \xi_t))] : R_t(s_t,a_t) + z_t \in \mathcal A_t \right\},\\
    & \forall y_t \in \mathbb Y, \forall t \in [1, T-1],\\
    V_T(s_T, w_T) = & \min_{a_T \in \mathbb A, z_T \geq 0} \left\{ - \beta \mathbb E[\min\{W_{T+1}, 0\}] : R_T(s_T, a_T) + z_T \in \mathcal A_T,\, w_T + \mathbb E[R_T(s_T, a_T)] - z_T \geq 0 \right\},\\
    & \forall y_T \in \mathbb Y.
\end{align}
\end{subequations}
\noindent
Note that there is only an endowment decision $w_0$ in period $t = 0$. In period $T$, the objective penalizes negative terminal wealth.

\paragraph*{Wealth reserve.}
The transition functions on the information state are
$$
g_{t}(y_t, a_t, \xi_t) = (f_t(s_t, a_t, \xi_t), \gamma(w_t + r_t(s_t, a_t, \xi_t))),\, \forall t \in [T-1].
$$
The DP decomposition for Eq.~\eqref{newsvendor_wealth} is:
\begin{align}
    V_t(s_t, w_t) = & \min_{a_t \in \mathbb A, z_t \geq 0} \left\{ z_t + \mathbb E[V_{t+1} (g_{t}(y_t, a_t, \xi_t))] : R_t(s_t, a_t) + z_t \in \mathcal A_t(w_t) \right\},\\
    & \forall y_t \in \mathbb Y,\,\forall t \in [T]. \nonumber
\end{align}

\paragraph*{Cash orders only.}
The transition functions on the information state are
$$
g_{t}(y_t, a_t, \xi_t) = (f_t(s_t, a_t, \xi_t), \gamma(w_t + r_t(s_t, a_t, \xi_t))),\, \forall t \in [T-1].
$$
The DP decomposition for Eq.~\eqref{newsvendor_order} is then:
\begin{equation}
    V_t(s_t, w_t) = \max_{a_t \in \mathbb A} \left\{ \mathbb E[R_t(s_t, a_t) + V_{t+1} (g_{t}(y_t, a_t, \xi_t))] : a_t \in \mathcal A_t(w_t) \right\}, \forall y_t \in \mathbb Y, \forall t \in [T].
\end{equation}

\paragraph*{Risk-neutral.}

The DP decomposition for Eq.~\eqref{Newsvendor_classical} is:
\begin{subequations}
\begin{align}
    V_t(s_t) = & \max_{a_t \in \mathbb A} \mathbb E[R_t(s_t, a_t) + V_{t+1} (f_{t}(s_t, a_t, \xi_t))], \forall s_t \in \mathbb{S}, \forall t \in [T-1],\\
    V_T(s_T) = & \max_{a_T \in \mathbb A} \mathbb E[R_T(s_T, a_T)], \forall s_T \in \mathbb{S}.
\end{align}
\end{subequations}

\paragraph*{Nested risk measure.}

The DP decomposition for Eq.~\eqref{Newsvendor_nested} is:
\begin{subequations}
\begin{align}
    V_t(s_t) = & \max_{a_t \in \mathbb A} \mathbb \rho_t(R_t(s_t, a_t) + V_{t+1} (f_{t}(s_t, a_t, \xi_t))), \forall s_t \in \mathbb{S}, \forall t \in [T-1],\\
    V_T(s_T) = & \max_{a_T \in \mathbb A} \mathbb \rho_T(R_T(s_T, a_T)), \forall s_T \in \mathbb{S}.
\end{align}
\end{subequations}
To evaluate the one-step risk measures $\{\rho_t\}_{t \in [T]}$, we use the variational form of CVaR for payoffs for $\alpha \in (0, 1)$:
$$
{\rm CVaR}_{\alpha}(X) = \max_{\eta \in \Re} \left\{\eta - \frac{1}{\alpha} \sum_{x} \pr{X=x} \max\{\eta - x, 0\} \right\}.
$$


\end{document}